\def\namedlabel#1#2{\begingroup #2 \def\@currentlabel{\textnormal{#2}}
  \phantomsection\label{#1}\endgroup}
\def\blfootnote{\gdef\@thefnmark{}\@footnotetext}
\titleformat*{\section}{\centering \large\bfseries\scshape}
\titleformat*{\subsection}{\normalsize \bfseries}
\titleformat{\subsubsection}[runin]{\itshape}{
  \thesubsubsection}{1em}{}
\newtheorem{prop}{Proposition}
\newtheorem{theorem}{Theorem}
\newtheorem{lemma}{Lemma}
\newtheorem{definition}{Definition}
\newenvironment{keywords}{\textbf{Keywords: }}{}
\newenvironment{msc}{\textbf{MSC 2020: }}{}
\newenvironment{acknowledgements}{
  \textbf{Acknowledgements. }}{}
\newcommand{\ttfrac}[2]{\scaleobj{.83}{
    \frac{\scaleobj{1}{#1}}{\scaleobj{1}{#2}}}}
\newcommand{\msum}[1]{\displaystyle \scaleobj{.98}{\,
    \sum_{\scaleobj{.9}{#1}}}} %
\newcommand{\mmsum}[2]{\displaystyle \scaleobj{.98}{\,
    \sum_{\scaleobj{.9}{#1}}^{\scaleobj{.9}{#2}}}}
\newcommand{\eint}[2]{\scaleobj{0.66}{
    \mint_{\scaleobj{1.5}{#1}}^{\scaleobj{1.5}{#2}}}}
\newcommand{\eiint}[1]{\scaleobj{0.7}{ \miint_{\scaleobj{1.3}{#1}}}}
\newcommand{\texp}[1]{\hspace{-.5pt} \scaleobj{0.87}{{#1}}}
\newcommand{\ttexp}[1]{\hspace{-.5pt} \scaleobj{0.75}{{#1}}}
\newcommand{\XRindex}{\hspace{-.5pt} \scaleobj{0.85}{X_{\! R}}}
\newcommand{\tNormIndex}[1]{\hspace{-.5pt} \scaleobj{0.65}{{#1}}}
\newcommand{\NormIndex}[1]{\hspace{-.5pt} \scaleobj{0.87}{{#1}}}
\newcommand{\Norm}[2]{\lN {#1} \rN_{\NormIndex{{#2}}}}
\newcommand{\tNorm}[2]{\lN {#1} \rN_{\tNormIndex{{#2}}}}
\newcommand{\bigNorm}[2]{\big\lVert {#1} \big\rVert_{\NormIndex{{#2}}}}
\newcommand{\BigNorm}[2]{\Big\lVert {#1} \Big\rVert_{\NormIndex{{#2}}}}
\newcommand{\tCinterval}[2]{\scaleobj{0.72}{[{#1}, {#2}]}}
\newcommand{\lc}{\left\lbrack}
\newcommand{\rc}{\right\rbrack}
\newcommand{\lp}{\left(}
\def\rp{\right)}
\newcommand{\lb}{\left\{}
\newcommand{\rb}{\right\}}
\newcommand{\lN}{\left\lVert}
\newcommand{\rN}{\right\rVert}
\newcommand{\la}{\langle}
\newcommand{\ra}{\rangle}
\newcommand{\R}{\mathbb{R}}
\newcommand{\N}{\mathbb{N}}
\newcommand{\ep}{\varepsilon}
\newcommand{\nae}{\textrm{~a.e.~}}
\newcommand{\ie}{\textrm{~i.e.~}}
\newcommand{\st}{\textrm{~s.t.~}}
\newcommand{\mint}{\medint{\int}}
\newcommand{\miint}{\medint{\iint}}
\newcommand{\p}{\partial}
\newcommand{\mdvx}{\tfrac{\dd }{\dd{x}}}
\newcommand{\mpdvt}{\tfrac{\p }{\p t}}
\newcommand{\mpdvx}{\tfrac{\p }{\p x}}
\newcommand{\sC}{\mathscr{C}}
\newcommand{\cV}{\mathcal{V}}
\newcommand{\cS}{\mathcal{S}}
\newcommand{\cC}{\mathcal{C}}
\newcommand{\cG}{\mathcal{G}}
\newcommand{\cX}{\mathcal{X}}
\newcommand{\cB}{\mathcal{B}}
\newcommand{\cP}{\mathcal{P}}
\newcommand{\cI}{\mathcal{I}}
\newcommand{\cF}{\mathcal{F}}
\DeclareMathOperator*{\infess}{ess\,inf\,}
\DeclareMathOperator{\e}{e}
\DeclareMathOperator{\sgn}{sgn}
\DeclareMathOperator{\supp}{supp}
\title{Growth-fragmentation model for a population presenting
  heterogeneity in growth rate: Malthus parameter and long-time
  behavior}
\date{June 2, 2023}
\author{ Ana\"{i}s Rat\footnote{Aix Marseille Univ, CNRS, I2M,
    Centrale Marseille, Marseille, France.}$^{~,\hspace{-4pt}}$
  \footnote{Sorbonne Université, Inria, CNRS, Université de Paris,
    Laboratoire Jacques-Louis Lions, 4 place Jussieu, 75005 Paris, France.}
  \and Magali Tournus\footnotemark[1]}
\begin{document}

\maketitle

\blfootnote{\hspace{-7pt} Email:
  \href{mailto:anais.rat@centrale-marseille.fr}{anais.rat@centrale-marseille.fr},
  \href{mailto:mtournus@math.cnrs.fr}{mtournus@math.cnrs.fr}.}

\begin{abstract}
  The goal of the present paper is to explore the long-time behavior
  of the growth-fragmentation equation formulated in the case of equal
  mitosis and variability in growth rate, under fairly general
  assumptions on the coefficients.
  The first results concern the monotonicity of the Malthus parameter
  with respect to the coefficients.
  Existence of a solution to the associated eigenproblem is then
  stated in the case of a finite set of growth rates thanks to
  the Kre\u{\i}n-Rutman theorem and a series of estimates on
  the moments.
  Providing additionally enough mixing in the 
  population expressed in terms of irreducibility of the transition
  kernel, uniqueness of the eigenelements and asynchronous exponential
  growth are stated through entropy methods.
  Notably, convergence in shape to the steady state holds in the case
  of individual exponential growth where, in the absence of
  variability, the solution is known to exhibit oscillations at large
  times.
  We eventually perform a few numerical approximations to illustrate
  our results and discuss our mixing condition.
\end{abstract}

\begin{keywords} Growth-fragmentation equation, structured population,
  heterogeneity in growth rate, eigenproblem, Malthus parameter,
  long-time behavior, general relative entropy, Kre\u{\i}n-Rutman's
  theorem.
\end{keywords}

\begin{msc}
  35B40, 35Q92, 45C05, 45K05, 47A75, 92D25.
\end{msc}


\section{Introduction}

Among the class of structured population models,
\emph{growth-fragmentation equations} have raised throughout the last
decades a wide literature.
Their first formulation traces back to 1967 by three independent
groups of biophysicists, Bell and Anderson~\cite{bell_cell_1967},
Sinko and Streifer~\cite{sinko_new_1967}, and Fredrickson, Ramkrishna
and Tsuchiya~\cite{fredrickson_statistics_1967}.
Their need was to add/sub- stitute size, or several other
physiological variables~\cite{fredrickson_statistics_1967}, to the age
\smash{structuring~variable of} the already known renewal equation in
order to account for and learn from the finer~structure of the data
made available by technical progress (notably electronic \emph{Coulter
  counters} allowing for accurate cell counting and
sizing~\cite{gregg_electrical_1965, anderson_cell_1967}).
Yet, mathematical tools were insufficient to tackle the problem other
than numerically and it was not until the 1980s and the development of
semi-group theory 
that the first theoretical results were
obtained~\cite{diekmann_stability_1984, greiner_growth_1988,
  heijmans_stable_1984}.
Since then, growth-fragmentation equations
have been extensively studied under weaker assumptions
thanks to new tools from both deterministic and probabilistic
approaches.
Such interest is due to the variety of the domains of application and
to the mathematical complexity and richness of the questions raised
(well-posedness, long-time asymptotic behavior and coefficient
estimation through inverse problem, for the most classical ones).
For further information on growth-fragmentation equations we refer to
the book~\cite{metz_dynamics_1986} of Metz and Diekmann (1986) or the
review papers~\cite{arino_survey_1995, gyllenberg_mathematical_2007}
of Arino (1995) and Gyllenberg (2007).

\subsection{The classical growth-fragmentation model}

We consider a population of cells that grow in size according to some
\emph{deterministic growth rate} $\tau$ and divide with a certain
size-dependent \emph{probability per unit of time} $\gamma$. We assume
a constant environment so that coefficients $\tau$ and $\gamma$ only
depend on the individual variables, usually called \emph{structuring
  variables}.

In this subsection, the only structuring variable is the \emph{size}
$x$. We call it size although~$x$ can be volume, length, dry mass,
protein content, etc. as long as it is conserved through division.
Finally, we assume \emph{equal mitosis} considering that dividing
cells split into two cells of equal size. We obtain the classical
\emph{growth-fragmentation} equation
\begin{equation*} \label{pb:GFt}
  \tag*{\textnormal{(GF$_{\! t}$)}} \left\{
    \begin{aligned}
      & \mpdvt n(t,x)+ \mpdvx \big( \tau(x) n(t,x) \big)
        + \gamma(x) n(t,x) = 4 \gamma(2x)n(t,2x),\\
      & \tau(0) n(t,0) = 0, \qquad n(0,x) = n^{in}(x),
    \end{aligned}
  \right.\
\end{equation*}
where~$n(t,x)$ stands for the density of cells of size $x>0$ at time
$t\geq 0$. The $4$ in factor of the right-hand term (which stands for
new-born cells) is the product of two factors $2$ that arise from
getting \emph{two} cells with birth size in $(x,x+\dd{x})$ out of the
division of one cell with size in the interval $(2x, 2x + 2 \dd x)$
\emph{twice} as big.

Although non-conservative, this \emph{population balance equation}
comes from the combination of different conservation laws:
integrating~\ref{pb:GFt} against $1$ and $x$ respectively, one
gets~that
\begin{itemize}[leftmargin=.7cm,parsep=0cm,itemsep=0.1cm,topsep=0cm]
\item the \emph{total number of cells} is left unchanged by growth but
  increased by fragmentation
  \begin{equation*}
    \frac{\dd}{\dd{t}} \int_{0}^\infty n(t,x) \dd{x}
    = \int_{0}^\infty \gamma(x)n(t,x) \dd{x},
  \end{equation*}
\item and conversely, fragmentation conserves the \emph{total mass},
  besides increased by growth
  \begin{equation*}
    \frac{\dd}{\dd{t}} \int_{0}^\infty x n(t,x) \dd{x}
    = \int_{0}^{\infty} \tau(x) n(t,x) \dd{x}.
  \end{equation*}
\end{itemize}

A remarkable feature of many reproducing populations observed before
crowding or resource limitation occurs, therefore expected to be
captured by the model, is exponential growth coupled with
asynchronicity --or \emph{asynchronous exponential growth
  (A.E.G.)}~\cite{webb_operator-theoretic_1987}.

\emph{Asynchrony} is the property of a system to forget the shape of
its initial distribution at large times and asymptotically stabilize
in the sense that the proportion of individuals in a given
\emph{cohort} --a sub-population sharing same traits-- becomes
constant as time progresses.
Biologists also refer to it as the \emph{desynchronization
  effect}~\cite{chiorino_desynchronization_2001} since no matter how
synchronized the initial population is, i.e. how narrowly distributed
(Dirac distribution included, in the case of \emph{clonal
  populations}), it progressively desynchronizes as it aligns with
some stable distribution.

Mathematically, the A.E.G. corresponds to the existence of a
\emph{stationary profile} $N$, independent of the initial state, and
positive constants $C$, $\lambda$ such that:
\begin{equation*}
  n(t,x)\underset{t \rightarrow + \infty}{\sim} C \e^{\lambda t}N(x),
\end{equation*}
where the only memory of the initial state is a weighted average
contained in $C$. The asymptotic exponential growth rate $\lambda$ is
called the \emph{Malthus parameter} or sometimes \emph{fitness}.

We refer to Mischler and Scher's article~\cite{mischler_spectral_2016}
for a study and review of the long-time behavior.
A good balance between the growth and fragmentation rates is required
to ensure the A.E.G: if fragmentation dominates growth around size
zero the density goes to a Dirac distribution at size
zero~\cite{doumic_time_2016} and conversely, it dilutes to infinity if
growth dominates fragmentation at large sizes,
see~\cite{doumic_jauffret_eigenelements_2010} for some examples of
such non-existence of a steady~profile.

Another situation where the A.E.G fails, more surprisingly although
already anticipated in 1967 by Bell and
Anderson~\cite{bell_cell_1967}, is the case of ideal bacterial growth:
equal mitosis and linear growth rate $\tau (x) =vx$ for some positive
$v$ (or any growth rate s.t.  $2 \tau(x) = \tau(2x)$).
In this setting indeed, because the equation lacks dissipativity, it
loses its regularizing effect and keeps stronger memory of the initial
state. We still have exponential growth but convergence in shape
towards a time-periodic limit preserving the (otherwise vanishing)
singular part of measure solution. For instance, starting from a Dirac
mass in $x$, the distribution at any larger time $t$ is a sum of Dirac
masses supported on a subset of
$\{x \e^{v t} 2^{-n} \}_{n \in \N}$~\cite{gabriel_periodic_2022}.
An intuitive explanation is that under individual exponential growth,
cells with same size, no matter when they respectively divide, give
birth to cells of same size.
The property of having the same size is thus passed on through
generations together with the properties of the initial
population~\cite{bell_cell_1967, diekmann_stability_1984}.
See~\cite{greiner_growth_1988} for the first proof of convergence
(1988), most notably improved
in~\cite{brunt_cell_2018,doumic_explicit_2018,bernard_cyclic_2019},
or~\cite{gabriel_periodic_2022} for the latest results and a nice
review.

In his 1995 survey~\cite{arino_survey_1995}, Arino lists the
modifications of this limit case that have been made to re-establish
{the} A.E.G. With no surprise, it concerns one of the two
conditions --equal mitosis or $\tau(2x)=2\tau(x)$-- and somehow
consists in adding variability: allowing for
\begin{itemize}[leftmargin=.7cm,parsep=0cm,itemsep=0.1cm,topsep=0cm]
\item different sizes at birth, that is
  \emph{asymmetric}/\emph{unequal division}, one daughter cell
  inheriting a fraction $r \neq \frac{1}{2}$ of her mother's mass, the
  other one the remaining $1-r$, see Heijmans
  (1984)~\cite{heijmans_stable_1984} for the first proof,
\item different growth rates, subdividing the population into
  \emph{quiescent} non-growing (tumor) cells and \emph{proliferating}
  growing cells~\cite{gyllenberg_nonlinear_1990},
\end{itemize}
or, he suggests, even both combined which has only been studied very
recently by Cloez, de Saporta and Roget~\cite{cloez_long-time_2021}.

The first option has been well studied, especially through the larger
model of \emph{general fragmentation} that accounts for stochastic
mother/daughter size ratios at
division~\cite{bernard_asynchronous_2020, mischler_spectral_2016}, but
few studies have focused on other forms of variability. In the present
paper, motivated by biological evidence
(see~\cite{kiviet_stochasticity_2014, gangwe_nana_division-based_2018}
and the numerous references therein), we investigate the second
option: variability in growth rate within the size-structured setting,
more suited to cell division than the age-structured one (at least for
\emph{E. Coli} as indicated in~\cite{robert_division_2014}).

\subsection{A model accounting for cell-to-cell variability in growth
  rate}

Up to our knowledge, no theoretical study on the asymptotics of a size
structured model encompassing more than the two ways of growing
mentioned above is available.
A model {allowing a} continuous set of growth rates was formulated
through piecewise deterministic Markov branching tree by Doumic,
Hoffmann, Krell {and} Robert~\cite{doumic_statistical_2015} in order to
estimate the division rate more accurately.
The model was then used by Olivier~\cite{olivier_how_2017} to quantify
the variation of the Malthus parameter with respect to variability in
the aging and growth rates of age and size-structured populations,
respectively. Theoretical results are obtained in a deterministic
framework for the age-structured model, but again, no theoretical
study is carried out in the size-structured case, she relies on the
stochastic approach of Doumic et al. to get numerical results later
detailed in Subsection~\ref{ssec:monotonicity_V}.

To account for such cell-to-cell variability one needs to describe
cells, by their size still, but also by a new structuring variable: an
individual \emph{feature} or \emph{trait} $v$, attributed at birth and
conserved all life long, that determines cells' own growth rates
\smash{$\tau(v,\cdot)$}. It remains to define a rule for the
transmission of this feature over generations. Denoting by $\cV$ the
\emph{set of admissible features}, we introduce a kernel $\kappa$, the
\emph{variability kernel}, supported on $\cV \times \cV$ such that
{$\kappa(v, \dd v')$ prescribes the distribution of the features
  transmitted by cells of feature~$v$};
in particular it must {satisfy}
\begin{equation*}
  \int_\cV \kappa(v, \dd v') = 1, \quad \forall v \in \cV.
\end{equation*}

Within this new setting, the density $n(t,v,x)$ of cells of feature
$v \in \cV$ and size $x>0$ at time $t \geq 0$ evolves as
\begin{equation*}\label{pb:GFtv}
  \tag*{\textnormal{(GF$_{\! t,v}$)}}
  \left\{
    \begin{aligned}
      & \mpdvt n(t,v,x)+ \mpdvx \bigl( \tau(v,x) n(t,v,x) \bigr)
      = \cF (n ) (t,v,x),\\
      & \tau(v',0) n(t,v',0) = 0, \qquad n(0,v,x) = n^{in} (v,x),
    \end{aligned}
  \right.
\end{equation*}
with the fragmentation operator $\cF$ acting on a function $f$ through
\begin{equation*}
  \cF f (v,x) \coloneqq - \gamma(v,x) n(t,v,x) + 4 \mint_{\cV}
  \gamma(v',2x)n(t,v',2x) \kappa(v',v) \dd{v'}.
\end{equation*}

Let us clarify why the division rate depends on $v$ although
variability is only assumed on growth.
When assuming division to be only triggered by size, we actually aim
at defining a \emph{division rate per unit of size}, say $\beta$,
function of the size only such that the {probability $\beta(x)\dd x$,
  to divide before reaching size $x+ \dd x$ when having reached size
  $x$}, is common to all cells. A straightforward dimensional analysis
however indicates that the division rate $\gamma$ of the equation is a
\emph{rate per unit of time}.  In {the} absence of variability in
growth rate, $\gamma(x)\dd t$ is thus the probability for any cell
reaching size $x$ to divide in at most $\dd t$. Relating $\gamma$ and
$\beta$ is then easy given cells' growth rate: any cell of size $x$ at
time $t$, therefore instantaneously growing at speed $\tau(x)$, will
take a time \smash{$\dd{t} = \frac{\dd{x}}{\tau(x)}$} at most to grow
of $\dd x$ at most, {\Cref{fig:division_rate}}. This imposes the
natural relation $\gamma(x) \coloneqq \tau(x)\beta(x)$, as described
for example in~\cite{hall_steady_1991, diekmann_growth_1983}.
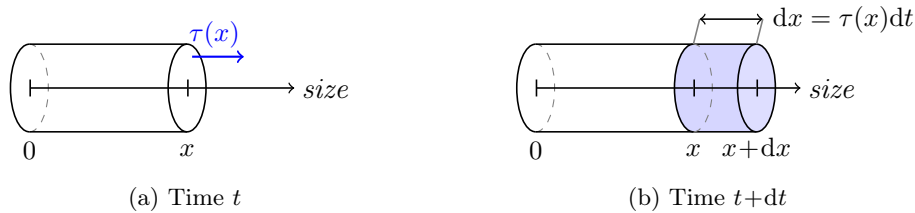
\begin{figure}[h!]
  \centering
  \begin{subfigure}{0.45\textwidth}
    \centering
    \begin{tikzpicture}[scale=.83]
      \draw[dashed,color=gray] (0,-.7) arc (-90:90:0.3 and .7); %
      \draw[semithick] (0,-.7) arc (270:90:0.3 and .7); 
      \draw[semithick] (0,-.7) -- (2.5,-.7); 
      \draw[semithick] (0,1.4-.7) -- (2.5,1.4-.7); 
      \draw[semithick] (2.5,0) ellipse (0.3 and 0.7); 
      \draw (4.7,0) node {\textit{size}}; %
      \draw[|-,semithick] (0,0) -- (2.5,0); %
      \draw[|->,semithick] (2.5,0) -- (4.2,0); %
      \draw (0,-1) node {\small$0$}; %
      \draw (2.5,-1) node {\small $x$}; %
      \draw[->,thick,color=blue] (2.58,.5) -- (3.4,.5); %
      \draw (3.5,.85) node[color=blue, left] {\small $\tau(x)$}; %
      \draw (3.5,-.96) node {\small \vphantom{$x \! +\! \dd{x}$}};
    \end{tikzpicture}
    \caption{Time $t$}
  \end{subfigure}
  \begin{subfigure}{0.45\textwidth}
    \centering \hspace{.2cm}
    \begin{tikzpicture}[scale=.83]
      \draw[dashed,color=gray] (0,-.7) arc (-90:90:0.3 and .7); %
      \fill[blue!16] (2.5,0) ellipse (0.3 and 0.7); 
      \fill[blue!16] (2.5,-.7) -- (3.5,-.7) -- (3.5,.7) -- (2.5,.7) --
      cycle; %
      \draw[semithick] (0,-.7) arc (270:90:0.3 and .7); 
      \draw[semithick] (0,-.7) -- (3.5,-.7); 
      \draw[semithick] (0,.7) -- (3.5,.7); 
      \draw[dashed,color=gray] (2.5,-.7) arc (-90:90:0.3 and .7); %
      \draw[semithick] (2.5,-.7) arc (270:90:0.3 and .7); 
      \draw[semithick,fill=blue!13] (3.5,0) ellipse (0.3 and 0.7); %
      \draw (4.7,0) node {\textit{size}}; %
      \draw[|-,semithick] (0,0) -- (2.5,0); \draw[|-,semithick]
      (2.5,0) -- (3.5,0); %
      \draw[|->,semithick] (3.5,0) -- (4.2,0); %
      \draw (0,-1) node {\small$0$}; %
      \draw (2.5,-1) node {\small$x$}; %
      \draw (3.5,-.96) node {\small $x \! +\! \dd x$}; %
      \draw[<->,semithick] (2.6,1.1) -- (3.6,1.1); %
      \draw[semithick,color=gray] (2.5,.7) -- (2.6,1.1); %
      \draw[semithick,color=gray] (3.5,.7) -- (3.6,1.1); %
      \draw (3.6,1.1) node[right] {\small $\dd{x} = \tau(x) \dd t$};
    \end{tikzpicture}
    \caption{Time $t \! +\! \dd{t}$}
  \end{subfigure}
  \caption{Scheme of the growth mechanism of a cell with growth rate
    $\tau = \tau(x)$.}
  \label{fig:division_rate}
\end{figure}

Returning to the model with variability, $\gamma$ should {depend} on
$v$ as well but only through~$\tau$, as a consequence of the relation
\smash{$\gamma \coloneqq \tau \beta$} ensuring
\smash{$\gamma(v, x) \dd t = \beta(x) \dd x$}. So one should not be
mistaken: only growth is subjected to variability, division keeps
being triggered by size only, in the sense that $\beta=\beta(x)$.
Note that this modeling assumption is made for the size/size-increment
equation (or equivalent) in both
biological~\cite{taheri-araghi_cell-size_2015} and
mathematical~\cite{hall_steady_1991, gabriel_steady_2019,
  doumic_estimating_2020} papers, and mimicked by
Oliver~\cite{olivier_how_2017} for the size-structured model with
variability.

It is clear that model~\ref{pb:GFt} can be formally obtained from
model~\ref{pb:GFtv} by
\begin{itemize}[leftmargin=.7cm,parsep=0cm,itemsep=0.1cm,topsep=0cm]
\item choosing an initial state composed of cells of a unique given
  feature $\Bar{v}$,
\item and imposing to every new-born cell to have feature $\Bar{v}$ by
  setting $\kappa(v, \cdot) = \delta_{\Bar{v}}$ for all $v$,
\end{itemize}  
after which the support of the distribution of features among cells,
initially reduced to $v=\Bar{v}$, will conserve through time since
there is no transport in feature. This way, we have
$n(t,\dd v,x) = \tilde{n}(t,x) \delta_{\Bar{v}}(\dd v)$ and finding a
solution $\tilde{n}$ to~model~\ref{pb:GFt} is equivalent to
finding a solution $n$ to this \enquote{reset}
model~\ref{pb:GFtv}.

Studying the long-time asymptotic behavior of~\ref{pb:GFtv}
requires to study its eigenproblem, whose solution can bee seen as a
stationary solution to the problem on $n \e^{\lambda t}$ (solution
to~\ref{pb:GFtv} rescaled by the exponential rate inherent to
its unrestricted growth):
\begin{equation*}
  \label{pb:GFv}
  \tag*{\textnormal{(GF$_{\! v}$)}}
  \left\{
    \begin{aligned}
      &\mpdvx \big( \tau N \big) + \lambda N = \cF N, \qquad
        \tau(\cdot,0)N(\cdot,0) = 0, \quad
        \mint_{\cV} \! \mint_0^\infty N = 1,
      \quad N \geq 0,\\
      &-\tau\mpdvx \phi + \lambda \phi = \cF^* \phi, \qquad
      \mint_{\cV}\! \mint_0^\infty N \phi = 1, \qquad \phi \geq 0,
    \end{aligned}
  \right.
\end{equation*}
with $\lambda$ the Malthus parameter previously mentioned and $\cF^*$
the dual operator of~$\cF$:
\begin{equation}\label{eq:SV_adjoint_opp}
  \cF^* h  (v,x) \coloneqq -\gamma (v,x) h (v,x) + 2\gamma(v,x)
  \mint_{\cV} \kappa(v,v') h \Big( v',\mfrac{x}{2} \Big) \dd{v'}.
\end{equation}

Note that the direct equation is defined in the weak sense: \emph{weak
  solutions} to the direct problem of~\ref{pb:GFv} are functions
$N \in L^1 ( \cV \times \R_+)$ such that for all
$\varphi \in \sC^\infty_c \big( \cV \times [0, +\infty) \big)$
\begin{multline*}
  \mint_{\cV} \! \mint_0^\infty \! N(v,x) \Big( - \tau(v,x) \mpdvx
  \varphi(v,x) + \big(\lambda + \gamma(v,x) \big) \varphi(v,x) \Big)
  \dd x \dd v \\
  = 4 \mint_{\cV} \! \mint_0^\infty \!  \Big( \mint_{\cV}\gamma(v',2x)
  N(v', 2x) \kappa(v',v) \dd{v'} \Big)\varphi(v,x) \, \dd v \dd x = 0.
\end{multline*}
As for the adjoint equation, it should hold a.e. for
$\phi~\in~L_1 \bigl(\cV; W_{loc}^{1,\infty} \lp 0, + \infty \rp
\bigr)$.

\subsection{Malthus parameter}

Characterizing the variation of the Malthus parameter $\lambda$ with
respect to (w.r.t.) a variation in the coefficients of the problem
is a question of both biological and mathematical interest. In the
context of adaptive dynamics, it arises as soon as one seeks to
question the optimality of the coefficients in the Darwinian sense of
maximizing the growth speed of the population --for example to explain
observations~\cite{michel_optimal_2006} or {to} optimize
therapeutic strategy like
cancer~\cite{clairambault_circadian_2006,billy_synchronisation_2014}
or antibiotic treatments~\cite{balaban_bacterial_2004}.
Such concerns also come to mind for anyone that wishes to gain insight
into the mathematical model, through a deeper understanding of the
interplay between the coefficients and the eigenvalue
{$\lambda$}~\cite{calvez_self-similarity_2012}. Though, relatively
few mathematical studies have addressed it (see
Subsection~\ref{ssec:monotonicity} for a brief review).

What interests us in the following, is the influence of monotonous
variations of the growth or division rate. A natural question is
whether or not the resulting variation of $\lambda$ is~monotonous. One
could think for example, that increasing the division rate should
increase the instantaneous number of newborn cells and in turn, the
exponential growth rate $\lambda$ of the overall population. No matter
how intuitive this can be it is not necessarily true as proved
in~\cite{calvez_self-similarity_2012}.

In this article, Calvez, Doumic and Gabriel, multiply by a factor
$\alpha$ the growth rate (or the division rate but the problem is
proved equivalent) and study the limit of the corresponding solution
$\lambda_\alpha$ when $\alpha$ goes to zero or infinity.  At the
limit, the problem becomes pure fragmentation, or pure transport
respectively: the mass tends to a Dirac {distribution at} zero (or
goes to infinity resp.) so, at the end, the only quantity that matters
to the asymptotic exponential growth speed of the population is the
value of the division rate at infinitely small (large resp.)
sizes. This formal reasoning is summarized by their {Theorem~1:}
$\lim_{\alpha \rightarrow L} \lambda_\alpha = \lim_{x \rightarrow L}
\gamma(x)$, for $L$ equal to $0$ or $+\infty$, which especially
implies the possible non-monotonicity of the Malthus parameter (for
example if $\gamma$, non-monotonous, cancels in zero and infinity).

The question then, which still remains open, is to find necessary and
sufficient conditions on the coefficients for the Malthus parameter to
depend {on them} in a monotonous way.

\subsection{Outline of the paper}

Our objective is to bring insight in the effect of variability between
individuals on the dynamics of the whole population: can such
cell-level heterogeneity be responsible for any population-level
strategy? in other words, is the population with variability growing
faster? for which families of growth rates, which transmission laws?
are, among others, related biological questions that we would like to
answer. From a mathematical point of view, how does variability
impacts the asymptotic behavior of the system? does it introduce
enough mixing between individuals to recover the A.E.G. property in
the case of individual exponential growth?
The main results of the present paper are the following:
\begin{itemize}[leftmargin=.7cm, parsep=0cm,itemsep=0.1cm,topsep=0cm]
  
\item \Cref{cor:monotonicity} (no variability) tells us that the
  modeling assumption $\gamma \coloneqq \tau \beta$ is sufficient to
  ensure monotonicity of the Malthus parameter with respect to the
  growth rate,
  
\item \Cref{thm:monotonicity_V} (variability) provides bounds for the
  Malthus parameter $\lambda$ solution to~\ref{pb:GFv}. The
  lower (upper) bound is the Malthus parameter $\lambda_1$
  ($\lambda_2$ resp.) associated with the problem with no variability
  and any growth rate $\tau_1 \leq \tau(v, \cdot)$ (resp.
  $\tau(v, \cdot) \leq \tau_2$),
  
\item \Cref{thm:existence_eig} states existence of eigenelements
  $(\lambda, N ,\phi)$ solution to~\ref{pb:GFv},
  
\item \Cref{prop:GRE_principle} ensures that~\ref{pb:GFtv}
  verifies a GRE inequality,
  
\item from which we get uniqueness of eigenelements
  (\Cref{thm:uniqueness_eig}) and long-time convergence of {the}
  renormalized solutions to~\ref{pb:GFtv} to the steady state
  $N$ in $L^1( \phi)$-norm (\Cref{thm:conv}). Proofs strongly rely on
  the GRE principle and the localization of the support of the
  eigenvectors (\Cref{prop:positivity}). An interesting consequence is
  that providing sufficient mixing in feature, encoded in a condition
  of irreducibility on the variability kernel $\kappa$, convergence to
  a steady profile holds in the special case of linear growth rate.
\end{itemize}

The paper is organized as follows. \Cref{sec:main_results} exposes our
set of assumptions and main results. Proofs are given all along the
section, except the longer proof of~\Cref{thm:existence_eig},
postponed to \Cref{sec:proof}.
\Cref{sec:illustration} illustrates the monotonicity and convergence
results in the case of linear growth rate, through theoretical and
numerical study of simple cases. Numerical approximations also allow
to study weaker mixing conditions than irreducibility.

\section{Main results}
\label{sec:main_results}

All along the paper, we adopt the following notation: 
\begin{equation*}
  \cS = \cV \times (0, +\infty),
\end{equation*}
introduce the space of functions $L^p$-integrable on a neighborhood of
zero
\begin{equation*}
  L_0^ p \coloneqq \bigl\{ f : \exists a>0, \;f \in L^p(0,a) \bigr\},
  \qquad 1 \leq p \leq +\infty,
\end{equation*}
and the set of non-negative functions with at most
polynomial growth or decay at infinity:
\begin{equation*}
  \cP_\infty \coloneqq \Big\{ f \geq 0, f \in
  L^\infty_{loc}(0, +\infty) 
  : \exists \nu, \omega \geq 0,
  ~~ \limsup_{x \rightarrow + \infty } \bigl( x^{-\nu} f(x) \bigr) < \infty,
  ~~ \liminf_{x \rightarrow + \infty } \bigl( x^{\omega} f(x) \bigr) >0 \Big\}.
\end{equation*}

\subsection{Assumptions}

We formulate the following hypotheses:
\begin{description}
\item[\namedlabel{as:V}{(H$\cV_{\mathscr{D}}$)}] The \emph{set of
    features} $\cV$ is finite:
  $\exists M \in \N^* : \cV = \{v_1,v_2, \dots, v_M\}$.
  
  To lighten notations we introduce $\, \cI \coloneqq \{1,\ldots, M\}$
  and use indifferently the notations~$f(v_i, \cdot)$ or $f_i$, for
  any $i$ in $\cI$, whether one considers $f$ as a map on $\cV$ to
  some functional space $X$ or as a vector in $X^M$. In particular we
  write equivalently \smash{$\ell^1\big(\cI; L^1(0, + \infty) \big)$}
  and $L^1(\cS)$.

\item[\namedlabel{as:tau}{(H$\tau$)}] The \emph{growth rate} $\tau$
  satisfies: there exist $\nu_0, \omega_0 \geq 0$, s.t. for all $v$ in
  $\cV$,
  \begin{description}[leftmargin=-.5cm, parsep=0cm, topsep=0cm]
  \item[\namedlabel{as:tau_positive}{(H$\tau_{_{pos}}$)}]
    $\forall K \subset (0, +\infty)$ compact,
    $\exists m_K >0 : \tau(v,\cdot) \geq m_K \,$ a.e. on
    $K$
    
  \item[\namedlabel{as:tau_in_0}{(H$\tau_{_0}$)}]
    \smash{$\frac{x^{\nu_0}}{\tau(v,\cdot)} \in L_0^1$} and
    \smash{$x^{\omega_0} \tau(v, \cdot) \in L_{0}^\infty$}

  \item[\namedlabel{as:tau_in_inf}{(H$\tau_{\infty}$)}]
    $\tau(v, \cdot) \in \cP_\infty$
  \end{description}

\item[\namedlabel{as:gamma}{(H$\gamma$)}] The \emph{division rate per
    unit of time} $\gamma$ satisfies: for all $v$ in $\cV$,
  $\gamma(v, \cdot) \in \cP_\infty$.
  
\item[\namedlabel{as:tau-gamma}{(H$\gamma$\textnormal{-}$\tau$)}]
  \vspace{.1cm} There exists a function $\beta$, depending only on the
  size variable and satisfying~\ref{as:beta}, such that
  $\gamma(v, x) = \beta (x) \tau(v,x)$,
  $~\forall v \in \cV, ~\nae x \in (0, +\infty)$.

\item[\namedlabel{as:beta}{(H$\beta$)}] The fragmentation/growth ratio
  $\beta$, or \emph{division rate per unit of size}, satisfies
  \begin{description}[leftmargin=1cm, parsep=0cm, topsep=0cm]
    
  \item[\namedlabel{as:beta_support}{(H$\beta_{\supp}$)}]
    $\beta$ is supported on $[b, + \infty)$ for some $b \geq 0$

  \item[\namedlabel{as:beta_L0}{(H$\beta_{_0}$)}]
    \smash{$\beta \in L^1_0$}
   
  \item[\namedlabel{as:beta_in_inf}{(H$\beta_{\infty}$)}]
    \smash{$\lim\limits_{x \rightarrow + \infty} x \beta(x)=+\infty$}
  \end{description}

\item[\namedlabel{as:kappa}{(H$\kappa$)}] Under~\ref{as:V}, the
  \emph{variability kernel} $\kappa : \cV \times \cV \rightarrow \R_+$
  can be seen as a square matrix of size~$M$, in which case
  $\kappa = (\kappa_{ij}) \in \mathcal{M}_{\texp{M}}(\R_+)$ satisfies
  \begin{description}[leftmargin=1cm, parsep=0cm, topsep=0cm]
  \item[\namedlabel{as:kappa_cv}{(H$\kappa_{prob}$)}]
    $\kappa$ is a \emph{stochastic (or probability or transition)
      matrix}:
    \begin{equation*}
      \kappa_{ij} \geq 0,~~\forall (i,j) \in \cI^2,
      \qquad \; \mmsum{j=1}{M} \kappa_{ij}= 1,~~\forall i \in \cI.
    \end{equation*} 

  \item[\namedlabel{as:kappa_irr}{(H$\kappa_{irr}$)}]
    $\kappa$ is \emph{irreducible}:
    $\forall (i,j) \in \cI^2, ~ \exists m \in \N : \kappa^{(m)}_{ij}
    \coloneqq ( \kappa^m )_{ij} > 0.$
  \end{description}

\item[\namedlabel{as:tau_hetero}{(H$\tau_{var}$)}]
  $\forall (v',v) \in \supp(\kappa) : v' \neq v \; \,
  {\Longrightarrow} \; \; 2 \tau(v,x) \neq \tau(v',2x) \;\nae x \in
  \big( \tfrac{b}{2}, +\infty \big)$.
\end{description}

A few remarks on these assumptions:
\begin{itemize}[leftmargin=0cm, wide=0cm, topsep=0cm]
\item \textit{Set of features.} Taking a finite set of
  features~\ref{as:V} is sufficient to model most biological
  situations where variability manifests as clear physiological
  differences between distinct subpopulations (see
  e.g.~\cite{cloez_long-time_2021} which accounts for the difference
  in growth rates between the \emph{old pole} and \emph{new pole
    cells}, or~\cite{balaban_bacterial_2004} which studies
  \emph{persistent} cells, resistant to antibiotic treatments but
  having reduced growth rate contrary to the rest of the population).
  Likewise, one can resume from the continuous vision of variability,
  of the type of~\cite{doumic_statistical_2015, olivier_how_2017}
  \begin{description} \centering
  \item[\namedlabel{as:V_continuous}{(H$\cV_{\sC}$)}] ~~$\cV$ is a
    compact interval of $(0,+\infty)$,
  \end{description}
  to the discrete one, simply by partitioning $\cV$ in characteristic
  intervals $\cV_i$ and considering any cell with feature in $\cV_i$
  as a cell with feature $\bar{v}_i$, a certain average on~$\cV_i$.

\item \textit{Growth and fragmentation rates}.
  Assumption~\ref{as:beta_L0} together with \ref{as:tau_positive},
  \ref{as:tau-gamma} and~\ref{as:gamma} implies that $\beta$ is
  $L^1_{loc}(\R_+)$.
  Hypotheses~\ref{as:tau},~\ref{as:gamma} and~\ref{as:beta} altogether
  is nothing but having, for any $v$ in $\cV$, that $\tau(v,\cdot)$,
  $\gamma(v,\cdot)$ and their ratio satisfy the assumptions made by
  Doumic and Gabriel in~\cite{doumic_jauffret_eigenelements_2010} (to
  ensure existence of a solution to~\ref{pb:GF}) with the
  additional requirement that $\gamma$ is $L^\infty(\R_+^*)$.
  Under~\ref{as:V}, this straightforward adaptation of their
  assumptions is sufficient to state existence but we prove it under
  the additional modeling assumption~\ref{as:tau-gamma} which brings
  small simplifications.
  
  The additional assumption~\ref{as:tau_hetero} is crucial to
  characterize the functions canceling the dissipation of entropy,
  needed to establish uniqueness of eigenelements and convergence. It
  can be interpreted as follows: every time a mother of feature $v$
  gives birth to cells with same features $v' \neq v$, the daughters
  cannot gain together the exact mass that their mother would have
  gained if it had not divided; or equivalently from a macroscopic
  point of view: the mass that would have grown the fraction
  $\kappa(v', v)$ of all dividing cells of feature $v'$ cannot be the
  total mass gained by their daughters born with feature~$v \neq v'$.
  
\item \textit{Variability kernel.}  For $m > 0$,
  \smash{$\kappa^{(m)}_{ij}$} represents the probability for a cell
  descending from $m$ {generations} of a cell with feature $v_i$
  to have feature~$v_j$. Therefore~\ref{as:kappa_irr} means that any
  cell has a non-zero probability to transmit any feature in a finite
  number of division ($M$ at most).
  Besides, $\kappa$ can be associated to the directed graph having
  $\{v_1, \ldots v_M\}$ as vertices and the pairs $(v_i, v_j)$ such
  that $\kappa_{ij} >0$ as oriented edges. Then,~\ref{as:kappa_irr} is
  equivalent to the \emph{graph of $\kappa$} being \emph{strongly
    connected} (every vertex is reachable from every other vertex).

  The first assumption on $\kappa$ easily translates to the continuous
  setting~\ref{as:V_continuous} (this is $\kappa$ being a
  \emph{Markov} or \emph{probability} \emph{kernel}\footnote{
    $\kappa(v, \cdot)$ is a probability measure for every $v$ (i.e.
    $v \mapsto \int_\cV \varphi(v') \kappa(v, \dd{v'}) $, for
    $\varphi \in \sC(\cV)$, is Lebesgue measurable and
    $\int_\cV \kappa(v, \dd{v'}) =1$, $v \in \cV$) and
    $v \mapsto \kappa(v, A)$ is a measurable function for all
    $A \in \mathcal{B}(\cV)$.} as assumed
  in~\cite{doumic_statistical_2015, olivier_how_2017}). It is however
  more complicated to generalize~\ref{as:kappa_irr}, crucial to
  establish the positivity result of \Cref{prop:positivity}, to the
  continuous setting (see~\cite{meyn_markov_1993} for a
  generalization). Additionally, irreducibility alone could not be
  appropriate to extend our results to larger $\cV$: for countable
  infinite sets $\cV$, we might additionally need $\kappa$
  \emph{recurrent}~\cite{norris_markov_1997}; as for continuous sets,
  irreducibility could be less relevant as suggest the recent
  articles~\cite{cloez_irreducibility_2020,
    bansaye_non-conservative_2022}. Their probabilistic approach via
  Harris's theorem proves particularly suited to obtain existence of a
  steady state and long-time convergence at exponential rate in the
  setting of measure solutions; see also~\cite{canizo_spectral_2021}
  for a good introduction to the Harris theorem and its application to
  growth-fragmentation in different settings including equal mitosis.
\end{itemize}

\subsection{Variation of the Malthus parameter with respect to
  coefficients}
\label{ssec:monotonicity}

\subsubsection{In the absence of variability.}
\label{ssec:monotonicity_no_V}

When there is no variability in growth rate, the eigenproblem
associated to the Cauchy problem~\ref{pb:GFt} writes as
follows: for $x>0$,
\begin{equation*}\label{pb:GF}
  \tag*{\textnormal{(GF)}}
  \left\{
    \begin{aligned}
      &\tfrac{\dd}{\dd{x}} \big( \tau(x) N(x) \big)
      + \big( \lambda + \gamma (x) \big) N(x)
      = 4 \gamma(2x) N(2x), \quad \tau(0) N(0) = 0,
      \quad \mint N = 1, \quad  N \geq 0,\\
      &\!-\!  \tau(x) \tfrac{\dd}{\dd{x}} \phi(x)
      + \big( \lambda + \gamma (x) \big) \phi(x)
      = 2 \gamma (x) \phi \Big(\mfrac{x}{2} \Big),
      \qquad \mint N \phi = 1, \qquad \phi \geq 0,
    \end{aligned}
  \right.
\end{equation*}
and the only coefficients of the problem are $\tau$ and $\gamma$. In
the general fragmentation model, note that another coefficient is the
fragmentation kernel (that is
$\frac{1}{2} ( \delta_{x=ry} + \delta_{x= (1-r)y} )$ in the case of
\emph{general mitosis}, with \smash{$r =\frac{1}{2}$} in our case of
equal mitosis).

Among existing studies on the variation of $\lambda$ w.r.t variations
of the coefficients, Michel~\cite{michel_optimal_2006} focuses on the
influence of the asymmetry in birth size between daughters (given by
the parameter~$r$ just mentioned) from two different points of view
--either by differentiation of $\lambda$ with respect to the asymmetry
parameter $r$ or by a min-max principle providing a handful expression
of $\lambda$. Monotonicity of $\lambda$ w.r.t $r$ is proved in the
case of constant growth rate and compactly supported division rate
that satisfies conditions proved sufficient for the adjoint vector to
be concave or convex. In particular, depending on the form of the
division rate, symmetric division can be detrimental to the growth of
the overall population.

A corpus of articles by Clairambault et al. investigates the influence
on $\lambda$ of the time periodic dependence of coefficients (division
and death rates) induced by the circadian rhythm. However, they rather
consider the renewal (age-structured)
equation~\cite{gaubert_discrete_2015} or a system of renewal equations
accounting for different phases of the cell
cycle~\cite{clairambault_circadian_2006,billy_synchronisation_2014}.

Campillo, Champagnat and Fritsch~\cite{campillo_variations_2017}
provide sufficient conditions on $\tau$, $\gamma$ and $\gamma / \tau$
for the monotonicity of $\lambda$ w.r.t. monotonous variations of the
coefficients. The result is derived in a probabilistic framework by
coupling techniques and eventually extended to the deterministic
framework thanks to a relation between the survival probability of the
stochastic model and the Malthus parameter solution to the
deterministic problem.

With only deterministic tools and straightforward arguments, we
propose another set of conditions.
\begin{lemma}\label{thm:monotonicity}
  Set $\cV \coloneqq \{v_1, v_2 \}$ and take $\tau$, $\gamma$
  satisfying ~\ref{as:tau},~\ref{as:gamma} such that
  $\frac{\tau(v_i, \cdot)}{\gamma(v_i, \cdot)}$
  satisfies~\ref{as:beta} for $i \in \{1,2 \}$. Denote by
  $(\lambda_{i}, N_i, \phi_i)$, $\lambda_i >0$, the weak solution to
  the eigenproblem~\ref{pb:GF} with coefficients
  $\tau_i\coloneqq \tau(v_i,\cdot)$ and
  $\gamma_i\coloneqq \gamma (v_i ,\cdot)$.  Assume the following
  \begin{equation*}
    \textit{i)~} \tau_1 \leq \tau_2,
    \qquad \textit{ii)~} \Big (\mfrac{\gamma_2}{\tau_2}
    - \mfrac{\gamma_1}{\tau_1} \Big) \Big( \mfrac{1}{2} \phi_i
    - \phi_i \big( \mfrac{\cdot}{2} \big) \Big) \leq 0
    \quad i=1 \text{~or~} 2,
    \qquad \text{a.e. on~~} (0, +\infty),
  \end{equation*}
  with notation
  $\phi \big( \mfrac{ \cdot}{2} \big) : x \mapsto \phi \big(
  \mfrac{x}{2} \big)$. Then we have
  \begin{equation*}
    \lambda_{1} \leq \lambda_{2}.
  \end{equation*}
\end{lemma}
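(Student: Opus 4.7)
The plan is to use the classical duality argument combining the direct eigenproblem for $N_1$ and the adjoint one for $\phi_2$ (or, symmetrically, $N_2$ with $\phi_1$ when (ii) holds with $i=1$). Testing the weak formulation of the direct equation for $N_1$ against $\phi_2$ and integrating the adjoint equation for $\phi_2$ against $N_1$ -- after integration by parts (using $\tau_1(0)N_1(0)=0$ together with decay at infinity) and the change of variable $y=2x$ in the fragmentation terms -- the difference of the two identities produces
\begin{equation*}
  (\lambda_2 - \lambda_1) \int_0^\infty \! N_1 \phi_2 \, \dd{x}
  = \int_0^\infty \! N_1 (\tau_2 - \tau_1) \phi_2' \, \dd{x}
  - 2 \int_0^\infty \! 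N_1 (\gamma_2 - \gamma_1) \Big( \tfrac{1}{2}\phi_2 - \phi_2 \big( \tfrac{\cdot}{2} \big) \Big) \dd{x}.
\end{equation*}

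The sign of $\phi_2'$ is \emph{a priori} unknown, so (i) alone does not control the right-hand side. The crux is to exploit the modeling relation $\gamma_i = \tau_i \beta_i$, with $\beta_i \coloneqq \gamma_i/\tau_i$, splitting the fragmentation difference as $\gamma_2 - \gamma_1 = \tau_2(\beta_2 - \beta_1) + (\tau_2 - \tau_1)\beta_1$, and using the adjoint equation rewritten in the form $\tau_2 \phi_2' = \lambda_2 \phi_2 + 2\gamma_2 \bigl( \tfrac{1}{2}\phi_2 - \phi_2 (\tfrac{\cdot}{2}) \bigr)$ to eliminate $\phi_2'$ in the transport term. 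A short algebraic recombination -- the $(\tau_2-\tau_1)(\beta_2-\beta_1)$ contribution produced by the eigenequation cancels a piece of the fragmentation term -- leaves
\begin{equation*}
  (\lambda_2 - \lambda_1) \int_0^\infty \! N_1 \phi_2 \, \dd{x}
  = \lambda_2 \int_0^\infty \! N_1 \, \frac{\tau_2 - \tau_1}{\tau_2} \, \phi_2 \, \dd{x}
  - 2 \int_0^\infty \! N_1 \tau_1 (\beta_2 - \beta_1) \Big( \tfrac{1}{2}\phi_2 - \phi_2 \big( \tfrac{\cdot}{2} \big) \Big) \dd{x}.
\end{equation*}

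Both terms on the right are now manifestly non-negative: the first by (i) together with positivity of $N_1$, $\phi_2$, $\tau_2$ and $\lambda_2$; the second by (ii) with $i=2$ (a non-positive integrand multiplied by $-2$). Since $\int N_1 \phi_2 > 0$ by strict positivity of the eigenvectors, this yields $\lambda_1 \leq \lambda_2$. The case where (ii) holds with $i=1$ is handled in a fully symmetric way, by testing the direct equation for $N_2$ against $\phi_1$ and using instead the decomposition $\gamma_2 - \gamma_1 = \tau_1(\beta_2 - \beta_1) + (\tau_2 - \tau_1)\beta_2$ together with the adjoint equation for $\phi_1$. The only real obstacle is algebraic: guessing the correct splitting of $\gamma_2 - \gamma_1$ so that the piece proportional to $(\tau_2 - \tau_1)$ combines with $\phi_j'$, via the eigenequation, into a cleanly signed remainder.
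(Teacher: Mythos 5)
Your proof is correct and uses essentially the same duality argument as the paper: test the direct equation for $N_1$ against $\phi_2$, the adjoint equation for $\phi_2$ against $N_1$, eliminate $\phi_2'$ via the adjoint equation, and recombine to the identity $(\lambda_2 - \lambda_1)\la N_1, \phi_2\ra = \lambda_2 \la N_1, \frac{\tau_2-\tau_1}{\tau_2}\phi_2\ra - \la \tau_1 N_1, (\beta_2-\beta_1)(\phi_2 - 2\phi_2(\cdot/2))\ra$, which is exactly the paper's final formula. The only difference is bookkeeping: the paper substitutes the adjoint equation into $\tau_1\phi_2'$ and regroups the resulting products, whereas you keep the $(\tau_2-\tau_1)\phi_2'$ term separate and regroup the fragmentation difference, but the cancellation is the same.
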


\begin{proof} The proof relies on the duality relation between
  eigenvectors. Existence and uniqueness of positive eigenelements is
  guaranteed by~\cite[Theorem
  1]{doumic_jauffret_eigenelements_2010}. To simplify notations we
  introduce the bracket $\la f,g \ra \coloneqq \int_0^\infty f
  g$. Without loss of generality, assume that \textit{ii)} holds for
  $i=2$ (otherwise we can still swap 1 and 2, and reverse the
  inequalities in the following). Using equation~\ref{pb:GF} on
  $N_1$ and $\phi_2$ successively, we have:
  \begin{equation*}
    \begin{aligned}
      \lambda_1 \big\la N_1 , \phi_2 \big\ra
      &= \Big\la -\tfrac{\dd}{\dd{x}} \big( \tau_1 N_1 \big)
      -\gamma_1 N_1 + 4 \gamma_1 (2 \cdot) N_1 (2 \cdot), \phi_2 \Big\ra\\
      &= \Big\la N_1, \tau_1\tfrac{\dd}{\dd{x}} \phi_2
      - \gamma_1 \phi_2 + 2 \gamma_1 \phi_2
      \big( \mfrac{ \cdot}{2} \big) \Big\ra\\
      &= \Big\la N_1, \mfrac{\tau_1}{\tau_2} \Big( \lambda_2\phi_2
      + \gamma_2 \phi_2
      - 2 \gamma_2 \phi_2 \big( \mfrac{ \cdot}{2} \big) \Big)
      - \gamma_1 \phi_2
      + 2 \gamma_1 \phi_2 \big( \mfrac{\cdot}{2} \big) \Big\ra\\
      &= \lambda_2 \Big\la N_1 , \mfrac{\tau_1}{\tau_2} \phi_2 \Big\ra
      + \Big\la \tau_1 N_1, \Big( \mfrac{\gamma_2}{\tau_2}
      - \mfrac{\gamma_1}{\tau_1} \Big)
      \Big( \phi_2 - 2 \phi_2 \big( \mfrac{ \cdot}{2} \big) \Big) \Big\ra,
    \end{aligned}
  \end{equation*}
  leading, thanks to \textit{i)}, to the following inequality
  \begin{equation*}
    (\lambda_1-\lambda_2) \big\la N_1 , \phi_2 \big\ra
    \leq \Big\la \tau_1 N_1, \Big( \mfrac{\gamma_2}{\tau_2}
    - \mfrac{\gamma_1}{\tau_1} \Big) \Big( \phi_2 - 2 \phi_2
    \big( \mfrac{ \cdot}{2} \big) \Big) \Big\ra,
  \end{equation*}
  which makes clear that $\lambda_1 \leq \lambda_2$ as soon as
  \textit{ii)} is verified.
\end{proof}

In particular when~\ref{as:tau-gamma} is satisfied, \textit{ii)} is an
equality and \Cref{thm:monotonicity} directly brings monotonicity of
the Malthus parameter with respect to the growth rate as stated below.
\begin{theorem}[Monotonicity of the Malthus
  parameter]\label{cor:monotonicity}
  Let us take notations of \Cref{thm:monotonicity} and make the same
  assumptions on $\tau$ and $\gamma$. Assume also
  that~\ref{as:tau-gamma} holds, then
  \begin{equation*}
    \tau_1 \leq \tau_2 \text{~~a.e. on~~} (0, +\infty)
    \quad \Longrightarrow  \quad \lambda_1 \leq \lambda_2.
  \end{equation*}
  If in addition $\tau_1 \not\equiv \tau_2$, then the inequality
    on the Malthus parameters is strict.
\end{theorem}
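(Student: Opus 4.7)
The plan is immediate once one unpacks the modeling assumption \ref{as:tau-gamma}. Since $\gamma_i = \tau_i \beta$ for $i=1,2$, the ratio $\gamma_i/\tau_i = \beta$ does not depend on $i$, so the factor $\bigl(\gamma_2/\tau_2 - \gamma_1/\tau_1 \bigr)$ appearing in condition (ii) of \Cref{thm:monotonicity} vanishes identically. Condition (ii) is therefore trivially satisfied (in fact, as an equality, for both choices $i=1,2$), and the lemma directly yields $\lambda_1 \leq \lambda_2$ as soon as $\tau_1 \leq \tau_2$ a.e.\ on $(0,+\infty)$.

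For the strict inequality, rather than quoting the lemma as a black box, I would return to the chain of equalities in its proof. Under \ref{as:tau-gamma} the bracketed remainder vanishes and one is left with the \emph{exact} identity
\begin{equation*}
  (\lambda_1 - \lambda_2)\, \la N_1 , \phi_2 \ra
  = \lambda_2 \Big\la N_1 \phi_2, \; \mfrac{\tau_1 - \tau_2}{\tau_2} \Big\ra .
\end{equation*}
Since $\lambda_2 > 0$ (a consequence of the fragmentation dominating at infinity via \ref{as:beta_in_inf}) and $\tau_1 \leq \tau_2$, the right-hand side is non-positive. If moreover $\tau_1 \not\equiv \tau_2$, the integrand is strictly negative on a set of positive Lebesgue measure in $(0,+\infty)$.

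To close the argument one must verify that $N_1 \phi_2 > 0$ almost everywhere on $(0,+\infty)$, so that the right-hand side of the identity is strictly negative while the pairing $\la N_1, \phi_2\ra$ on the left is strictly positive. This is the only non-cosmetic step and the natural place for the main (though mild) obstacle: it follows from the standard strict positivity of the primal and dual eigenvectors in the no-variability setting, established in \cite{doumic_jauffret_eigenelements_2010} under the very hypotheses \ref{as:tau}, \ref{as:gamma}, \ref{as:beta} assumed here (the scalar analogue of \Cref{prop:positivity}). The assumptions \ref{as:tau_positive} and \ref{as:beta_L0}--\ref{as:beta_in_inf} exactly rule out the degenerate regimes where $N_1$ or $\phi_2$ could vanish on a non-negligible set, so one concludes $\lambda_1 < \lambda_2$.
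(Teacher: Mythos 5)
Your non-strict part is exactly the paper's argument: under \ref{as:tau-gamma} the ratio $\gamma_i/\tau_i=\beta$ is the same for $i=1,2$, so condition \textit{ii)} of \Cref{thm:monotonicity} holds as an identity and the lemma gives $\lambda_1\leq\lambda_2$. The exact identity you extract for the strict part,
$(\lambda_1-\lambda_2)\la N_1,\phi_2\ra=\lambda_2\bigl\la N_1\phi_2,\tfrac{\tau_1-\tau_2}{\tau_2}\bigr\ra$,
is also correct (the paper itself offers no proof of the strictness claim, so here you are supplying something the paper omits).

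The gap is in your final positivity step: it is \emph{not} true that $N_1\phi_2>0$ a.e.\ on $(0,+\infty)$. Under~\ref{as:beta_support} the division rate may vanish on $[0,b)$ with $b>0$, and then the direct eigenvector satisfies $\supp N_1=[\tfrac{b}{2},+\infty)$ — see the scalar analogue of \Cref{prop:positivity} and~\cite{doumic_jauffret_eigenelements_2010}; $N_1$ vanishes identically on the positive-measure set $[0,\tfrac{b}{2}]$. Hypotheses \ref{as:tau_positive} and \ref{as:beta_L0}--\ref{as:beta_in_inf} do not rule this out. Consequently your right-hand side is strictly negative only when $\tau_1<\tau_2$ on a positive-measure subset of $(\tfrac{b}{2},+\infty)$; if $\tau_1\not\equiv\tau_2$ but the two rates differ only on $(0,\tfrac{b}{2})$, your argument yields $0$ on the right and no strict inequality (indeed in that case the eigenproblem on $[\tfrac{b}{2},+\infty)$ is unchanged and $\lambda_1=\lambda_2$, so the strictness assertion itself needs $b=0$ or the hypothesis ``$\tau_1\not\equiv\tau_2$ on $(\tfrac{b}{2},+\infty)$''). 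You should either restrict to that situation or state the correct support of $N_1$ and add the corresponding hypothesis.
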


It should be noticed that another set of assumptions can be derived,
namely:
\begin{equation*}
  \textit{i)~} \gamma_1 \leq \gamma_2,
  \qquad \textit{ii)~} \Big( \mfrac{\tau_1}{\tau_2}
  - \mfrac{\gamma_1}{\gamma_2} \Big) \tfrac{\dd}{\dd{x}} \phi_2
  \leq 0 \quad i=1 \textit{~or~} 2,
  \qquad \textit{a.e. on~~} (0, +\infty),
\end{equation*}
since the third equality in the proof can be replaced by
\begin{equation*}
  \lambda_1 \big\la N_1 , \phi_2 \big\ra
  = \lambda_2 \Big\la N_1 , \mfrac{\gamma_1}{\gamma_2} \phi_2 \Big\ra
  + \Big\la  N_1, \Big( \mfrac{\tau_1}{\tau_2} - 
  \mfrac{\gamma_1}{\gamma_2} \Big)
  \tau_2 \tfrac{\dd}{\dd{x}} \phi_2  \Big\ra.
\end{equation*}

We make a few remarks on \Cref{thm:monotonicity} before
continuing on the problem with variability.
\begin{itemize}[leftmargin=.7cm,parsep=0cm,itemsep=0.1cm,topsep=0cm]

\item \Cref{thm:monotonicity} and \Cref{cor:monotonicity} remain valid
  for general fragmentation kernels verifying the general assumptions
  of~\cite{doumic_jauffret_eigenelements_2010}.

\item Assumption~\textit{ii)} depends on the adjoint eigenvector which
  makes it hardly interpretable. Although not very satisfying, we kept
  it for several reasons. First, it needs to be satisfied for only one
  $i$, therefore in practice one can obtain monotonicity between a
  problem with explicit solution $(\lambda, N, \phi)$ and another
  problem, with no explicit solution but lower (or greater) growth
  rate and greater (lower resp.) fragmentation/growth ratio wherever
  \smash{$ \phi \big( \frac{ \cdot}{2} \big) \leq \frac{1}{2} \phi$}
  --or simply lower (or greater) growth rate when
  \smash{$\phi \big( \frac{ \cdot}{2} \big) = \frac{1}{2} \phi$}
  (which is true for most of the known solutions, see e.g. the
  explicit solutions given
  in~\cite{doumic_jauffret_eigenelements_2010} for uniform
  fragmentation, or in~\cite{perthame_exponential_2005,
    bernard_cyclic_2019} for mitosis).
  
  Second, it is interesting to see that we retrieve conditions similar
  to those of~\cite{campillo_variations_2017}, but through a different
  and much quicker approach allowing for more general
  coefficients. Assuming $\tau_1 \leq \tau_2$ and
  {$\frac{\gamma_2}{\tau_2} \leq \frac{\gamma_1}{\tau_1}$} as
  in~\cite{campillo_variations_2017}, we require
  \smash{$ \phi_i \big( \frac{ \cdot}{2} \big) \leq \frac{1}{2}
    \phi_i$} where they ask for $\gamma$ to be monotonous not only in
  the $v$ variable but also in the $x$ variable.

\item For our lemma to be interpretable in terms of coefficients only,
  the problem comes down to finding assumptions on the coefficients
  allowing to characterize the sign of $\frac{\dd}{\dd{x}} \phi$ (or
  $\phi \big( \frac{ \cdot}{2} \big) -\frac{1}{2} \phi$, implied
  because of~\ref{pb:GF}) in the spirit
  of~\cite{michel_optimal_2006} that derives conditions on $\gamma$
  for $\phi$ to be concave or convex.
\end{itemize}

\subsubsection{In {the} presence of variability.}
\label{ssec:monotonicity_V}

When it comes to the question of the impact of variability on the
Malthus parameter even less studies are available although attracting
a lot of attention from biologists, see~\cite{levien_non-genetic_2021}
for a recent review in the biological community.

Among the most notable studies, Cloez, de Saporta and
Roget~\cite{cloez_long-time_2021} add to the morphological asymmetry
between daughters studied by Michel~\cite{michel_optimal_2006} a
{physiological} asymmetry, attributing different growth rates to
daughters depending on whether or not they have bigger size. In
particular, in the case $\tau$ linear (and
\smash{$\lim_{x \rightarrow L} \gamma(x)=L$}, for $L= 0$ or $+\infty$)
they explicitly compute the partial derivatives of $\lambda$ w.r.t.
the asymmetry parameters and find that in {the} presence of
morphological asymmetry, physiological asymmetry maximizes $\lambda$
if largest cell growth faster.

Rather than two different growth rates in the whole population,
Olivier~\cite{olivier_how_2017} allows for a continuous set of growth
rates but equal mitosis, this is model~\ref{pb:GFtv}
under~\ref{as:V_continuous}. Her numerical results indicate that
variability in growth rate do not favor the global growth of the
population --more precisely she exhibits, under specific coefficients,
monotonicity of the Malthus parameter w.r.t. the coefficient of
variation (CV) of growth rates-- in short, w.r.t a parameter $\alpha$
characterizing the level of variability in the sense that the set of
features $\cV_\alpha = \alpha \cV + (1-\alpha) \bar{v}$ is more or less
spread out depending on~$\alpha$.

Relying on the ideas used to treat the case with no variability we can
compare the eigenvalue of problem~\ref{pb:GFv} to eigenvalues
solution to the problem with no variability.

\begin{theorem}[Monotonicity of the Malthus parameter with
  variability]\label{thm:monotonicity_V}
  Consider $(\lambda, N ,\phi)$, with $\lambda>0$, a weak solution
  to~\ref{pb:GFv} {with kernel $\kappa$
    satisfying~\ref{as:kappa_cv}}, and coefficient~$\tau$ and $\gamma$
  satisfying 
  \ref{as:tau-gamma} {s.t. their ratio $\beta=\beta(x)$ satisfies
    \ref{as:beta}}. Let $\tau_1$ and~$\tau_2$ satisfy~\ref{as:tau} and
  be such that
  \begin{equation*}
    \tau_1 \leq \tau( v, \cdot) \leq \tau_2
    \qquad \text{a.e. on~} (0,+\infty),
    \qquad \forall v \in \cV,
  \end{equation*}
  and define $\gamma_i \coloneqq \beta \tau_ i$, for $i=1, 2 $. Assume
  that $ \gamma_i$ satisfies \ref{as:gamma} and denote by
  $(\lambda_i, N_i, \phi_i)$, the solution to the
  eigenproblem~\ref{pb:GF} with no variability and coefficients
  $\tau_i$ and $\gamma_i$. Then
  \begin{equation*}
    \lambda_1 \leq \lambda \leq \lambda_2,
  \end{equation*}
  {with strict inequality if in addition
    $\tau(v, \cdot) \not\equiv \tau_1 , \tau_2$ for some $v \in \cV$}.
\end{theorem}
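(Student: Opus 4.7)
The plan is to adapt the duality argument used for \Cref{thm:monotonicity} to the system with variability. Crucially, the adjoint eigenvectors $\phi_1, \phi_2$ of the no-variability problem depend only on $x$, so they extend trivially to test functions on $\cS$ by constancy in the feature. The idea is to test the direct eigenproblem satisfied by $N$ against $\phi_2$ and derive the sharp identity
\begin{equation*}
  \lambda \mint_\cS N(v,x) \phi_2(x) \, \dd v \dd x
  = \lambda_2 \mint_\cS \mfrac{\tau(v,x)}{\tau_2(x)} N(v,x) \phi_2(x) \, \dd v \dd x,
\end{equation*}
from which $\lambda \leq \lambda_2$ follows at once from $\tau(v,\cdot) \leq \tau_2$ a.e.\ and non-negativity of $N, \phi_2$. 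The lower bound $\lambda_1 \leq \lambda$ is obtained symmetrically by testing against $\phi_1$.

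The key computation proceeds in three short steps. First, using $\phi_2$ as a test function against the direct equation of~\ref{pb:GFv} and integrating the transport term by parts yields a surviving fragmentation contribution $4\iint_\cS \bigl(\int_\cV \gamma(v',2x) N(v',2x) \kappa(v',v) \dd v'\bigr) \phi_2(x) \dd v \dd x$. Since $\phi_2$ is independent of $v$ and $\kappa$ is stochastic by~\ref{as:kappa_cv}, summing $\kappa(v',v)$ over the daughter variable gives $1$, and the change of variable $y = 2x$ turns the whole fragmentation term into $2 \iint_\cS \gamma(v,y) N(v,y) \phi_2(y/2) \dd v \dd y$. Second, invoking the modeling relation $\gamma = \beta \tau$ from~\ref{as:tau-gamma} allows factoring $\tau(v,x)$ out, to obtain
\begin{equation*}
  \lambda \mint_\cS N \phi_2 \, \dd v \dd x
  = \mint_\cS \tau N \bigl[\tfrac{\dd}{\dd x}\phi_2 - \beta \phi_2 + 2 \beta \phi_2(\cdot/2) \bigr] \dd v \dd x.
\end{equation*}
Third, since $\phi_2$ solves the adjoint eigenproblem of~\ref{pb:GF} with coefficients $\tau_2$ and $\gamma_2 = \beta \tau_2$, rearranging that equation gives $\tfrac{\dd}{\dd x}\phi_2 - \beta \phi_2 + 2 \beta \phi_2(\cdot/2) = \lambda_2 \phi_2 / \tau_2$ a.e., which yields the announced identity.

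What remains is to justify that $\mint_\cS N \phi_2 > 0$, so as to conclude on $\lambda$, and to obtain the strict inequality under the additional assumption. The positivity of $\phi_2$ on a suitable neighborhood of $+\infty$ is known from~\cite[Theorem~1]{doumic_jauffret_eigenelements_2010}, while the positivity of $N$ on the corresponding set comes from \Cref{prop:positivity}; their overlap secures the required non-degeneracy. For the strict case, the factor $\tau(v,\cdot)/\tau_2$ is strictly less than $1$ on a set of positive Lebesgue measure as soon as $\tau(v,\cdot) \not\equiv \tau_2$ for some $v$, and this set intersects the support of $N \phi_2$ by the same positivity results, producing a strict inequality in the identity above.

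The main obstacle I foresee is precisely this last positivity intersection: ensuring that the set where $\tau$ departs from $\tau_2$ (or $\tau_1$) is not a Lebesgue-null subset outside the common positivity region of $N$ and $\phi_2$ (resp. $\phi_1$). Once the qualitative positivity statements of \Cref{prop:positivity} and of~\cite{doumic_jauffret_eigenelements_2010} are invoked, the rest of the proof is algebraic and mimics the calculation already carried out for \Cref{thm:monotonicity}.
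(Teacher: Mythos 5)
Your proposal follows essentially the same duality argument as the paper: test the direct equation of~\ref{pb:GFv} against the $v$-independent adjoint eigenvector $\phi_i$, collapse the variability kernel using $\int_\cV \kappa(v',v)\,\dd v = 1$ from~\ref{as:kappa_cv}, substitute $\gamma = \beta\tau$, and use the adjoint equation for $\phi_i$ to arrive at $\lambda\,\la N,\phi_i\ra = \lambda_i \la N, (\tau/\tau_i)\phi_i\ra$ (the second bracket in the paper's equation~\eqref{eq:proof_monotonicity_V} vanishing exactly because $\gamma/\tau = \gamma_i/\tau_i = \beta$).

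One small overreach: you invoke \Cref{prop:positivity} to secure positivity of $N$, but that proposition requires~\ref{as:kappa_irr}, which is not assumed in \Cref{thm:monotonicity_V}. For the weak inequality this detour is unnecessary: $\phi_2 > 0$ on $(0,\infty)$ from~\cite[Theorem~1]{doumic_jauffret_eigenelements_2010} and $\int_\cS N = 1$ with $N \geq 0$ already give $\iint_\cS N\,\phi_2 > 0$. You are right, however, that the strict inequality genuinely requires the set where $\tau(v,\cdot) < \tau_2$ to meet the support of $N\,\phi_2$ in positive measure; without irreducibility the support of $N$ may be confined to a strict subset of $\cV$, so this step is not automatic. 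The paper's own proof is silent on the strict case, so flagging it as the residual gap is a fair observation rather than a defect of your argument.
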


\begin{proof} Proceeding similarly to the case with no variability and
  but with duality bracket
  $\la f, g \ra = \int \! \! \! \int_{\cS} fg$, and using
  \ref{as:kappa_cv}, we find:
  \begin{equation}\label{eq:proof_monotonicity_V}
    \begin{aligned}
      \lambda \big\la N, \phi_i \big\ra
      &= \lambda_i \Big\la N, \mfrac{\tau}{\tau_i} \phi_i \Big\ra
      + \Big\la \tau N,
      \Big( \mfrac{\gamma_i}{\tau_i} - \mfrac{\gamma}{\tau} \Big)
      \Big(\phi_i- 2 \phi_i \big( \mfrac{\cdot}{2} \big) \Big)\Big\ra,
      \quad i \in \{1, 2 \}.
    \end{aligned}
  \end{equation}
  From~\ref{as:tau-gamma} and the definition of the $\gamma_i$, the
  second term of the right-hand side cancels and the inequality on
  $\lambda$ is a direct consequence of the inequality on $\tau$.
\end{proof}

In particular, a population where~\ref{as:tau-gamma} holds and $\tau$
is monotonous in $v \in [v_{min}, v_{max}]$, asymptotically grows
faster, respectively slower, than the population where all cells have
feature $v_{min}$ or $v_{max}$, respectively.

Note that \Cref{thm:monotonicity_V}, which assumes existence of a
solution to~\ref{pb:GFv}, holds true in the continuous
formulation~\ref{as:V_continuous}. Besides, a finer condition of the
type of \Cref{thm:monotonicity} can be formulated
from~\eqref{eq:proof_monotonicity_V}.

\subsection{Eigenvalue problem}

In the following, we elaborate on previously established results
adapted to our case to prove existence of a solution
to~\ref{pb:GFv} and GRE related results
(Subsections~\ref{ssec:GRE} and~\ref{ssec:conv}).

\begin{theorem}[Existence of eigenelements]\label{thm:existence_eig}
  Under assumptions~\ref{as:V},~\ref{as:tau},
  \ref{as:gamma},\ref{as:tau-gamma}, \ref{as:beta}
  and~\ref{as:kappa_cv}, there exists a weak solution
  $(\lambda,N,\phi)$ to the eigenproblem~\ref{pb:GFv} with
  $\lambda >0$, and for every $i \in \cI$ we have:
  \begin{gather*}
    x^\alpha \tau_i N_i \in L^p(\R_+),
    \quad {\forall \alpha \in \R,}
    \quad 1 \leq p \leq + \infty,
    \qquad x^\alpha \tau_i N_i \in W^{1,1}(\R_+),
    \quad \forall \alpha \geq  0,\\[3pt]
    \exists k >0 : \mfrac{\phi_i}{1+x^k} \in L^\infty(\R_+), \qquad
    \tau_i \mpdvx \phi_i \in L_{loc}^\infty(\R_+).
  \end{gather*}
\end{theorem}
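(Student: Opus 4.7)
The plan is to proceed via truncation and the Krein-Rutman theorem, following the strategy of Doumic-Gabriel~\cite{doumic_jauffret_eigenelements_2010} adapted to the coupled $M$-dimensional system that arises from the finiteness assumption~\ref{as:V}. Since $\cV$ is finite, \ref{pb:GFv} reads as an $M \times M$ system on $(0,+\infty)$ coupled only through the fragmentation kernel $\kappa$. The overall scheme is: truncate the size domain to $[\epsilon, R]$, apply Krein-Rutman to the resulting operator to obtain positive eigenelements of the truncated problem, establish uniform-in-$(\epsilon,R)$ estimates on the direct and adjoint eigenvectors, and pass to the limit $\epsilon\to 0$, $R\to+\infty$.

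First I would fix $0<\epsilon<R$ and regularize/truncate the coefficients outside $[\epsilon, R]$ so that the resulting operator acts on $\ell^1(\cI; L^1(\epsilon, R))$ and preserves the positive cone. After a shift by a large $\mu I$, its inverse is a compact positive operator on a Banach lattice, and Krein-Rutman --- together with the non-negativity of $\kappa$ from~\ref{as:kappa_cv} ensuring positivity of the fragmentation coupling --- produces a Malthusian triple $(\lambda^{\epsilon, R}, N^{\epsilon, R}, \phi^{\epsilon, R})$ with $\lambda^{\epsilon, R}>0$, and $N^{\epsilon, R}$, $\phi^{\epsilon, R}$ normalized as in~\ref{pb:GFv}.

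Next I would derive the a priori estimates. Two-sided bounds $0 < \underline{\lambda} \leq \lambda^{\epsilon, R} \leq \overline{\lambda}$ follow by comparison with the scalar problem~\ref{pb:GF} for growth rates $\tau_1 \leq \tau(v,\cdot) \leq \tau_2$, in the spirit of \Cref{thm:monotonicity_V}. Moment estimates of arbitrary order on $x^\alpha \tau_i N_i^{\epsilon, R}$ are obtained by testing the direct equation against $x^\alpha$ summed over $v\in\cV$: upper moments at $+\infty$ are controlled through~\ref{as:beta_in_inf} and~\ref{as:tau_in_inf}, negative ones near $0$ through~\ref{as:tau_in_0} and~\ref{as:beta_L0}, while the $v$-coupling terms telescope thanks to the stochasticity of $\kappa$. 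The $W^{1,1}$ bounds on $x^\alpha \tau_i N_i^{\epsilon, R}$ are then read off the equation itself. For the adjoint, a dual moment computation against test functions $x^k$ yields the polynomial growth bound $\phi_i^{\epsilon, R}/(1+x^k)\in L^\infty$ uniformly in $(\epsilon,R)$; local $L^\infty$ control on $\tau_i\partial_x\phi_i^{\epsilon, R}$ is then obtained by inverting the linear relation provided by the adjoint equation.

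Finally, combining these uniform bounds with the Dunford-Pettis criterion yields strong $L^1$ compactness of $(N^{\epsilon, R})$ and weak-$\star$ compactness of $(\phi^{\epsilon, R})$ in the polynomial-growth space. The limit $(\lambda, N, \phi)$ solves~\ref{pb:GFv} weakly; positivity of $\lambda$ and non-triviality of $N$ are preserved by the uniform moment lower bounds, and the normalization conditions pass to the limit. The main obstacle is the uniform control of the moments of $N^{\epsilon, R}$ simultaneously at $0$ and at $+\infty$: assumptions~\ref{as:tau_in_0}, \ref{as:tau_in_inf} and~\ref{as:beta_in_inf} are precisely calibrated for the scalar case, but the matrix coupling forces every estimate to be carried out on the full system rather than feature by feature, which is where the stochasticity condition~\ref{as:kappa_cv} is essential. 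Note that irreducibility~\ref{as:kappa_irr} is \emph{not} needed for this existence result; it will only be used later for uniqueness and convergence.
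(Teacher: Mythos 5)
Your overall strategy — truncate, apply Kre\u{\i}n-Rutman, derive uniform moment estimates, and pass to the limit — matches the paper's, and your closing remark that irreducibility~\ref{as:kappa_irr} is not needed for existence is correct. There is, however, a genuine gap in the Kre\u{\i}n-Rutman step. You propose to apply the theorem on $\ell^1\big(\cI; L^1(\epsilon, R)\big)$ to a compact positive operator, but the \emph{strong} Kre\u{\i}n-Rutman theorem — the one that produces a strictly positive eigenvector and hence a nondegenerate normalization $\iint N \phi = 1$ at the truncated level — requires the positive cone to have nonempty interior, a property that $L^1$ does not have (nor does any $L^p$, $p < \infty$). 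In a Banach lattice such as $L^1$ one only has the weak (de Pagter) version, which yields a nonnegative eigenvector for the spectral radius; upgrading this to strict positivity requires an irreducibility argument in the lattice sense that you neither invoke nor can establish before the estimates are in place. The paper sidesteps this by introducing an additional mollifier regularization so that the truncated operator acts on $\sC([0,R])^{\texp{M}}$, whose positive cone does have nonempty interior, and by imposing a \emph{positive} Dirichlet condition $\tau_i^\eta(0) N_i^{\eta,\delta}(0) = \delta > 0$ at the left boundary — without it the transport part near $x=0$ is not positivity-improving, so the solution operator is not strongly positive even in $\sC$. Your truncation to $[\epsilon, R]$ leaves the left boundary condition unspecified, so this device is simply missing from your sketch.

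A secondary point worth flagging: the truncation and regularization parameters cannot be sent to their limits independently. The paper links them through $2\delta M R = \mu(\eta, R)$, which is needed both to close the contraction estimate underlying well-posedness of the regularized problem and to guarantee that $R \to \infty$ as the boundary data $\delta \to 0$; a two-parameter truncation $[\epsilon, R]$ would need an analogous coupling to keep the moment bounds at $0$ and at $+\infty$ simultaneously under control. The remainder of your sketch (comparison bounds on the eigenvalue in the spirit of \Cref{thm:monotonicity_V}, moment estimates obtained by testing against $x^\alpha$ using \ref{as:beta_in_inf}, \ref{as:tau_in_inf}, \ref{as:tau_in_0}, \ref{as:beta_L0}, and the stochasticity \ref{as:kappa_cv}, then Dunford--Pettis for $N$ and a polynomial supersolution plus local Lipschitz control for $\phi$) is consistent with the paper's Steps iv)--vi).
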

In particular, note that $\tau_i N_i$ and $\phi_i$ are continuous on
$(0, +\infty)$.

The proof is postponed to \Cref{sec:proof}. It follows the proof
of~\cite[Theorem 1]{doumic_jauffret_eigenelements_2010} in which Doumic
and Gabriel state, under similar assumptions on the coefficients, the
existence of eigenelements to the problem with no variability but
general fragmentation. As
in~\cite{doumic_jauffret_eigenelements_2010}, we can also derive a
priori results on the localization of the support of the eigenvectors.

\begin{prop}[Positivity]\label{prop:positivity}
  {Consider $(\lambda, N, \phi)$ a weak solution
    to~\ref{pb:GFv} with $\lambda >0$.}
  Assume~\ref{as:V}, \ref{as:tau_positive}, \ref{as:tau-gamma},
  \ref{as:beta_support} and~\ref{as:kappa_irr} satisfied. Then, for
  all $i \in \cI$
  \begin{gather*}
    \supp N_i= \big[\tfrac{b}{2}, +\infty \big), \qquad \tau_i(x) N_i
    (x) >0,
    \quad \forall x \in \big(\tfrac{b}{2}, +\infty \big),\\
    \phi_i(x) >0, \quad \forall x > 0,
  \end{gather*}
  with $b$ defined by~\ref{as:beta_support}. Additionally, for any
  $j \in \cI$ such that $\frac{1}{\tau_j} \in L^1_0$ we have
  \begin{equation*}
    \phi_j (0) >0.
  \end{equation*}
\end{prop}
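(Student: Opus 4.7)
The proposition bundles three statements---the location and strict positivity of $\supp N_i$, positivity of $\phi_i$ on $(0,+\infty)$, and positivity of $\phi_j(0)$---which I would tackle separately by combining Duhamel-type representations with \ref{as:kappa_irr}, using the continuity of $\tau_i N_i$ and $\phi_i$ on $(0,+\infty)$ provided by \Cref{thm:existence_eig} to convert a.e.\ statements into pointwise ones. For the support of $N_i$, integrate the direct eigen-equation on $(0,x]$ with $x\leq b/2$: by \ref{as:tau-gamma} and \ref{as:beta_support}, $\gamma_i=\beta\tau_i\equiv 0$ on $(0,b)$ and $\beta(2\cdot)\equiv 0$ on $(0,b/2)$, so the equation collapses to $\tau_i(x)N_i(x)+\lambda\int_0^x N_i=0$. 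Both terms being non-negative forces $N_i\equiv 0$ a.e.\ on $(0,b/2)$, and then $u_i:=\tau_i N_i\equiv 0$ pointwise on $[0,b/2]$ by continuity.

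For strict positivity of $u_i$ on $(b/2,+\infty)$, rewrite the equation as $u_i'+\mu_i u_i=R_i$ with $\mu_i:=\lambda/\tau_i+\beta$ and $R_i:=4\beta(2\cdot)\sum_j\kappa_{ji}u_j(2\cdot)\geq 0$; using $u_i(b/2)=0$, the Duhamel formula reads
\begin{equation*}
u_i(x) = 4\int_{b/2}^x e^{-\int_s^x \mu_i}\, \beta(2s)\sum_j \kappa_{ji}\, u_j(2s)\, \dd s,
\end{equation*}
so that $e^{\int \mu_i} u_i$ is non-decreasing and positivity of $u_i$ propagates forward. Since $\sum_i\int_0^\infty N_i=1$ forces some $u_{i_0}$ to be positive at an $x_0>b/2$ (by continuity of $u_{i_0}$ and Step~1), we obtain $u_{i_0}>0$ on $[x_0,+\infty)$. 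For any other $i$, \ref{as:kappa_irr} provides a $\kappa$-chain $i_0\to j_1\to\cdots\to j_m=i$ of positive weights; combining with \ref{as:beta_in_inf} so that $\beta(2s)>0$ for $s$ large enough, an induction along the chain yields $u_i>0$ on $[X_i,+\infty)$ for some $X_i$.

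The delicate remaining step is to reduce the common threshold $X:=\max_i X_i$ down to $b/2$. Iterating once more the Duhamel identity, once $u_j>0$ on $(X^{(n)},+\infty)$ for every $j$, the integrand is positive on $\{s:2s>X^{(n)},\,\beta(2s)>0\}$; reading $b$ as the essential infimum of $\supp\beta$, this set essentially covers $(\max(X^{(n)}/2,b/2),+\infty)$, yielding $u_i>0$ there, i.e.\ $X^{(n+1)}:=\max(X^{(n)}/2,b/2)$. After finitely many halvings one reaches $X^{(n)}=b/2$, so $u_i>0$ on $(b/2,+\infty)$ for every $i$ and $\supp N_i=[b/2,+\infty)$. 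This halving iteration, which crucially requires irreducibility to operate simultaneously for every~$i$ and the fine behaviour of $\beta$ near its essential lower boundary~$b$, is the main obstacle of the proof.

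For positivity of $\phi$ the approach is symmetric. Rewrite the adjoint equation as $\phi_i'=\nu_i\phi_i-\eta_i$ with $\nu_i:=\lambda/\tau_i+\beta$ and $\eta_i:=2\beta\sum_j\kappa_{ij}\phi_j(\cdot/2)\geq 0$, so that $(e^{-\int\nu_i}\phi_i)'\leq 0$: positivity of $\phi_i$ now propagates \emph{backwards} in $x$. Starting from some $\phi_{i_0}$ positive on an interval---guaranteed by $\sum_i\int N_i\phi_i=1$ together with the positivity of $u_{i_0}$ established above---the analogous chain induction, invoking \ref{as:kappa_irr} through the row transitions $\kappa_{ij}$ (and the same halving trick on $x$), yields $\phi_i>0$ on $(0,+\infty)$ for every $i$. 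Finally, when $1/\tau_j\in L^1_0$, \ref{as:beta_L0} gives $\nu_j\in L^1_0$, so $\int_0^x\nu_j$ stays bounded as $x\to 0^+$; consequently $e^{-\int_0^x\nu_j}\phi_j(x)$ admits a finite non-negative limit at $0^+$ defining the continuous extension $\phi_j(0)$, and the backward monotonicity combined with positivity of $\phi_j$ on $(0,+\infty)$ yields $\phi_j(0)\geq e^{-\int_0^y \nu_j}\phi_j(y)>0$ for any $y>0$.
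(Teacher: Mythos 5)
Your proposal is correct and follows essentially the same route as the paper: your Duhamel-type monotone quantities are the paper's $F_i=\tau_iN_i\e^{\Lambda_i}$ and $G_i=\phi_i\e^{-\Lambda_i}$, your halving iteration $X^{(n+1)}=\max(X^{(n)}/2,\,b/2)$ repackages the paper's chain inequality $2\ell_i\le\max(b,\ell_j)$ for $j\in I_{\leftarrow}(i)$ together with the loop/case analysis, and the adjoint and $\phi_j(0)$ steps are likewise equivalent. Two cosmetic remarks: the iteration for $\phi$ is a \emph{doubling} (driving the sup of $\supp\phi_i$ to $+\infty$), not a halving, and the appeal to~\ref{as:beta_in_inf} in the chain step is superfluous---\ref{as:beta_support} alone already guarantees that $\beta$ is not a.e.\ zero on any subinterval of $[b,+\infty)$, which is all the positive-integrand argument requires.
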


Note that the support of the eigenvectors being of the form
$V \times X$, for $V$ and $X$ subsets of $\cV$ and $[0, + \infty)$,
respectively, is a consequence of the support of $\tau_i$ and
$\gamma_i$ being independent of $i$ (see~\ref{as:tau_positive}
and~\ref{as:tau-gamma}). The support of the direct eigenvector $N$ is
quite natural considering the following remarks.
\begin{itemize}[leftmargin=.7cm,parsep=0cm,itemsep=0.1cm,topsep=0cm]
\item \textit{Along the $x$-axis}. Under equal mitosis the smallest
  size a new-born cell can have, regardless its feature,
  is~$\frac{b}{2}$ since $b$ is the smallest size at which division
  can occur. Besides if $b>0$, any cell of feature $v_i$ and size
  $x < b$ (and $x>0$ otherwise it stays with size $x=0$ from the
  boundary condition $\tau_i(0) n_i(t,0) =0$) reaches size $b$ in
  finite time (namely
  \smash{$\int_x^{b} \scaleobj{.88}{\frac{1}{\tau_i} }$}) so all
  individuals will have at least size $\frac{b}{2}$ after some time
  (or keep size $0$). Likewise, the probability
  \smash{$\e^{-\int_s^x \beta(y) \dd{y}}$} to survive from size $s$ up
  to size~$x$ without dividing is positive for all $x$ so we expect
  arbitrarily large cells to be found in the population at large
  times.

\item \textit{Along the $v$-axis.} Because of the irreducibility
  condition~\ref{as:kappa_irr} any cell can transmit any feature after
  a finite number of divisions, so the population should occupy the
  whole set of features (and stay in it) in finite time.
\end{itemize}

As for the adjoint vector, the integrability condition on some
$\tau_j$ for $\phi_j(0)>0$ can be explained interpreting the equation
on $\psi$ (see~\ref{pb:GFtv_ad} later) as a backward equation
(where in a sense cells are shrinking and aggregating over
time). Then, we expect to find cells of feature $v_j$ reaching size 0
at large times, meaning $\phi_j(0)>0$, if and only if the time
\smash{$\int_0^x \! \frac{1}{\tau_j} \dd{x}$} to reach size $0$ from
size $x>0$ (in {the} absence of \enquote{aggregation}) is finite
--that is $\frac{1}{\tau_j} \in L_0^1$.

\begin{proof} We introduce for all $i$ in $\cI$, the sets
  \begin{equation}\label{def:Iarrow}
    I_{_\leftarrow}(i) \coloneqq \bigl\{ j \in \cI : \kappa_{ji} >0
    \bigr\}, \qquad
    I_{_\rightarrow}(i) \coloneqq \bigl\{ j \in \cI : \kappa_{ij} >0
    \bigr\}, 
  \end{equation}
  of the indices of all the features of cells that can have a daughter
  or a mother, respectively, with feature $v_i$. Note, that for every
  $i \in \cI$, none of them is empty thanks to~\ref{as:kappa_irr}.

  \textit{Direct eigenvector.} Integrating on $[0,x]$ the equation
  of~\ref{pb:GFv} satisfied by $N\geq 0$ brings: for all $i \in \cI$,
  for {all} $x$ in $(0, +\infty)$,
  \begin{equation}\label{eq:proof_positivity}
    \tau_i(x) N_i(x)
    \leq 2 \mint_0^{2x} \beta (s) \msum{j \in \cI}
    \big( \tau_j(s) N_j(s) \kappa_{ji}  \big) \dd{s},
  \end{equation}
  which makes clear that $\tau N \geq 0$ cancels on
  \smash{$\cV \times \big[0, \frac{b}{2} \big]$} since $\beta$ cancels
  on $[0,b]$ (see~\ref{as:beta_support}). It remains to prove that
  $\tau N$ is positive elsewhere. For some positive $x_0$ we define
  \begin{equation*}
    F_i(x) \coloneqq \tau_i(x) N_i(x) \e^{\Lambda_i(x)},
    \quad \Lambda_i(x) \coloneqq \mint_{x_0}^x \mfrac{\lambda
      + \gamma_i(s)}{\tau_i(s)} \dd{s},
    \quad \forall i \in \cI, \quad {\forall} x \in [x_0,+\infty ),
  \end{equation*}
  and obtain, through equation~\ref{pb:GFv} again, that for all
  $i$ and for {all} $x$ in $[x_0,+\infty )$
  \begin{equation*}
    \mdvx F_i(x) = 4  \msum{j \in \cI}
    \big( \gamma_j(2x) N_j(2x) \kappa_{ji} \big)
    \e^{\Lambda_i(x)} \, \geq \,  0.
  \end{equation*}
  This tells us for all $i$ in $\cI$, that if
  $\,\tau_i( \cdot) N_i( \cdot)$ becomes positive at some
  $x_1 \geq x_0$, then it stays positive on $[ x_1, + \infty)$.  Let
  thus introduce the smallest size of individuals with feature $v_i$
  at equilibrium
  \begin{equation*}
    \ell_i \coloneqq \inf_{(0,\infty)} \lb x : \tau_i(x) N_i(x) >0 \rb,
    \quad i \in \cI,
  \end{equation*}
  and prove that all the $\ell_i$, $i\in \cI$, are equal to
  \smash{$\frac{b}{2}$}. We already know that they {are} greater
  than \smash{$\frac{b}{2}$} so it only remains to prove that $\ell_i$
  is lower than \smash{$\frac{b}{2}$} for every $i$.

  For all $i \in \cI$, given that $x \mapsto \tau_i(x) N_i(x)$ cancels
  on $[0,\ell_i]$ and because $\tau_i$ can be bounded from below by a
  positive constant a.e. on any compact sets of $(0, +\infty)$ from
  \ref{as:tau_positive}, $N_i$ cancels a.e. on
  $(0,\ell_i]$. Integrating the direct equation of~\ref{pb:GFv}
  on~$(0,\ell_i)$ hence brings:
  \begin{equation*}
    0= \msum{j \in \cI} \mint_0^{\ell_i}  \gamma_j( 2x) N_j(2x)
    \kappa_{ji}  \dd{x}
    = \frac{1}{2} \msum{j \in I_{_{_\leftarrow}} \! (i)} 
    \mint_{\max(b, \ell_j)}^{2 \ell_i}  \beta(x) \tau_j(x) N_j(x) \kappa_{ji}  
    \dd{x}, \qquad  \forall i \in \cI,
  \end{equation*}
  with \emph{positive integrand} at the right-hand side which tells us
  that necessarily:
  \begin{equation}\label{eq:proof_suppN_in_x}
    \forall i \in \cI,
    \qquad  2 \ell_i \leq \max \bigl(b, \ell_j \bigr),
    \quad \forall j \in I_{_\leftarrow}(i).
  \end{equation}

  Now consider any $i_1$, $i_2$ in $\cI$.
  Assumption~\ref{as:kappa_irr} indicates that there exists
  $m = m_{i_1 i_2} \leq M$ such that $\kappa^m_{i_1 i_2} > 0$, meaning
  that we can find a path connecting $v_{i_1}$ to $v_{i_2}$ in the
  directed graph associated to $\kappa$, namely that there
  is~$(v_{p_1}, \ldots, v_{p_m}) \in \cV^{m}$ such that:
  \begin{equation*}
    \left\{
      \begin{aligned}
        &v_{p_1} = v_{i_1},
        \quad v_{p_m} = v_{i_2}, \\
        &\kappa(v_{p_i}, v_{p_{i+1}}) = \kappa_{p_i p_{i+1}} > 0, \;
        \ie \; p_{i+1} \in I ( p_{i} ),
        \quad \forall i \in \{1,\ldots, m-1\}.
    \end{aligned}
    \right. 
  \end{equation*}

  Applying~\eqref{eq:proof_suppN_in_x} iteratively to the $p_i$, for
  $i$ from $1$ to $m$, we obtain: $\forall i \in \{1, \ldots, m-1 \}$
  \begin{equation}\label{eq:proof_ineq_on_l}
    \ell_{i_1} \leq \mfrac{1}{2} \max \bigl(b, \mfrac{\ell_{p_i}}{2^i} \bigr)
    \leq \mfrac{1}{2} \max \bigl(b, \mfrac{\ell_{p_{i+1}}}{2^{i+1}} \bigr)
    \leq \mfrac{1}{2} \max \bigl(b, \mfrac{\ell_{i_2}}{2^m} \bigr).
  \end{equation}
  
  The inequality tells us that as soon as $v_{i_1}$ and $v_{i_2}$
  communicate ($v_{i_1}$ leads to $v_{i_2}$ and $v_{i_2}$ to $v_{i_1}$
  in the graph of $\kappa$), then $\ell_{i_1}$ and $\ell_{i_2}$ are
  both either infinite or inferior or equal to $\frac{b}{2}$ --it
  suffices to apply~\eqref{eq:proof_ineq_on_l} to $({i_2},{i_1})$ and
  $({i_2},{i_1})$, distinguishing between cases
  $\ell_{i_2} \leq 2^m b$ or $\ell_{i_2} > 2^mb$, and $\ell_{i_1}$
  lower or greater than $2^{m_{i_2 i_1}}b$, respectively. However the
  irreducibility condition~\ref{as:kappa_irr} implies that all the
  features communicate, so we deduce that all the $\ell_i$,
  $i \in \cI$, are either infinite --this would imply that all the
  $N_i$ are null-functions on $\R_+$ which is immediately excluded by
  the normalization condition on $N$-- or lower or equal
  to~$\frac{b}{2}$.

  \textit{Adjoint eigenvector.} Following the same idea we define
  $G_i \coloneqq \phi_i \e^{-\Lambda_i}$ on~$(x_0,+\infty)$, for $x_0$
  positive, and compute for all $i \in \cI$ and for {all}
  $x \in (x_0,+\infty)$:
  \begin{equation}\label{eq:proof_dG}
    \mdvx G_i(x) = -2 \beta  (x) \msum{j \in \cI} \kappa_{ij}
    \phi_j \Big( \mfrac{x}{2} \Big) \e^{-\Lambda_i(x)} \leq 0,
  \end{equation}
  so, as soon as $\phi_i$ vanishes it remains null, plus $G_i$ is
  constant on $(0,b]$ for every $i \in \cI$.

  We prove that for all $i \in \cI$, $\phi_i$ never reaches
  zero. Again, we introduce:
  \begin{equation*}
    k_i \coloneqq \sup_{[0,+\infty]} \bigl\{ x : \phi_i(x) >0
    \bigr\}, \quad i \in \cI.
  \end{equation*}
  By definition, $\phi_i$ is zero on $[k_i, +\infty)$ for any $i$ in
  $\cI$, therefore so are $G_i$ and \smash{$\mdvx G_i$}.  Thus
  integrating~\eqref{eq:proof_dG} on $[k_i, +\infty)$ yields after
  straightforward computations, similar to those done for the direct
  eigenvector, that:
  \begin{equation*}
    \msum{j \in I_{_{_\rightarrow}} \!(i)} \kappa_{ij}
    \mint_{\max \bigl( \frac{b}{2}, \frac{k_i}{2} \bigr)}^{k_j}
    \beta (2x) \phi_j(x) \e^{-\Lambda_i(2x)} \dd{x} = 0,
    \qquad i \in \cI,
  \end{equation*}
  with \emph{positive integrand} which leads to:
  \begin{equation}\label{eq:proof_max_phi}
    \forall i \in \cI, \qquad  k_j \leq  \max \Bigl( \mfrac{b}{2},
    \mfrac{k_i}{2} \Bigr), \quad \forall j \in I_{_\rightarrow}(i).
  \end{equation}
  Same arguments based on~\eqref{eq:proof_max_phi}
  and~\ref{as:kappa_irr} hold and allow us to deduce that the $k_i$,
  $i \in \cI$, are either all infinite or all inferior or equal to
  $\frac{b}{2}$. The first option is contradicted by the normalization
  condition on $\phi$. As for the second one, it proves impossible
  unless all $k_i$ are zero. Indeed if $0<k_i \leq \frac{b}{2}$ for
  some $i \in \cI$, we take $x_0 \in (0, k_i)$ and get $G_i$ constant
  on $[x_0, k_i]$ (constant on $[x_0, b]$) equal to $G_i( k_i)=0$
  which contradicts the definition of $k_i$.

  Assume now that \smash{$\frac{1}{\tau_j} \in L_0^1$} for some
  $j \in \cI$. Then we can take $x_0 = 0$ in the definition of $G_i$
  and $\phi_j(0) = 0$ would imply that $\phi_j$ is zero everywhere,
  i.e.  $k_i = +\infty$ which we said is impossible, so
  $\phi_j(0) >0$.
\end{proof}

A few remarks on the proof:
\begin{itemize}[leftmargin=.7cm,parsep=0cm,itemsep=0.1cm,topsep=0cm]
\item This proof also works in the case where the graph associated to
  $\kappa$ is not strongly connected but decomposes in strongly
  connected components partitioned, say in~$(\cV_1, \ldots, \cV_p)$
  associated to a partition $(\cI_1, \ldots, \cI_p)$ of $\cI$. In this
  case, inequality~\eqref{eq:proof_ineq_on_l} implies that the
  property of $i \mapsto \ell_i$ to be either infinite or upper
  bounded by \smash{$\frac{b}{2}$} is valid on each of the $\cI_i$,
  and the normalization condition on $N$ then
  yields~\smash{$\ell \leq \frac{b}{2}$} on at least one $\cI_i$, but
  $\ell$ possibly infinite elsewhere (meaning $N$ null outside
  {of}~\smash{$\cV_i \times \big[\frac{b}{2}, +\infty \big)$}).

\item Likewise, equation~\eqref{eq:proof_positivity} tells us that if
  there was a feature $v_i$ that somehow could not be transmitted over
  generations, this is $I_{_\leftarrow}(i)=\emptyset$, then
  $\tau_i N_i$ would be zero on~$\R_+$.
\end{itemize}

\subsection{General relative entropy}
\label{ssec:GRE}

The GRE principle (see~\cite{perthame_transport_2007,
  michel_general_2004, michel_general_2005}) holds in the variability
setting and can be stated under assumption~\ref{as:V_continuous} of
$\cV$ continuous, so we first keep integral notation for the $v$
variable. Likewise, we start with a general abstract setting which
does not involve the eigenelements yet but a solution to the dual
equation given, for $\cF^*$ defined by~\eqref{eq:SV_adjoint_opp}, as:
\begin{equation*}\label{pb:GFtv_ad}
  \tag*{\textnormal{(GF$^*_{\! t,v}$)}}
  \mpdvt \psi(t,v,x)+ \tau(v,x) \mpdvx\psi(t,v,x) = -\cF^* \psi(t,v,x).
\end{equation*}

We adopt the compact notation $u_t(v,x) \coloneqq u(t,v,x)$.

\begin{definition}[GRE]
  We define the \emph{general relative entropy} of $n$ with respect to
  $p$ by
  \begin{equation*}
    E^H[n | p](t) \coloneqq \miint_\cS \psi_t(v,x) p_t(v,x) H \Bigl(
    \mfrac{n_t(v,x)}{p_t(v,x)} \Bigr) \, \dd v \dd x,
  \end{equation*}
  with $H \colon \R \rightarrow \R$ a convex function, $\psi$ weak
  solution to~\ref{pb:GFtv_ad} and $n$ and $p$ such that the
  integral is defined.
\end{definition}

\begin{prop}[GRE principle]\label{prop:GRE_principle}
  Assume~\ref{as:tau-gamma}, \ref{as:beta_support}
  and~\ref{as:kappa_cv} are satisfied and $n$ and $p$ are weak
  solutions to~\ref{pb:GFtv}, and $\psi$
  to~\ref{pb:GFtv_ad}, with $p \geq 0$ and $\psi$ such that
  $\supp( \psi_t) \cap \cV \times \big[\tfrac{b}{2}, +\infty \big)
  \subset \supp(p_t)$ and $\psi_t p_t \in L^1(\cS)$ for all positive
  $t$. Then, the \emph{entropy dissipation} $D^H[n | p]$ of $n$
  w.r.t. $p$, defined by
  \begin{equation*}
    \tfrac{\dd}{\dd{t}} E^H[n | p](t)
    \coloneqq -D^H[n | p](t),
  \end{equation*}
  can be expressed, for any $H \colon \R \rightarrow \R$ convex, as
  \begin{multline*}
    D^H[n | p](t) =-
    4 \miint_\cS \psi_t(v,x) \mint_\cV \kappa(v',v) \gamma(v', 2x) p_t(v', 2x)\\
    \! \times \! \lp H' \Big(\mfrac{n_t(v,x)}{p_t(v,x)} \Big) \lc
    \mfrac{n_t(v',2x)}{p_t(v',2x)} - \mfrac{ n_t(v,x)}{p_t(v,x)} \rc -
    \Big[ H \Big( \mfrac{n_t(v',2x)}{p_t(v',2x)} \Big) -H \Big(
    \mfrac{n_t(v,x)}{p_t(v,x)} \Big) \Big] \rp \, \dd v' \dd v \dd x.
  \end{multline*}

  In particular, for $H$ convex and $\psi \geq 0$ the general relative
  entropy is decreasing since $D^H[n | p ](t) \geq 0$, and for
  $H$ strictly convex and $\psi$ positive we have
  \begin{equation*}
    D^H[n | p ](t) = 0 \quad \Longleftrightarrow \quad
    \mfrac{n_t(v',2x)}{p_t(v',2x)} = \mfrac{ n_t(v,x)}{p_t(v,x)},
    \quad (v',v,x) \in \supp \bigl(\kappa(v',v) \gamma(v', 2x) \bigr).
  \end{equation*}
\end{prop}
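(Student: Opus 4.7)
The plan is to introduce the shorthand $u_t \coloneqq n_t / p_t$, well-defined on $\supp(p_t)$, and to differentiate $E^H[n | p](t)$ under the integral. Applying the chain rule together with the identity $\mpdvt u_t = \big( \mpdvt n_t - u_t \mpdvt p_t \big)/p_t$ gives
\begin{equation*}
  \mpdvt \big( \psi_t p_t H(u_t) \big) = (\mpdvt \psi_t) \, p_t H(u_t) + (\mpdvt p_t) \, \psi_t \big[ H(u_t) - u_t H'(u_t) \big] + \psi_t H'(u_t) \, \mpdvt n_t.
\end{equation*}
I would then substitute the PDE \ref{pb:GFtv} for $n$ and $p$ and the dual PDE \ref{pb:GFtv_ad} for $\psi$, and split the right-hand side into a transport part (from $-\mpdvx(\tau \cdot)$ and $\tau \mpdvx$) and a fragmentation part (from $\cF$ and $\cF^*$).

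For the transport contributions, I would integrate by parts in $x$, using $\tau(\cdot,0) n_t(\cdot,0) = \tau(\cdot,0) p_t(\cdot,0) = 0$ at the boundary $x = 0$ and the integrability $\psi_t p_t \in L^1(\cS)$ to discard the contribution at infinity. Using $\mpdvx[H(u) - u H'(u)] = -u H''(u) \mpdvx u$ and $n = u p$, the three resulting terms regroup so that the $H''$-pieces cancel and what remains reduces to $\iint_\cS \tau \, \mpdvx \psi \cdot H'(u)(n - u p) \, \dd v \dd x = 0$. Thus the whole transport block contributes nothing to the entropy dissipation.

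For the fragmentation contributions, write $\cF = \cF^- + \cF^+$ and $\cF^* = \cF^{*-} + \cF^{*+}$ with $\cF^- f = -\gamma f$, $\cF^{*-} h = -\gamma h$. The three "loss" pieces combine algebraically into $\iint_\cS \gamma \psi H'(u)(u p - n) \, \dd v \dd x$ and cancel. For the "gain" pieces, the contribution involving $\cF^{*+}$ carries the kernel $\gamma(v,x) \kappa(v,v') \psi_t(v', x/2)$; the change of variable $x \mapsto 2x$ (producing a factor $2$ that combines with the $2$ in $\cF^{*+}$ to yield the global $4$) followed by the relabeling $v \leftrightarrow v'$ puts it under the same integration pattern as the two $\cF^{+}$ contributions, all three now weighted by $\psi_t(v,x) \kappa(v',v) \gamma(v',2x) p_t(v',2x)$. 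Using $n(v',2x) = u_t(v',2x) p_t(v',2x)$, the bracketed combination becomes precisely $H'(u_t(v,x))[u_t(v',2x) - u_t(v,x)] - [H(u_t(v',2x)) - H(u_t(v,x))]$, which is the announced formula.

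The sign statement then follows from convexity: the inequality $H(b) - H(a) \geq H'(a)(b - a)$ makes the quantity in curly braces non-positive, and with $\psi \geq 0$ the global minus sign yields $D^H[n | p](t) \geq 0$. Strict convexity makes this inequality strict whenever $u_t(v',2x) \neq u_t(v,x)$, so with $\psi > 0$ the vanishing $D^H = 0$ forces $u_t(v',2x) = u_t(v,x)$ on $\supp\big(\kappa(v',v) \gamma(v',2x)\big)$. The main obstacle I expect is not algebraic---the cancellations follow the standard GRE template of Michel, Mischler and Perthame extended to the variability kernel---but rigorous: one must use the assumed support condition $\supp(\psi_t) \cap \cV \times [\tfrac{b}{2}, +\infty) \subset \supp(p_t)$ to make sense of $u_t$ wherever it appears against $\psi_t$, and one must check that the regularity of weak solutions to \ref{pb:GFtv} and \ref{pb:GFtv_ad} suffices to differentiate under the integral and to perform the integration by parts without boundary contributions at infinity.
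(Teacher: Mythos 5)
Your proposal is correct and follows essentially the same route as the paper: the paper first derives the transport equation satisfied by $n_t/p_t$ and then by $H(n_t/p_t)$ before combining with the equation for $\psi_t p_t$, whereas you expand $\mpdvt\bigl(\psi_t p_t H(n_t/p_t)\bigr)$ directly and substitute the three PDEs, but the cancellations, the change of variables $x \mapsto 2x$ with relabeling $v \leftrightarrow v'$ producing the factor $4$, and the convexity argument for the sign are identical. The only caveat is that your description of the transport block is slightly imprecise (after the $H''$-cancellation the surviving term is $\tau p\,(\mpdvx\psi)\,H(u)$, which cancels against the transport part of the $\psi$-equation, rather than a term proportional to $n-up$), but the conclusion that the whole block is an exact $x$-derivative integrating to zero is correct and matches the paper.
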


\begin{proof} The standard computation, see
  e.g.~\cite[p.93]{perthame_transport_2007}, is not affected by the
  additional variable $v$, we only need to make sure quantities are
  well defined since $p$ here is not necessarily positive everywhere
  --we rely on $\supp( \gamma) \coloneqq \cV \times [b, +\infty)$
  from~\ref{as:tau-gamma} and~\ref{as:beta_support}.

  Using equation~\ref{pb:GFtv} verified by both $n$ and $p$ we
  obtain
  \begin{equation}\label{eq:proof_GRE_n/p}
    \begin{aligned}
      \mpdvt \Big( \mfrac{n_t(v,x)}{p_t(v,x)} \Big) + \tau(v,x) \mpdvx
      \Big( \mfrac{n_t(v,x)}{p_t(v,x)} \Big) &= \Big[ \mfrac{1}{p_t^2}
      \Big( p_t \big( \mpdvt n_t + \mpdvx (\tau n_t) \big)
      -n_t \big( \mpdvt p_t + \mpdvx (\tau p_t) \big) \Big)\Big](v,x)\\
      &=4\mint_{\cV} \mfrac{p_t(v',2x)}{p_t(v,x)} \kappa (v',v)
      \gamma(v',2x) \lc \mfrac{n_t(v',2x)}{p_t(v',2x)}
      - \mfrac{ n_t(v,x)}{p_t(v,x)} \rc \dd{v'}.\\
    \end{aligned}
  \end{equation}
  Multiplying by $H'\big( \frac{n_t(v,x)}{p_t(v,x)} \big) $, we find
  \begin{equation}\label{proof:eq_A}
    \begin{aligned}
      \mpdvt H\big( \tfrac{n_t}{p_t} \big) +\tau \mpdvx H\big(
      \tfrac{n_t}{p_t} \big) &=4 \mint_{\cV}
      \mfrac{p_t(v',2x)}{p_t(v,x)} \kappa (v',v) \gamma(v',2x) H'\Big(
      \mfrac{n_t(v,x)}{p_t(v,x)} \Big)\lc
      \mfrac{n_t(v',2x)}{p_t(v',2x)}
      - \mfrac{ n_t(v,x)}{p_t(v,x)} \rc \dd{v'}\\
      &\hspace{-.1cm}\coloneqq A_t(v,x).
    \end{aligned}
  \end{equation}
  Besides, using the equations on $\psi$ and $p$ we have
  \begin{equation*}
    \begin{aligned}
      \mpdvt \big( \psi_t p_t \big)+\mpdvx \big(\tau\psi_t p_t \big) =
      \big( \mpdvt \psi_t + \tau \mpdvx \psi_t \big) p_t + \big(
      \mpdvt p_t + \mpdvx (\tau p_t) \big) \psi_t = -\cF^*(\psi_t) p_t
      + \cF (p_t) \psi_t,
    \end{aligned}
  \end{equation*}
  and thus 
  \begin{equation*}
    \begin{aligned}
      \mpdvt \Big( \psi_t p_t H\big( \tfrac{n_t}{p_t} \big)
      \Big)+\mpdvx \Big(\tau\psi_t p_t H\big( \tfrac{n_t}{p_t} \big)
      \Big) = \Big( \cF(p_t) \psi_t -\cF^*(\psi_t) p_t \Big) H\big(
      \tfrac{n_t}{p_t} \big) + \psi_t p_t A_t.
    \end{aligned}
  \end{equation*}
  It then remains to integrate for $(v,x)$ in $\cS$ to get an
  expression of the entropy dissipation. \textcolor{black}{The
    $x$-derivative term cancels} and we obtain
  \begin{equation}\label{eq:proof_dissip}
    \tfrac{\dd}{\dd{t}} \miint_\cS \psi_t \hspace{.1pt}p_t H \big(
    \tfrac{n_t}{p_t} \big)
    \, \dd v \dd x = \underbrace{\miint_\cS \Big( \cF (p_t) \psi_t
      -\cF^*(\psi_t) p_t \Big) H\big( \tfrac{n_t}{p_t} \big)\, \dd v
      \dd x}_{\coloneqq I_t} + \miint_\cS \psi_t p_t A_t \, \dd v \dd x
  \end{equation}
  that is {well defined} from the assumption on the support of
  $\psi$, $p$ and the expression of the support of $\gamma$.  The
  first part $I_t$ of the dissipation is handled through a change of
  variables in its second term in order to factorize by
  $\cF (p_t)(v,x) \psi_t(v,x)$:
  \begin{equation*}
    I_t = 4 \miint_\cS \! \mint_\cV \kappa(v',v) \psi_t(v,x)
    \gamma(v',2x) p_t(v', 2x) \Big[ H \Big( \mfrac{n_t(v,x)}{p_t(v,x)}
    \Big) -H \Big( \mfrac{n_t(v',2x)}{p_t(v',2x)} \Big) \Big] \, \dd
    v' \dd v \dd x,
  \end{equation*}
  which, combined with~\eqref{proof:eq_A} and~\eqref{eq:proof_dissip},
  brings the expected formulation.
  
  The inequality in the case $H$ convex (strictly convex) follows from
  the non-negativity (positivity) of $\phi$ and $p$, $\gamma$ and
  $\kappa$, and a classical inequality of convexity.
\end{proof}

As direct consequence of \Cref{prop:GRE_principle} we have the
following classical properties.
\begin{prop}\label{prop:contraction}
  Assume hypothesis of \Cref{thm:existence_eig} satisfied and let
  $(\lambda,N,\phi)$ be a solution to~\ref{pb:GFv}. Assume
  $n \in \sC \big(\R_+, L^1(\cS; \phi \, \dd v \dd x) \big)$ is a
  solution to~\ref{pb:GFtv}. Then, setting
  $\, \tilde{n}\coloneqq n \e^{-\lambda t}$ we have:
  $\forall t \geq 0$
  \begin{enumerate}[label=\roman*),leftmargin=*, parsep=0cm,
    itemsep=.2cm, topsep=0cm]
  \item (Conservation law).
    $\miint_\cS \tilde{n}_t(v,x) \phi(v,x) \, \dd v \dd x = \miint_\cS
    n^{in}(v,x) \phi(v,x) \, \dd v \dd x$.
  \item (Contraction principle).
    $\miint_\cS \big\lvert \tilde{n}_t(v,x) \big\rvert \phi(v,x) \,
    \dd v \dd x \leq \miint_\cS \big\lvert n^{in}(v,x) \big\rvert
    \phi(v,x) \, \dd v \dd x$.

    More precisely,
    $s \mapsto \lp \iint_\cS \abs{ \tilde{n}_s } \phi \rp$ is a
    decreasing function of time.
    
  \item \vspace{.2cm}(Maximum Principle).
    $\abs{ n^{in} (v,x) } \leq C N(v,x) \Longrightarrow \abs{
      \tilde{n}_t (v,x) } \leq C N(v,x)$.
  \end{enumerate}
\end{prop}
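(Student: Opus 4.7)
The plan is to apply the GRE principle (\Cref{prop:GRE_principle}) three times, with three distinct convex functions $H$, against a fixed reference pair $(p,\psi)$. First, I would set $p_t(v,x) := N(v,x)\e^{\lambda t}$ and $\psi_t(v,x) := \phi(v,x)\e^{-\lambda t}$. A direct substitution using that $(\lambda,N,\phi)$ solves~\ref{pb:GFv} shows that $p$ solves~\ref{pb:GFtv} and $\psi$ solves~\ref{pb:GFtv_ad}. The compatibility hypotheses of \Cref{prop:GRE_principle} are in place: by \Cref{prop:positivity}, $\supp N = \cV \times \bigl[\tfrac{b}{2}, +\infty\bigr)$ and $\phi > 0$ on $\cV \times (0,+\infty)$, so $\supp\psi_t \cap \cV \times \bigl[\tfrac{b}{2},+\infty\bigr) \subset \supp p_t$; and $\psi_t p_t = \phi N \in L^1(\cS)$ by the normalization $\iint_\cS N\phi = 1$. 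With these choices the entropy takes the simple time-homogeneous form $E^H[n\,|\,p](t) = \iint_\cS \phi N\, H(\tilde{n}_t/N)$, and the dissipation formula of \Cref{prop:GRE_principle} involves $n_t/p_t = \tilde{n}_t/N$.

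For item \textit{i)}, the linear choice $H(y)=y$ makes the bracket in the formula for $D^H$ cancel identically, so $E^H$ is constant in time, which reads $\iint_\cS \phi\, \tilde{n}_t = \iint_\cS \phi\, n^{in}$. For \textit{ii)}, I would take $H(y)=|y|$, which is convex; combined with $\phi \geq 0$, \Cref{prop:GRE_principle} yields $D^H \geq 0$, hence $t \mapsto \iint_\cS \phi |\tilde{n}_t|$ is non-increasing, giving both the bound and the monotonicity claimed in \textit{ii)}. The non-smoothness of $|\cdot|$ at zero is handled by the usual smooth convex approximation (for instance $\sqrt{y^2+\varepsilon^2}$) followed by dominated convergence.

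For item \textit{iii)}, I would take $H(y)=(y-C)_+^2$, which is $\sC^1$, convex and non-negative, and vanishes on $(-\infty,C]$. The hypothesis $|n^{in}| \leq CN$ forces $(n^{in}/N-C)_+ \equiv 0$ on $\supp N$, hence $E^H(0) = 0$. By \Cref{prop:GRE_principle}, $E^H(t) \leq 0$; the integrand being pointwise non-negative, this gives $E^H(t) \equiv 0$, so $\tilde{n}_t \leq CN$ on $\supp(\phi N) = \supp N$. The symmetric choice $H(y)=(-y-C)_+^2$ yields $\tilde{n}_t \geq -CN$. The one real subtlety, and the step I expect to be the main obstacle, is extending these bounds to $\cV \times \bigl[0,\tfrac{b}{2}\bigr)$ where $N \equiv 0$ and the GRE argument is silent: there~\ref{pb:GFtv} reduces to the pure transport equation on $\tilde{n}$ with no fragmentation term, because $\gamma = \beta\tau \equiv 0$ on $[0,b]$ by~\ref{as:beta_support}; combined with $n^{in} \equiv 0$ on this region (forced by $|n^{in}| \leq CN = 0$) and the boundary condition $\tau(\cdot,0)\tilde{n}(t,\cdot,0) = 0$, uniqueness along characteristics gives $\tilde{n}_t \equiv 0$ there, so the pointwise bound $|\tilde{n}_t| \leq CN$ holds on all of $\cS$.
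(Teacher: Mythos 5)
Your proposal is correct and follows essentially the same route as the paper: apply \Cref{prop:GRE_principle} to $p_t = N\e^{\lambda t}$, $\psi_t = \phi\e^{-\lambda t}$ with $H(u)=u$ (and $-u$), $H(u)=\abs{u}$, and a quadratic positive-part entropy, exactly as the authors do. Your final paragraph on extending the maximum principle to $\cV \times \bigl[0,\tfrac{b}{2}\bigr)$, where $N\equiv 0$ and the entropy identity is silent, is a detail the paper's proof glosses over, and your transport argument there is sound.
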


\begin{proof}
  From \Cref{prop:positivity}, $\phi$ is positive on $\cS$ and
  $\supp(N) = \cV \times [ \frac{b}{2},
  +\infty)$. \Cref{prop:GRE_principle} thus applies to
  $p_t \coloneqq N \e^{\lambda t}$ and
  $\psi_t \coloneqq \phi \e^{-\lambda t}$, respective solutions
  to~\ref{pb:GFtv} and \ref{pb:GFtv_ad}, and for
  {well-chosen} \emph{convex} entropy functions we retrieve:
  \begin{enumerate}[label=~~\textit{\roman*)}, wide, parsep=0cm,
    itemsep=0.1cm, topsep=0cm]
  \item the conservation law, after taking $H(u)=u$ and $-H$,
    
  \item the contraction principle from choosing $H(u)=\abs{u}$,

  \item and the maximum principle from
    \smash{$H(u)=(\abs{u}-C)^{2}_+$}. Indeed, $\abs{n^{in}}\leq C N$
    then~implies \smash{$H(\frac{n^{in}}{N})=0$}. Therefore,
    $E^{\texp{H}}[n^{in}|N](0)=0$ and because the entropy decays (from
    $D^{\texp{H}}[\tilde{n}|N ](t) = D^{\texp{H}}[n|N \e^{\lambda t}
    ](t) \geq 0$) we have that for all times $t$,
    $E^{\texp{H}}[\tilde{n}|N](t) = \int \! \! \! \int_\cS \phi N H
    \big( \frac{\tilde{n}_t}{N} \big)$ is 0, and deduce the expected
    result.
  \end{enumerate}\vspace{-.4cm}
\end{proof}

Before going further we establish two useful results, also derived
from the GRE and later needed to establish convergence.
\begin{prop}[BV-regularity]\label{prop:BV_reg}
  Assume that the assumptions of \Cref{thm:existence_eig} are
  satisfied such that there exists a solution $(\lambda,N,\phi)$
  to~\ref{pb:GFv} with $\lambda >0$.  Assume that
  $n \in \sC \big(\R_+, L^1(\cS; \phi \, \dd v \dd x) \big)$ is
  a weak solution to~\ref{pb:GFtv} with initial data
  satisfying
  \begin{equation*}
    \abs{n^{in}} \leq C N, \qquad \mpdvx \big( \tau  n^{in} \big) 
    \in L^1 \big(\cS; \phi \, \dd v \dd x \big).
  \end{equation*}
  Then, for $\tilde{n} \coloneqq n \e^{- \lambda t}$ and $C(n^{in})$ a
  general constant depending on $n^{in}$, we have: $\forall t \geq 0$
  \begin{equation*}
    \mint_0^\infty \abs{\mpdvt \tilde{n}_i(t, x)} \phi_i(x) \dd{x}
    \leq C(n^{in}), \quad
    \mint_0^\infty \abs{\mpdvx \big( \tau_i(x) \tilde{n}_i(t,x) \big)} \phi_i(x) \dd{x}
    \leq C(n^{in}), \quad i \in \cI.
  \end{equation*}
\end{prop}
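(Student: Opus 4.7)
The plan is to apply the contraction principle of Proposition~\ref{prop:contraction}~\textit{ii)} to the time derivative of $\tilde n$, leveraging the autonomous and linear character of~\ref{pb:GFtv}, and then to deduce the spatial-derivative bound by reading it off the equation.

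First I would note that $\tilde n$ satisfies the rescaled equation
\begin{equation*}
  \mpdvt \tilde n + \mpdvx(\tau \tilde n) + \lambda \tilde n
  = \cF(\tilde n), \qquad \tilde n(0,\cdot,\cdot) = n^{in},
\end{equation*}
whose coefficients are independent of $t$. Consequently, the formal time derivative $u \coloneqq \mpdvt \tilde n$ solves the same equation with initial datum read off the equation at $t=0$:
\begin{equation*}
  u^{in} = - \mpdvx(\tau n^{in}) - \lambda n^{in} + \cF(n^{in}).
\end{equation*}
Rigorously, this is made precise by considering the finite differences $u_h(t) \coloneqq h^{-1}(\tilde n_{t+h} - \tilde n_t)$, which by linearity and autonomy are genuine weak solutions of the rescaled problem, and passing to the limit $h \to 0$. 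Applying Proposition~\ref{prop:contraction}~\textit{ii)} to $u_h$ yields $\miint_\cS \abs{u_h(t)} \phi \leq \miint_\cS \abs{u_h(0)} \phi$, so the first desired bound follows once we establish that $u^{in} \in L^1(\cS;\phi\,\dd v\dd x)$.

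To this end I would bound each contribution of $u^{in}$ separately. The term $\mpdvx(\tau n^{in})$ lies in $L^1(\phi)$ by assumption; the term $n^{in}$ satisfies $\miint_\cS \abs{n^{in}} \phi \leq C \miint_\cS N \phi = C$ from the maximum-principle hypothesis $\abs{n^{in}} \leq CN$ and the normalization of $(N,\phi)$; and for the fragmentation term, the same pointwise bound yields
\begin{equation*}
  \abs{\cF(n^{in})}(v,x) \leq C \gamma(v,x) N(v,x)
  + 4 C \mint_\cV \kappa(v',v) \gamma(v',2x) N(v',2x) \dd{v'}
  = C \bigl( \cF(N) + 2 \gamma N \bigr)(v,x),
\end{equation*}
which, together with the eigenequation $\cF(N) = \mpdvx(\tau N) + \lambda N$, reduces matters to the finiteness of $\miint_\cS \gamma N \phi$, $\miint_\cS N \phi$ and $\miint_\cS \mpdvx(\tau N) \phi$ --- each of which is guaranteed by the polynomial growth of $\phi$ and the moments and $W^{1,1}$-regularity of $\tau N$ provided by Theorem~\ref{thm:existence_eig} (after an integration by parts in the last one).

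For the spatial bound, I would rearrange the equation as
\begin{equation*}
  \mpdvx(\tau \tilde n_t) = -\mpdvt \tilde n_t
  - \lambda \tilde n_t + \cF(\tilde n_t),
\end{equation*}
take absolute values and integrate against $\phi$: the first right-hand side term is handled by the previous step, the second by Proposition~\ref{prop:contraction}~\textit{ii)} applied directly to $\tilde n$, and the third by the same pointwise estimate above, now applied to $\tilde n_t$ via the maximum principle $\abs{\tilde n_t} \leq CN$ of Proposition~\ref{prop:contraction}~\textit{iii)}. The main obstacle is the rigorous passage to the limit $h \to 0$ at the weak-solution level: one has to check that the $L^1(\phi)$-bound on $u_h(0)$ is uniform in $h$ (which reduces exactly to the $L^1(\phi)$-estimate on $u^{in}$ established above) and that the limit can be identified with $\mpdvt \tilde n$ in the sense required. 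Everything else in the scheme is a direct application of estimates already available from Proposition~\ref{prop:contraction} and Theorem~\ref{thm:existence_eig}.
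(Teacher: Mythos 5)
Your proposal is correct and follows essentially the same route as the paper: differentiate the rescaled equation in time, apply the contraction principle of \Cref{prop:contraction} to $q=\mpdvt\tilde n$, control $q(0,\cdot)$ by bounding $\abs{n^{in}}$ by $CN$ and substituting the eigenequation for the fragmentation term, then read the size-derivative bound off the equation using the time-derivative estimate and the maximum principle. Your finite-difference justification of ``differentiating in time'' is a minor technical refinement of what the paper does implicitly; otherwise the two arguments coincide.
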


\begin{proof} \textit{Time derivative.} The renormalization
  of~\ref{pb:GFtv}, satisfied by $\tilde{n}$, writes as:
  \begin{equation} \label{eq:proof_dynamic_renormalized}
    \mpdvt \tilde{n}_i(t,x) %
    + \mpdvx\big( \tau_i(x) {\tilde{n}}_i(t,x) \big) %
    + \big( \lambda  + \gamma_i(x) \big) \tilde{n}_i(t,x)
    = 4 \msum{j \in \cI} \gamma_j(2x) \tilde{n}_j(t,2x) \kappa_{ji}.
  \end{equation}
  Differentiating it in time yields to the same equation verified by
  $q = \mpdvt \tilde{n}$, to which we can apply the contraction
  principle of \Cref{prop:contraction} and derive:
  \begin{equation*}
    I_t \coloneqq \msum{i \in \cI} \mint_0^\infty \abs{q_i(t,x)}
    \phi_i(x) \dd{x}
    \leq \msum{i \in \cI} \mint_0^\infty \abs{q_i(0,x)} \phi_i(x) \dd{x}.
  \end{equation*}
  Evaluating at $t=0 \,$ the equation verified by $\tilde{n}$, we have
  \begin{equation*}
    q_i(0,x) = - \mpdvx \big( \tau_i(x) n^{in}_i(x) \big)
    - \big(\lambda + \gamma_i(x)
    \big) n^{in}_i(x) +  4  \msum{j \in \cI} \gamma_j(2x)
    n^{in}_j(2x) \kappa_{ji},
  \end{equation*}
  whose absolute value can be controlled, bounding $\abs{n^{in}}$ by
  $C N$ and then replacing \hphantom{by}
  $\sum_j \gamma_j(2x) N_j(2x) \kappa_{ji}$ by the other terms of the
  equation on $N$, to obtain:
  \begin{equation*}
    I_t \leq \msum{i \in \cI} \mint_0^\infty \! \phi_i \Big(
    \abs{\mpdvx \big( \tau_i n^{in}_i \big)} + C \abs{\mpdvx \big(
      \tau_i
      N_i \big) } \Big)
    + 2 \lambda C + 2C \msum{i \in \cI} \mint_0^\infty \gamma_i
    N_i \phi_i.
  \end{equation*}
  The term in $n^{in}$ is controlled by assumption, the other ones
  thanks to the estimates on $(N, \phi)$ provided by
  \Cref{thm:existence_eig}, using in particular that
  \begin{equation*}
    \msum{i \in \cI} \mint_0^\infty \gamma_i(x)
    N_i(x) \phi_i(x) \dd{x}
    \leq \Norm{ \mfrac{\phi}{1+x^k}}{L^\infty(\cS)}
    \bigNorm{ (1+ x^k) \gamma N}{L^1(\cS)}.
  \end{equation*}

  \textit{Size derivative.} Starting again from
  \eqref{eq:proof_dynamic_renormalized} we use the estimate on the
  time derivative just obtained, the maximum principle of
  \Cref{prop:contraction} ($\abs{\tilde{n}_i(t,x)} \leq C N(x)$) and
  the estimates on $N$ to control the size derivative as expected.
\end{proof}

\begin{lemma}[GRE minimizers]\label{lemma:GRE_minimizer} Assume
  $n$,~$p$, $\psi$ and $H$ satisfy assumptions
  of~\Cref{prop:GRE_principle}, with $\psi$ positive and $H$ strictly
  convex. Assume~\ref{as:V}, \ref{as:kappa_irr}, \ref{as:tau_hetero},
  \ref{as:tau-gamma} and \ref{as:beta_support} satisfied. Then
  \begin{equation*}
    D^H [ n | p ] \equiv 0 \quad \Longleftrightarrow
    \quad \exists C \in \R \; : \; \mfrac{n_i(t,x)}{p_i (t,x)} =C,
    \quad  \forall t>0, \; \;
    \forall i \in \cI, \; \nae x \in \big( \tfrac{b}{2}, +\infty \big).
  \end{equation*}
\end{lemma}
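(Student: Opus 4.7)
The $(\Leftarrow)$ direction is straightforward: if every ratio $R_i := n_i/p_i$ equals a common constant $C$ on the relevant support, each bracket appearing in the dissipation formula of \Cref{prop:GRE_principle} vanishes identically, so $D^H[n|p] \equiv 0$.

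For the converse, I set $R_i(t,x) := n_i(t,x)/p_i(t,x)$ and exploit the strict convexity of $H$ together with the positivity of $\psi$: \Cref{prop:GRE_principle} then yields
\[
R_j(t,2x) = R_i(t,x), \qquad \forall (j,i) \text{ with } \kappa_{ji} > 0, \; \nae (t,x) \in \R_+ \times \bigl(\tfrac{b}{2}, +\infty\bigr),
\]
since $\supp \gamma(\cdot, 2\cdot)$ in the $x$ variable reduces to $[\tfrac{b}{2}, +\infty)$ by \ref{as:tau-gamma} and \ref{as:beta_support}. Plugging this equality into identity \eqref{eq:proof_GRE_n/p} from the proof of \Cref{prop:GRE_principle} kills the right-hand side and leaves the pure transport equation $\mpdvt R_i + \tau_i \mpdvx R_i = 0$ a.e. on $\R_+ \times (\tfrac{b}{2}, +\infty)$, for every $i \in \cI$.

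The main step is to extract spatial constancy from these two relations. I differentiate $R_i(t,x) = R_j(t,2x)$ once in $t$ and once in $x$, and combine with the transport equation at $(t,x)$ and at $(t,2x)$; the time derivatives cancel and yield
\[
\bigl(2 \tau_i(x) - \tau_j(2x)\bigr)\, \mpdvx R_i(t,x) = 0, \qquad \forall (j,i) \in \supp(\kappa), \; \nae (t,x).
\]
For each $i \in \cI$ (and $M \geq 2$), irreducibility \ref{as:kappa_irr} produces some $j \neq i$ with $\kappa_{ji} > 0$: pick any $k \neq i$ and take the penultimate vertex of a shortest $k \to i$ path in the directed graph of $\kappa$, which by minimality differs from $i$. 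Applying \ref{as:tau_hetero} to this $j$ forces $2 \tau_i(x) \neq \tau_j(2x)$ a.e. on $(\tfrac{b}{2}, +\infty)$, hence $\mpdvx R_i \equiv 0$ on that set.

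Consequently $R_i(t,x) = f_i(t)$; the transport equation then reduces to $f_i'(t) = 0$, so $f_i \equiv C_i$ is constant, and the doubling relation gives $C_i = C_j$ whenever $\kappa_{ji} > 0$. Strong connectivity of the graph of $\kappa$ propagates this equality to a single constant $C_i \equiv C$, completing the proof. The delicate part is the spatial constancy step: beyond carefully handling the a.e. manipulations (using that $R_i$ is well defined on $\cV \times (\tfrac{b}{2}, +\infty)$ thanks to \Cref{prop:positivity}), the key is to invoke irreducibility twice — once to exhibit a strict neighbor $j \neq i$ so that \ref{as:tau_hetero} is nonvacuous, and once more to propagate the common value of $R_i$ across the whole of $\cI$.
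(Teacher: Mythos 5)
Your proposal is correct and follows essentially the same route as the paper: vanishing dissipation gives the doubling relation $R_j(t,2x)=R_i(t,x)$ on the support of $\kappa_{ji}\gamma_j(2\cdot)$, substitution into the quotient identity yields pure transport for each $R_i$, differentiating the doubling relation and subtracting the two transport equations produces the factor $2\tau_i(x)-\tau_j(2x)$, and irreducibility propagates the resulting constants. One place where you are actually more careful than the paper: the paper asserts that \ref{as:tau_hetero} makes the factor $2\tau_i(x)-\tau_j(2x)$ non-vanishing "on $\Delta$", but this is vacuous (and in fact false for linear $\tau$) on diagonal triples $(i,i,x)$ with $\kappa_{ii}>0$; your extra step of extracting a strict in-neighbor $j\neq i$ of each $i$ from \ref{as:kappa_irr} (with $M\geq 2$) before invoking \ref{as:tau_hetero} is exactly what makes the conclusion $\partial_x R_i\equiv 0$ legitimate.
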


\begin{proof}
  Before starting, we introduce $I_{_\leftarrow}$ and
  $I_{_\rightarrow}$ as defined by~\eqref{def:Iarrow} and formulate
  the \enquote{support of
    $(v', v, x) \mapsto \kappa(v',v) \gamma(v',2x)$} with discrete
  index notations:
  \begin{equation*} 
    \Delta
    \coloneqq \overline{ \ensurestackMath{\addstackgap[1.2pt]{
          \Big\{ (j, i, x) \in \cI^2 \times \R_+ :
          \kappa_{ji}  \gamma_j(2x) > 0 \Big\}}}},
  \end{equation*}
  which can be simplified, thanks to the assumption on the support of
  $\gamma$, into
  \begin{equation*}
    \Delta = \Big\{ (j, i) \in \cI^2 : 
    j \in I_{_\leftarrow}(i) \Big\}  \times
    \big[ \tfrac{b}{2}, +\infty \big) .
  \end{equation*}
  Also, note that because $I_{_\leftarrow}(i)$ is not empty for every
  $i \in \cI$ thanks to~\ref{as:kappa_irr} we have
  \begin{equation}\label{eq:def_delta0}
    \Big \{ (i, x) : \exists j \in \cI \st (j, i, x) \in \Delta \Big\}
    = \cI \times \big[ \tfrac{b}{2}, +\infty \big).
  \end{equation}

  Now, from~\Cref{prop:GRE_principle} we rather prove that the second
  assertion is equivalent to
  \begin{equation}\label{eq:proof_g/N}
    \mfrac{n_i(t,x)}{p_i(t,x)}=\mfrac{n_j(t,2x)}{p_j(t,2x)},
    \quad \forall t>0, \quad (j, i, x) \in \Delta.
  \end{equation}
  
  From now on we set \smash{$u_i \coloneqq {n_i / }{p_i}$} for all
  $i\in \cI$ to simplify notations. It is clear that
  $u_i(t, \cdot) \equiv C$ a.e. on $\big[\frac{b}{2}, +\infty \big)$
  for all $t>0$ and $i \in \cI$, implies~\eqref{eq:proof_g/N}. Let us
  prove the converse.

  Recalling~\eqref{eq:proof_GRE_n/p}, we use~\eqref{eq:proof_g/N}
  (which holds for any $i \in \cI$, a.e.
  $x \in \big[\frac{b}{2}, +\infty \big)$ from~\eqref{eq:def_delta0})
  and~get:
  \begin{equation}\label{eq:proof_carac_g/N}
    \mpdvt u _i(t,x) +\tau_i(x) \mpdvx u_i(t,x) =0,
    \qquad t>0, \quad (i, x)  \in \cI \times
    \big[\tfrac{b}{2}, +\infty \big).
  \end{equation}
  On the one hand,~\eqref{eq:proof_g/N} gives: for $t>0$,
  $(j, i, x) \in \Delta$
  \begin{equation*}
    \mpdvt \big( u_i(t,x) \big) +\tau_i(x) \mpdvx \big( u_i(t,x) \big)
    = \mpdvt \big( u_j(t,2x) \big) +\tau_i(x) \mpdvx \big(
    u_j(t,2x) \big),
  \end{equation*}
  which is
  \begin{equation*}
    \mpdvt u _i(t,x) +\tau_i(x) \mpdvx u_i(t,x)
    = \mpdvt u_j(t ,2x) + 2\tau_i(x) \mpdvx u_j(t,2x),
  \end{equation*}
  that, combined with~\eqref{eq:proof_carac_g/N}, eventually brings
  \begin{equation}\label{eq:proof_carac_g/N_ter}
    \mpdvt u_j(t, 2x) + 2\tau_i(x) \mpdvx u_j(t, 2x) =0,
    \qquad t>0, \quad (j, i, x) \in \Delta.
  \end{equation}
  On the other hand, \eqref{eq:proof_carac_g/N} being true for all
  $i \in \cI$ and $x \in \big[\frac{b}{2}, +\infty \big)$, we have
  \emph{a fortiori}
  \begin{equation}\label{eq:proof_carac_g/N_bis}
    \mpdvt u _j(t,2x) + \tau_j(2x) \mpdvx u_j(t,2x) = 0,
    \qquad t>0,
    \quad (j, x) \in \cI \times \big[\tfrac{b}{4}, +\infty \big).
  \end{equation}
  Taking the difference between
  equations~\eqref{eq:proof_carac_g/N_ter}
  and~\eqref{eq:proof_carac_g/N_bis} we find
  \begin{equation*} 
    \big( 2 \tau_i(x) - \tau_j(2x) \big) \mpdvx u_j(t,2x) = 0,
    \qquad t>0, \quad (j, i, x) \in \Delta.
  \end{equation*}
  Assumption \ref{as:tau_hetero} ensures that the first factor does
  not cancel on $\Delta$, thus
  \begin{equation*}
    \mpdvx u_i(t,x) = 2 \mpdvx u_j(t,2x) = 0,
    \qquad t>0, \quad (j, i, x) \in \Delta.
  \end{equation*}
  From~\eqref{eq:def_delta0}, this means that for all $i \in \cI$,
  $u_i$ is constant w.r.t. the size variable a.e. on the set
  \smash{$\big(\frac{b}{2}, +\infty \big)$}, which, injected
  in~\eqref{eq:proof_carac_g/N}, yields that it is also constant in
  the time variable:
  \begin{equation*}
    \mpdvx u_i(t,x) =0, \quad \mpdvt u_i(t,x) = 0,
    \qquad t>0, \quad
    (i,x) \in \cI \times \big(\tfrac{b}{2}, +\infty \big).
  \end{equation*}
  Therefore $u_i(t,x) = C_i$ for $t>0$, and
  $(i,x) \in \cI \times \big(\tfrac{b}{2}, +\infty \big)$, and we get
  by~\eqref{eq:proof_g/N} that:
  \begin{equation*}
    C_i = u_i(t,x) =u_j(t,2x) = C_j,
    \qquad  (j, i, x) \in \Delta,
  \end{equation*}
  which yields that for all $i \in \cI$, we have $C_j= C_i$ as soon as
  $j \in I_{_\leftarrow}(i)$. But then for all~$j$ in
  $I_{_\leftarrow}(i)$, the map $k \mapsto C_k$ is constant on
  $I_{_\leftarrow}(j)$, and so on, so that it is constant on sets of
  indices corresponding to any strongly connected component of the
  graph of~$\kappa$. We conclude thanks to~\ref{as:kappa_irr}: the
  graph of $\kappa $ is strongly connected, thus $k \mapsto C_k$
  constant on $\cI$.
\end{proof}

Another interesting consequence of the GRE principle is the uniqueness
of the direct eigenvector solution to~\ref{pb:GFv}. Uniqueness
for the whole eigenproblem can also be derived, see
\Cref{thm:uniqueness_eig} below whose proof is left to
\Cref{sec:proof}.

\begin{prop}[Uniqueness of eigenelements]\label{thm:uniqueness_eig}
  Assume assumptions of~\Cref{thm:existence_eig} are satisfied and
  that, in addition,~{\ref{as:tau_hetero}} and~\ref{as:kappa_irr}
  holds. Then, there exists a unique solution to the
  eigenproblem~\ref{pb:GFv}.
\end{prop}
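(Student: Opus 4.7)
The plan is to split the uniqueness assertion into three sub-claims --- uniqueness of the Malthus parameter $\lambda$, of the direct eigenvector $N$ given $\lambda$, and of the adjoint eigenvector $\phi$ given $\lambda$ and $N$ --- each of which I would handle with the entropy machinery just developed, combined with the positivity result of \Cref{prop:positivity} (which crucially uses~\ref{as:kappa_irr}) and the characterization of GRE minimizers in \Cref{lemma:GRE_minimizer} (which crucially uses~\ref{as:tau_hetero}).

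For the \emph{eigenvalue}, I take two weak solutions $(\lambda_1, N^1, \phi^1)$ and $(\lambda_2, N^2, \phi^2)$ to~\ref{pb:GFv}. Testing the direct weak formulation for $N^1$ against $\phi^2$ (valid thanks to the regularity granted by \Cref{thm:existence_eig}) and symmetrically the adjoint equation for $\phi^2$ against $N^1$, the transport and fragmentation contributions cancel by duality, leaving $(\lambda_1 - \lambda_2)\miint_\cS N^1\phi^2\,\dd v\dd x = 0$. Since \Cref{prop:positivity} yields $\supp(N^1) = \cV\times[\tfrac{b}{2},+\infty)$ and $\phi^2 > 0$ on $\cV\times(0,+\infty)$, the integral is strictly positive and hence $\lambda_1 = \lambda_2$.

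For the \emph{direct eigenvector}, with $\lambda$ fixed and two candidates $N^1, N^2$, I set $p_t \coloneqq N^1\e^{\lambda t}$, $n_t \coloneqq N^2\e^{\lambda t}$ and $\psi_t \coloneqq \phi^1\e^{-\lambda t}$, which are weak solutions to~\ref{pb:GFtv} and~\ref{pb:GFtv_ad} respectively. The support condition of \Cref{prop:GRE_principle} is met because $N^1$ and $N^2$ share the same support $\cV\times[\tfrac{b}{2},+\infty)$ and $\phi^1$ is positive on $\cS$, both by \Cref{prop:positivity}. The ratio $n_t/p_t = N^2/N^1$ being time-independent, the entropy $E^H[n|p]$ is constant and $D^H \equiv 0$. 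Choosing $H$ strictly convex, \Cref{lemma:GRE_minimizer} gives $N^2/N^1 \equiv C$ a.e. on $\cV\times[\tfrac{b}{2},+\infty)$, and the normalization $\miint_\cS N^i\,\dd v \dd x = 1$ fixes $C = 1$; since $N^1$ and $N^2$ both vanish outside their common support this forces $N^1 = N^2$.

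The \emph{adjoint eigenvector} is the subtlest step and is where I expect the main difficulty. Given $\lambda$ and $N$ fixed, I take two positive adjoint solutions $\phi^1, \phi^2$ and mimic the previous argument through a dual entropy $\tilde E[\phi^1|\phi^2](t) \coloneqq \miint_\cS n_t\,\psi^2_t\, H(\psi^1_t/\psi^2_t)\,\dd v\dd x$ with $\psi^i_t \coloneqq \phi^i\e^{-\lambda t}$ and $n_t \coloneqq N\e^{\lambda t}$, so that the ratio $\psi^1_t/\psi^2_t = \phi^1/\phi^2$ is time-independent and $\tilde E$ is constant in $t$. A computation parallel to the proof of \Cref{prop:GRE_principle}, but with the roles of direct and adjoint equations swapped, produces a non-negative dissipation whose vanishing forces, via an argument analogous to \Cref{lemma:GRE_minimizer} using~\ref{as:tau_hetero} and~\ref{as:kappa_irr}, the identity $\phi^1/\phi^2 \equiv C$ on $\cV\times[\tfrac{b}{2},+\infty)$. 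The delicate point is to extend this identity to $\cV\times(0,+\infty)$, where $N$ vanishes but the $\phi^i$ are still positive: this should follow by propagating the equality along the backward characteristics of the adjoint transport (which reach $[\tfrac{b}{2},+\infty)$ in finite time under~\ref{as:tau_positive}) and exploiting the non-local term $\phi_j(\cdot/2)$ of~\ref{pb:GFtv_ad}. The normalization $\miint_\cS N\phi^i\,\dd v\dd x = 1$ then fixes $C = 1$ and yields $\phi^1 = \phi^2$.
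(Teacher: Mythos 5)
Your first two steps --- uniqueness of $\lambda$ by the duality identity $(\lambda_1-\lambda_2)\la N^{\ttexp{(1)}},\phi^{\ttexp{(2)}}\ra=0$ together with the positivity of the bracket from \Cref{prop:positivity}, then uniqueness of $N$ by running the GRE machinery on $n=N^{\ttexp{(1)}}\e^{\lambda t}$, $p=N^{\ttexp{(2)}}\e^{\lambda t}$, $\psi=\phi^{\ttexp{(1)}}\e^{-\lambda t}$ and invoking \Cref{lemma:GRE_minimizer} --- reproduce the paper's proof.

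For the adjoint eigenvector you take a genuinely different route. The paper stays \emph{linear}: it sets $\phi:=\pm(\phi^{\ttexp{(2)}}-\phi^{\ttexp{(1)}})$, which is again an adjoint solution, observes that $x\mapsto\phi_i(x)\e^{-\Lambda_i(x)}$ decays on $[x_0,+\infty)$ for any $x_0>0$ (the same computation as in \Cref{prop:positivity}), so that $\phi\leq 0$ past $x_0$, then lets $x_0\to 0$ in the identity $0=\sum_i\int N_i\phi_i=\sum_i\bigl(\int_0^{x_0}N_i\phi_i-\int_{x_0}^{\infty}N_i\abs{\phi_i}\bigr)$ to get $\phi\equiv 0$ on $\supp N$, and finally extends to $\cV\times[0,\tfrac{b}{2}]$ by solving the adjoint ODE explicitly there (where $\beta\equiv 0$). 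You instead propose a \emph{dual} entropy $\tilde E=\iint_\cS n_t\,\psi^2_t\,H(\psi^1_t/\psi^2_t)$ with the direct/adjoint roles swapped. This can be made to work: the computation parallel to \Cref{prop:GRE_principle} yields, for convex $H$, that $\tfrac{\dd}{\dd t}\tilde E$ equals a sum of \emph{non-negative} convexity gaps weighted by $\gamma_iN_i\kappa_{ij}\psi^2_j(\cdot/2)$, and since $\tilde E$ is constant in time these gaps all vanish. (Your sign is reversed --- the dual entropy is non-\emph{decreasing}, not non-increasing; this is harmless here because $\tilde E$ is constant anyway.) But the route costs more than it buys: you would need to state and prove a dual GRE principle and a dual analogue of \Cref{lemma:GRE_minimizer}, and most importantly you must justify that $\tilde E$ is finite, i.e.\ control the \emph{ratio} $\phi^1/\phi^2$ integrated against $N\phi^2$. \Cref{thm:existence_eig} only bounds each $\phi^i$ from above by $C(1+x^k)$ and provides no lower bound on $\phi^2$ at infinity, so the integrability of, say, $N(\phi^1)^2/\phi^2$ (take $H(u)=u^2$) is not a priori granted --- a nonlinear obstruction the paper's linear difference avoids entirely. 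Your last remark on propagating the equality into $(0,\tfrac{b}{2})$ via characteristics is sound but, in your own framework, superfluous: once the dual dissipation vanishes, the right-hand side of the transport equation for $\phi^1/\phi^2$ is identically zero on all of $(0,+\infty)$ ($\beta\equiv 0$ on $[0,b]$ kills the non-local term there, the minimizer condition kills it on $[b,+\infty)$), so the ratio is constant in $x$ for each $i$, and irreducibility ties the constants together directly.
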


\subsection{Long-time asymptotic behavior}
\label{ssec:conv}

After solving the eigenproblem, a natural wish is to characterize the
asymptotic behavior of a solution to the Cauchy
problem~\ref{pb:GFtv} for which we expect {the} A.E.G.:
\begin{equation*}
  n(t,v,x) \underset{t \rightarrow + \infty}{\sim}
  C \e^{\lambda t}N(v,x).
\end{equation*}
The GRE principle then proves very useful to get information on the
evolution of the distance between $n$ and $p = \e^{\lambda t}N(v,x)$
in {a} well-chosen $L^1$-norm (ponderated by $\phi$ by taking
$\psi = \e^{- \lambda t} \phi$ in the GRE inequality) since it
indicates that the entropy of $n$ w.r.t. $p$ decreases in time.
However we need, first to be sure that~\ref{pb:GFtv} admits a
weak solution in the suited space
$\sC \big(\R_+, L^1(\cS; \phi \, \dd v \dd x) \big)$ for any initial
condition allowing us to use the maximum principle of
\Cref{prop:contraction}-iii).

To state convergence we thus assume existence of a solution in
$\sC \big(\R_+, L^1(\cS; \phi \, \dd v \dd x) \big)$
to~\ref{pb:GFtv} for some initial data 
satisfying $\abs{n^{in}} \leq C N$.
When $\cV$ is finite~\ref{as:V}, a straightforward adaptation
of~\cite{bernard_asynchronous_2020} however guarantees the existence
of a weak solution to the Cauchy problem~\ref{pb:GFtv} in
$\sC \big(\R_+, L^1(\cS; \phi \, \dd v \dd x) \big)$ with initial
condition \smash{$n^{in} \in L^1 (\cS; \phi \, \dd v \dd x)$},
providing stronger assumptions on the coefficients. Introducing
\begin{equation*}
  \cP_{x_0, \omega_1} \coloneqq \big\{ f : \exists K_1 \geq K_0 > 0,\,
  \omega_0 \in (0, \omega_1], \; K_0 x^{\omega_0} \mathds{1}_{x \geq
    x_0} \leq f(x) \leq K_1 \max(1, x^{\omega_1}), x >0 \big\},
\end{equation*}
these assumptions {can be stated} as follows: for all $v \in \cV$:
\begin{itemize}[leftmargin=.7cm,parsep=0cm,itemsep=0.1cm,topsep=0cm]
\item $\tau(v,\cdot) : (0, +\infty) \rightarrow (0, +\infty)$ is
  $\sC^1$ and belong to $\cP_{1,1}$,
  
\item $\gamma(v, \cdot) : (0,+ \infty) \rightarrow [0, +\infty)$ is
  continuous with connected support (our \ref{as:beta_support}) and
  belongs to $\cP_{x_0, \omega}$ for some positive $x_0$, $\omega$,
  
\item either $\frac{1}{\tau(v, \cdot)}$ or
  $\frac{\gamma(v, \cdot)}{\tau(v, \cdot)}$ and
  $\frac{x^{\nu_0}}{\tau(v, \cdot)}$, for $\nu_0 \geq 0$, in
  $L^1_0$ (this is \ref{as:beta_L0} and \ref{as:tau_in_0} resp.).
\end{itemize}
Existence of a weak solution to~\ref{pb:GFtv} is besides
derived in~\cite{doumic_statistical_2015}, as the expectation of the
empirical measure of the underlying Markov process over smooth test
functions. The result is stated for the continuous compact
setting~\ref{as:V_continuous} without the modeling
assumption~\ref{as:tau-gamma} but continuous division rate $\gamma$
satisfying $\gamma(0) = 0$ and $\int^\infty x^{-1} \gamma = \infty$
(the analogous of~\ref{as:beta_in_inf}) and linear growth rate
$\tau : (v,x) \mapsto vx$.

\begin{theorem}[Convergence to equilibrium]\label{thm:conv}
  Assume~\ref{as:V},~\ref{as:tau},~\ref{as:gamma},~\ref{as:beta}
  and~\ref{as:kappa}. Take $(\lambda,N,\phi)$ the solution
  to~\ref{pb:GFv}, and assume
  $n \in \sC \big(\R_+, L^1(\cS; \phi \, \dd v \dd x) \big)$ is a
  solution to~\ref{pb:GFtv} with initial condition $n^{in}$
  verifying $\abs{n^{in}} \leq C N$, then we have
  \begin{equation*}
    \lim\limits_{t \rightarrow \infty} \msum{i \in \cI} \mint_0^\infty 
    \big\lvert n_i(t, x)
    \e^{-\lambda t}-\rho N_i(x) \big\rvert \phi_i(x) \dd{x} = 0,
    \quad  \rho \coloneqq \msum{i \in \cI} \mint_0^\infty  n^{in}_i(x)
    \phi_i(x) \dd{x}.
  \end{equation*}
\end{theorem}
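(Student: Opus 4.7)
The plan is to follow the classical A.E.G. strategy: pass to the renormalized solution $\tilde n \coloneqq n\,\e^{-\lambda t}$, bound it through the maximum principle, extract a compact trajectory thanks to the BV regularity, and identify every accumulation point using the characterization of entropy minimizers of \Cref{lemma:GRE_minimizer}. The conservation law of \Cref{prop:contraction} then pins the limit down uniquely, after which a density argument removes the extra regularity on $n^{in}$.

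First, I would reduce to \emph{regular} initial data. Take $n^{in}_\ep$ such that $|n^{in}_\ep| \leq C_\ep N$, $\mpdvx(\tau n^{in}_\ep) \in L^1(\cS; \phi\, \dd v \dd x)$, and $n^{in}_\ep \to n^{in}$ in $L^1(\cS; \phi\, \dd v \dd x)$ (such approximants exist by density of smooth compactly supported functions on $\cV \times (0, +\infty)$, using \Cref{prop:positivity} which makes $\phi$ locally bounded away from zero and $N$ not vanishing on $\cV \times [\tfrac{b}{2}, +\infty)$). The contraction principle of \Cref{prop:contraction} applied to $\tilde n - \tilde n_\ep$ gives
\begin{equation*}
  \msum{i \in \cI} \mint_0^\infty \bigl| \tilde n_i(t,x) - \tilde n_{\ep, i}(t,x) \bigr| \phi_i(x)\,\dd x
  \leq \msum{i \in \cI} \mint_0^\infty \bigl| n^{in}_i(x) - n^{in}_{\ep,i}(x) \bigr| \phi_i(x)\,\dd x,
\end{equation*}
uniformly in $t$, so it suffices to prove the convergence for $n^{in}_\ep$.

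Second, with a regular initial datum, \Cref{prop:contraction}-iii) gives $|\tilde n_t| \leq C N$ for all $t$, and \Cref{prop:BV_reg} supplies uniform $L^1(\phi)$ bounds on $\mpdvt \tilde n_t$ and on $\mpdvx(\tau_i \tilde n_{t,i})$. The trajectory $\{\tilde n_t\}_{t \geq 0}$ is thus equi-bounded, equi-integrable (through the pointwise domination by $CN$ and the integrability of $N\phi$) and spatially equicontinuous in the $L^1(\phi)$ sense, while uniform boundedness of $\mpdvt \tilde n$ in $L^1(\phi)$ provides time equicontinuity. An Arzelà–Ascoli / Fréchet–Kolmogorov argument then yields relative compactness of $\{\tilde n_t\}$ in $L^1_{\mathrm{loc}}(\cS; \phi\, \dd v \dd x)$, and since $|\tilde n_t| \leq CN$ with $N\phi \in L^1(\cS)$, the convergence upgrades to $L^1(\cS; \phi\, \dd v \dd x)$.

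Third, I identify the limit via the GRE. Take $H(u) = (u - \rho)^2$ (or any strictly convex $H$ with $H(\rho) = 0$) and $\psi_t = \phi\,\e^{-\lambda t}$, $p_t = N \e^{\lambda t}$; then $E^H[n|p](t) = \iint_\cS \phi N\, H(\tilde n_t/N)$ is non-increasing and bounded below, hence converges. Along a subsequence $t_k \to +\infty$, $\tilde n_{t_k} \to n_\infty$ in $L^1(\phi)$ with $|n_\infty| \leq CN$. By time-translation invariance of the problem, the shifted solutions $s \mapsto \tilde n(s + t_k, \cdot)$ converge (again up to extraction, using the same compactness) to a weak solution $\tilde m$ of the renormalized equation \eqref{eq:proof_dynamic_renormalized} with $\tilde m_0 = n_\infty$; on this limit the entropy $E^H[\tilde m|N](s)$ is constant in $s$, hence $D^H[\tilde m|N\e^{\lambda s}] \equiv 0$. \Cref{lemma:GRE_minimizer} (whose hypotheses~\ref{as:V}, \ref{as:kappa_irr}, \ref{as:tau_hetero}, \ref{as:tau-gamma}, \ref{as:beta_support} are part of the present theorem's assumption~\ref{as:kappa} and the others) forces $\tilde m_i(s,x)/N_i(x) \equiv $ constant on $\cV \times (\tfrac{b}{2}, +\infty)$, i.e.\ $n_\infty = \rho' N$ for some $\rho' \in \R$. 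The conservation law of \Cref{prop:contraction}-i) gives
\begin{equation*}
  \rho' = \msum{i \in \cI} \mint_0^\infty \tilde n_i(t_k, x) \phi_i(x)\, \dd x
  = \msum{i \in \cI} \mint_0^\infty n^{in}_i(x) \phi_i(x)\, \dd x = \rho,
\end{equation*}
so every accumulation point equals $\rho N$, hence the full trajectory converges. Combining with the density step from the first paragraph yields the announced limit. The main obstacle is the compactness/identification step: one must ensure the BV regularity is inherited by the time-shifted solutions so that the $\omega$-limit set is really invariant under the dynamics, which is where the uniform-in-time BV estimates of \Cref{prop:BV_reg} are essential.
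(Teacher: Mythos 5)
Your proposal is correct and follows essentially the same route as the paper: reduction to BV initial data by density and the contraction principle, compactness of the time-shifted renormalized solutions via the maximum principle and the BV estimates of \Cref{prop:BV_reg}, vanishing of the entropy dissipation in the limit, identification of the accumulation points through \Cref{lemma:GRE_minimizer}, and the conservation law to fix the constant. The only cosmetic differences are your $\omega$-limit-set phrasing (the paper works with the sequences $h^{(k)}(t,x)=h(t+k,x)$ on cylinders $[0,T]\times K$) and your choice $H(u)=(u-\rho)^2$ instead of $H(u)=u^2$ applied to $h=\tilde n-\rho N$.
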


\begin{proof}
  We follow the now classical proof given in Perthame's
  book~\cite[p.98]{perthame_transport_2007}. Note that one can always
  regularize the initial data so that it satisfies the assumptions
  of~\Cref{prop:BV_reg}, then contraction principle ensures that the
  convergence of the regularized problem implies converge of the
  initial problem. Therefore, it is enough to prove converge for
  $n^{in}$ satisfying the assumptions of~\Cref{prop:BV_reg}.

  \textit{First step.} For all $t \geq 0$, all $i \in \cI$ and for
  $x > 0$, we set
  \begin{equation*}
    h_i(t,x) \coloneqq n_i(t,x) \e^{-\lambda t}-\rho N_i(x)
  \end{equation*}
  and notice that $h \e^{\lambda t}$ satisfies
  {problem~\ref{pb:GFtv}} (linear and satisfied by both
$n$ and $N \e^{\lambda t}$) with regularized initial data. Thus
\Cref{prop:BV_reg} applies and ensures
  \begin{equation}\label{eq:BV_reg_h}
    \mint_0^\infty \abs{\mpdvt h_i(t,x)} \phi_i(x) \dd{x}
    \leq C(n^{in}), \qquad
    \mint_0^\infty \abs{\mpdvx h _i(t,x)} \phi_i(x) \dd{x}
    \leq C(n^{in}).
  \end{equation}
  Likewise, \Cref{prop:contraction} applies and brings first (Maximum
  principle and conservation law)
  \begin{equation}\label{eq:proof_int_gphi_zero}
    \abs{h} \leq C N,
    \qquad  \msum{i \in \cI} \mint_0^\infty h_i(t,x) \phi_i(x) \dd{x}
    = \msum{i \in \cI} \mint_0^\infty \big( n^{in}_i(x)
    -\rho N_i(x) \big) \phi_i(x)  \dd{x} = 0,
  \end{equation}
  and second (contraction principle) that
  \begin{equation*}
    R(t) \coloneqq \msum{i \in \cI} \mint_0^\infty
    \big\lvert n_i(t,x) \e^{-\lambda t}-\rho N_i(x)
    \big\rvert \phi_i(x) \dd{x}
  \end{equation*}
  decreases towards some non-negative constant, say $L$, as $t$ tend
  to infinity. We thus want to prove that $L$ is zero.
  
  \textit{Second step.} Instead of considering the convergence of $R$ as
  $t$ tends to infinity, we introduce the family of functions
  $(h^{\texp{(k)}})_{k \in \N}$, defined for all $t \geq 0$ and for
  $x>0$ by
  \begin{equation*}
    h^{\texp{(k)}}(t,x) \coloneqq h(t+k, x), \quad k\in\N,
  \end{equation*}
  and rather consider the
  $L^1([0, T] \times \R_+ ; \, \phi_i \dd{x})$-convergence of the
  sequences \smash{$(h_i^{\texp{(k)}})_{k \geq 0}$}, $i \in \cI$.

  Let us start with the
  \smash{$(\tau_i h_i^{\texp{(k)}})_{k \geq 0}$}, $i \in \cI$. Thanks
  to~\eqref{eq:BV_reg_h} and~\ref{as:tau_positive}, we have that for
  every compact set $K$ in~$(0, +\infty)$, the sequences of
  \smash{$\tau_i h_i^{\texp{(k)}}$}, for $i \in \cI$, are bounded in
  \smash{$BV ([0, T] \times K)$} (with weight~$\phi_i$).  By a
  diagonal argument, we can thus extract
  from~$(\tau h^{\texp{(k)}})_{k \in \N}$ a subsequence, denoted
  identically, s.t. for all compact $K \subset (0, +\infty)$ and
  every~$i \in \cI$, $(\tau_i h_i^{\texp{(k)}})_{k \in \N}$ converges
  strongly in $L^1([0,T] \times K; \, \phi_i \dd{x})$. Denote $s$ the
  limit in \smash{$\ell_1\big( \cI ; L^1([0, T] \times \R_+) \big)$.}
  The maximum principle ($\abs{h^{\texp{k}}} \leq C N$) besides
  ensures that \smash{$(h_i^{\texp{(k)}})_{k \geq 0}$} is
  equi-integrable and bounded in $L_1([0,T] \times \R_+)$. From the
  Dunford-Pettis theorem we can thus extract from
  \smash{$(h^{\texp{(k)}})_{k \geq 0}$} a subsequence that converges
  to some $g$, component-wise weakly in $L_1([0,T] \times \R_+)$. Note
  that for every $i \in \cI$, $\tau_i g_i$ and~$s_i$, the weak and
  strong limits of \smash{$(\tau_i h_i^{\texp{(k)}})_{k \geq 0}$},
  coincide on every compact sets of $[0, T] \times (0,
  +\infty)$. Using Fatou's lemma,~\ref{as:tau_positive} and the above
  results we conclude to the strong convergence of the
  \smash{$(h_i^{\texp{(k)}})_{k \geq 0}$}, $i \in \cI$, on
  $[0,T] \times \R_+$:
  \begin{multline*}
    \mint_0^\infty \abs{h_i^{(k)}(t,x) - g_i(t,x)} \dd{x} \leq 2 C
    \Big( \mint_0^\ep N_i (x) \dd{x} + \mint_X^\infty N_i
    (x) \dd{x}\Big) \\
    + \mfrac{1}{m_{\texp{[\ep, X]}}} \mint_\ep^X
    \abs{ \tau_i h_i^{(k)}(t,x) - s_i(t,x) } \dd{x}, \quad t \geq 0,
  \end{multline*}
  as small as possible for $k$, $X$ big and $\ep$ small. In
  addition, the strong limit $g$ satisfies the renormalization by
  $\e^{- \lambda t}$ of problem~\ref{pb:GFtv} and verifies
  \begin{equation*}
    \abs{g_i(t,x)} \leq C N_i(x), \quad \forall i \in \cI.
  \end{equation*}
  In particular the support of $g$ is included in the support of $N$,
  that is \smash{$\cV \times \big[\frac{b}{2}, +\infty \big)$} from
  \Cref{prop:positivity}, so that $g \equiv N \equiv 0$ on
  \smash{$\cV \times \big[0, \frac{b}{2} \big)$}.

  \textit{Third step.} We can now work on proving that $g$ is zero. The
  GRE principle stated in \Cref{prop:GRE_principle} applies to
  \smash{$n \coloneqq h \e^{\lambda t}$},
  \smash{$p \coloneqq N \e^{\lambda t}$},
  \smash{$\psi\coloneqq \phi \e^{-\lambda t}$} and
  \smash{$H(u) \coloneqq u^2$} convex,~bringing
  \begin{equation*}
    \begin{aligned}
      -4 \msum{i \in \cI} \mint_0^\infty \phi_i(x) \Big( \msum{j \in
        \cI} &\kappa_{ji} \gamma_j(2x) N_j(2x) \Big\lvert
      \mfrac{h_i(t,x)}{N_i(x)}
      -\mfrac{h_j(t,2x)}{N_j(2x)} \Big\rvert^2 \Big) \dd{x}\\
      &= \mfrac{\dd}{\dd{t}} \msum{i \in \cI} \mint_0^\infty
      \mfrac{\phi_i(x)}{N_i(x)} \big\lvert h_i(t,x) \big|^2 \dd{x} <
      +\infty.
    \end{aligned}
  \end{equation*}
  Therefore
  \begin{equation*}
    \begin{aligned}
      \mint_0^\infty \! &\msum{i, j \in \cI} \kappa_{ji} \phi_i(x)
      \gamma_j(2x) N_j(2x) \Big\lvert
      \mfrac{h^{\texp{(k)}}_i(t,x)}{N_i(x)}
      -\mfrac{h^{\texp{(k)}}_j(t,2x)}{N_j(2x)} \Big\rvert^2 \dd{x}\\
      &= \mint_k^\infty \! \msum{i, j \in \cI} \kappa_{ji} \phi_i(x)
      \gamma_j(2x) N_j(2x) \Big\lvert \mfrac{h_i(t,x)}{N_i(x)}
      -\mfrac{h_j(t,2x)}{N_j(2x)} \Big\rvert^2 \dd{x} \underset{k
        \rightarrow \infty}{\longrightarrow} 0.
    \end{aligned}
  \end{equation*}
  Passing to the strong limit the $h^{\texp{(k)}}$ in the first
  integral we get:
  \begin{equation*}
    \mint_0^\infty \msum{i, j \in \cI} \kappa_{ji} \phi_i(x) \gamma_j(2x)
    N_j(2x) \Big\lvert \mfrac{g_i(t,x)}{N_i(x)}
    -\mfrac{g_j(t,2x)}{N_j(2x)} \Big\rvert^2 \dd{x} = 0.
  \end{equation*}

  \textit{Fourth step.} We can finally apply~\Cref{lemma:GRE_minimizer}
  and get that \smash{$g_i(t,x) = CN_i(x)$} for all $i \in\cI$, and
  a.e. \smash{$x \in \big(\tfrac{b}{2}, +\infty \big)$} (and thus
  a.e. on $\R_+$ from the last remark of step 2). To conclude we use
  the normalization condition on $\phi$ and the conservation
  law~\eqref{eq:proof_int_gphi_zero} also valid on $g$ after passing
  to the limit in $k$:
  $ C = C \sum_i \int_\cS N_i \phi_i = \sum_i \int_{\cS} g_i(t) \phi_i
  = 0$ and thus $L=0$.
\end{proof}

\section{Illustration and discussion of the results}
\label{sec:illustration}

\subsection{Comparison of our result with existing literature}
\label{ssec:illustration_comparison}

The main message of Theorem \ref{thm:conv} is that {the
  heterogeneity in growth} rate, providing enough mixing in feature,
enables convergence towards a stationary profile. We provide here an
example that sheds light on how our result fits in the current
knowledge.

Let us consider the case where $\cV=\{v_1,v_2\}$ with $v_1<v_2$, take
$\tau(v,x) \coloneqq v x$, $\gamma: (v,x) \mapsto \tau(v,x) \beta(x)$
with $\beta$ such that~\ref{as:gamma} and~\ref{as:beta} are satisfied,
and define the two following variability kernels --a reducible one,
$\kappa^{red}$ standing for a population with absolutely no mixing in
feature, and an irreducible one, $\kappa^{irr}$ for a population with
mixing:
\begin{equation*}
  \kappa^{red} \coloneqq 
  \begin{pmatrix}
    1 & 0  \\
    0 & 1
  \end{pmatrix}, \qquad \kappa^{irr} \coloneqq
  \begin{pmatrix}
    0 & 1  \\
    1 & 0
  \end{pmatrix}.
\end{equation*}

In the non-mixing case, the Cauchy problem~\ref{pb:GFtv} is a
system of two {growth-fragmentation} equations that are not coupled
\begin{equation}\label{pb:GFtv_22NM} 
  \left\{
    \begin{aligned}
      & \mpdvt n_1(t,x)+ v_1\mpdvx \big( x n_1(t,x) \big) +
      \gamma_1(x) n_1(t,x) = 4 \gamma_1(2x) n_1(t,2x),\\
      & \mpdvt n_2(t,x)+ v_2\mpdvx \big( x n_2(t,x) \big) +
      \gamma_2(x) n_2(t,x) = 4 _2\gamma(2x) n_2(t,2x),\\
      & n_1(0,x) = n_1^{in} (x),\quad n_2(0,x) = n_2^{in} (x),
    \end{aligned}
  \right.
\end{equation}
whose associated eigenproblem, on the contrary, is coupled but only
through the eigenvalue~$\lambda^{red}$. Taken independently, each
equation is associated with a first positive eigenvalue associated
with eigenelements $(\lambda_i,N_i,\phi_i)$,
see~\cite{doumic_jauffret_eigenelements_2010}. Still, neither
$e^{-\lambda_1 t} {n}_1(t,x)$ nor
$e^{-\lambda_2 t} {n}_2(t,x)$ converge towards a stationary
profile as $t$ goes to infinity. Instead, we know
from~\cite{bernard_cyclic_2019} that they oscillate in the long time.

Besides, since Assumptions~\ref{as:V}, \ref{as:tau}, \ref{as:gamma},
\ref{as:tau-gamma}, \ref{as:beta}, \ref{as:kappa_cv} are satisfied,
\Cref{thm:existence_eig} applies and guarantees the existence of
eigenelements $(\lambda^{red}, N^{red},\phi^{red})$ associated to
system~\eqref{pb:GFtv_22NM}, with
\begin{equation*}
  0< \lambda_1 \leq \lambda^{red} \leq \lambda_2,
  \qquad N_i^{red} \geq 0,
  \quad \phi_i^{red} \geq 0,
  \quad i \in \{1,2\},
\end{equation*}
where the first inequalities are ensured by
\Cref{thm:monotonicity_V}. However, the irreducibility condition
\ref{as:kappa_irr} is not satisfied by $\kappa^{red}$, so uniqueness
is not ensured and \Cref{prop:positivity} does not apply to guarantee
that $N_1^{red}$ and $N_2^{red}$ are non-zero. Adapting the proof of
\Cref{prop:positivity} to each of the strongly connected components of
the graph of $\kappa$, simply reduced to $\{v_1 \}$ and $\{v_2 \}$,
shows that for each $i \in \{1,2\}$, $N_i^{red}$ is either zero
everywhere or positive on $(\frac{b}{2}, +\infty)$. The normalization
condition
\begin{equation*}
  \msum{i \in \{1,2\}} \mint_0^\infty \! N_i^{red}(x) \dd{x} = 1
\end{equation*}
implies that at least for one $i \in \{1,2\}$, $N_i^{red}$ is
non-zero.  Both $N_1^{red}$ and $N_2^{red}$ cannot be non-zero,
otherwise \Cref{lemma:GRE_minimizer} would work (in
equality~\eqref{eq:proof_g/N}, we need the existence of $i\neq j$
such that $N_i^{red} \neq 0$ and $N_j^{red} \neq 0$) and we could use
it to prove the long-time convergence of
system~\eqref{pb:GFtv_22NM} towards a stationary sate (fourth
step of \Cref{thm:conv}). That would
contradict~\cite{bernard_cyclic_2019}.  The only possibility is that
$N_1^{red} \equiv 0$ and $N_2^{red}(x)>0$ for $x \geq \frac{b}{2}$ or
the opposite and then $\lambda = \lambda_2$ or $\lambda = \lambda_1$
respectively.

In the mixing case, the equations of system~\ref{pb:GFtv} are
coupled through their source term:
\begin{equation*}
  \left\{
    \begin{aligned}
      & \mpdvt n_1(t,x)+ v_1\mpdvx \big( x n_1(t,x) \big) +
      \gamma_1(x) n_1(t,x) = 4 \gamma_2(2x) {n}_2(t,2x),\\
      & \mpdvt n_2(t,x)+ v_2\mpdvx \big( x n_2(t,x) \big) +
      \gamma_2(x) n_2(t,x) = 4 \gamma_1(2x) {n}_1(t,2x),\\
      & n_1(0,x) = n_1^{in} (x),\quad n_2(0,x) = n_2^{in} (x),
    \end{aligned}
  \right.
\end{equation*}
and the irreducibility condition~\ref{as:kappa_irr}, missing in the
non-mixing case, is now satisfied. We can thus apply successively
\Cref{thm:existence_eig}, \Cref{prop:positivity} and \Cref{thm:conv}
to get eigenelements $(\lambda^{irr}, N^{irr}, \phi^{irr})$ such that
\begin{equation*}
  \sum_{i=1}^2 \int_{\R^+} \big\lvert n_i(t,x)e^{-\lambda^{irr} t}
  - N_i^{irr}(x) \big\rvert \phi_i^{irr}{(x) \dd{x}}
  \underset{t \rightarrow \infty}{\longrightarrow} 0
\end{equation*}
with $\lambda^{irr} >0$, and for $i \in \{1,2\}$, $N_i^{irr}$ positive
on $(\frac{b}{2}, +\infty)$ and $\phi_i^{irr}>0$ on $(0,+\infty)$.

These two simple cases illustrate that the existence result
(\Cref{thm:existence_eig}) holds for every probability matrix
$\kappa$, in particular reducible ones. The irreducibility condition
on $\kappa$ comes into play to characterize the functions canceling
the dissipation of entropy (\Cref{lemma:GRE_minimizer}), which then
proves crucial to establish uniqueness of the steady state and
convergence towards it.

\subsection{Numerical illustration}
\label{ssec:numerics}

Similarly to the previous subsection, we focus here on the special
case of linear growth rates to illustrate the convergence result of
\Cref{thm:conv}. We numerically approximate and compare the long-time
asymptotics in {the} presence and absence of mixing in feature.

We choose $M=3$ different features, namely {$\cV = \{1, 2, 3\}$},
and approximate on the grid
\begin{equation*}
  \cS_N \coloneqq \cV \times \{ x_0, \ldots x_{2N} \},
  \qquad x_m \coloneqq 2^{\frac{-m-N}{k}}, \; m \in \{0, \ldots, 2N
  \}, \quad k = 200, \;  N = 2501,
\end{equation*}
the time-evolution of the following initial data (taken identical for
all features)
\begin{equation*}
  n^{in}: (v,x) \mapsto C x^a \e^{-b x^2},
  \qquad a=30, \quad b=60,
  \quad C  \, \st \lN n^{in} \rN_{L^1(\cV \times (0, x_{2N}))} = 1,
\end{equation*}
under the law given by the Cauchy problem~\ref{pb:GFtv} with
coefficients
\begin{equation*}
  \tau: (v,x) \mapsto v x,
  \qquad \gamma : (v,x) \mapsto x^2 \tau(v,x)
\end{equation*}
and variability kernel $\kappa^{red}$ or $\kappa^{irr}$, for the
non-mixing and mixing case respectively:
\begin{equation*}
  \kappa^{red} \coloneqq
  \begin{pmatrix}
    1 & 0 & 0 \\
    0 & 1 & 0 \\
    0 & 0 & 1 \\
  \end{pmatrix}, \qquad \kappa^{irr} \coloneqq
  \begin{pmatrix}
    0.7 & 0.2 & 0.1 \\
    0.5 & 0.4 & 0.1 \\
    0.3 & 0.3 & 0.4 \\
  \end{pmatrix}.
\end{equation*}
Our numerical scheme, available at
\href{https://github.com/anais-rat/growth-fragmentation}{%
  \small\texttt{https://github.com/anais-rat/growth-fragmentation}},
is an adaptation to the case $M>1$ of the scheme developed
in~\cite{bernard_cyclic_2019} to capture the oscillations that appear
in the case of equal mitosis, linear growth rate and no variability.
For $M=1$ the scheme remains identical, consisting in a splitting with
upwind scheme of $\text{CFL} = 1$ on a geometrical grid to avoid
diffusion. However, for $M>1$ the CFL is $1$ only for the fastest
individuals, those with feature $v_{max} = \max (\cV)$. It is
$\frac{v_i}{v_{max}} <1$ for the other individuals which yields
numerical diffusion in $v_i<v_{max}$.

We see on \Cref{fig:approx_n_nonmixing} that in the absence of mixing
the fastest subpopulation quickly dominates the others but does not
stabilize and keeps oscillating instead. However, as soon as mixing is
introduced (\Cref{fig:approx_n_mixing}), no subpopulation is
overwhelming the others anymore and all stabilize in shape to a steady
distribution.
\begin{figure}
  \centering
  \begin{subfigure}{0.49\textwidth}
    \centering%
    ~~~\includegraphics[height=.16\textheight]{%
      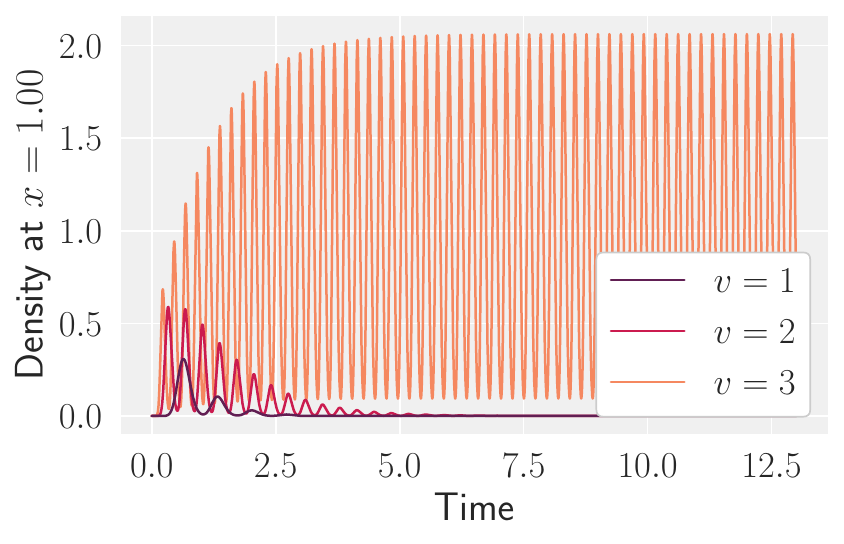}%
    
    \includegraphics[height=.7\textheight]{%
      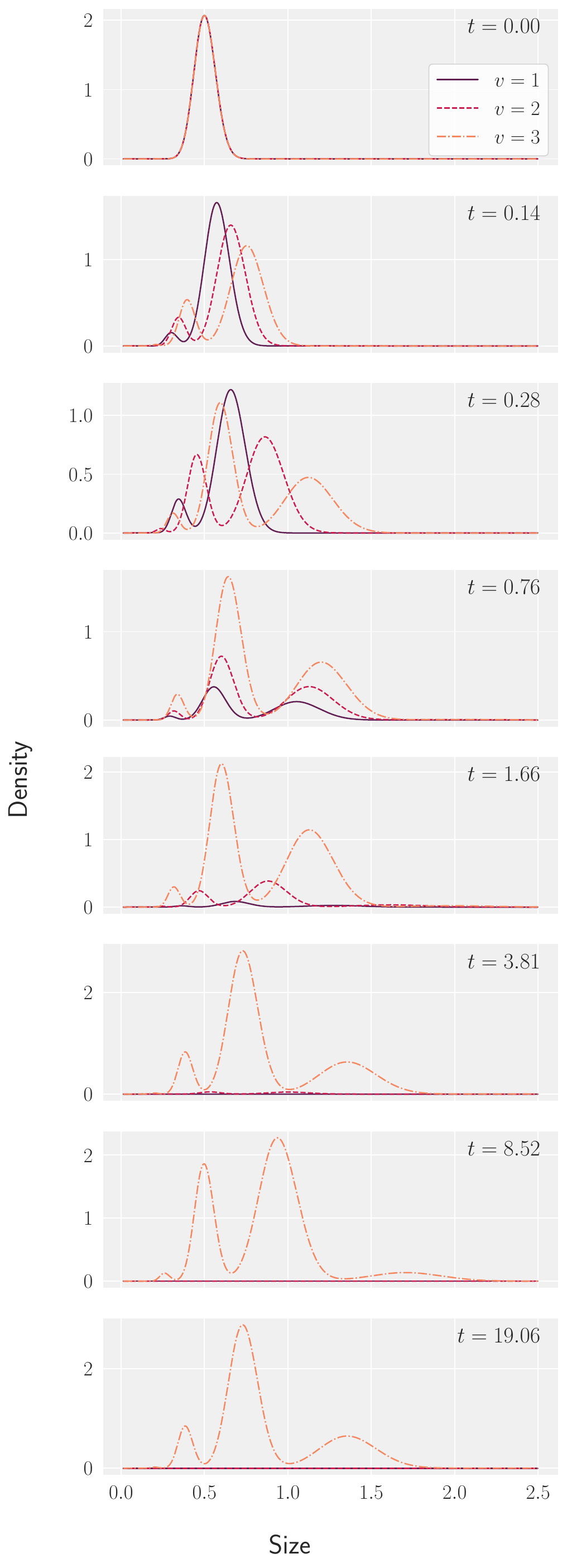}
    \caption{Non-mixing case}
    \label{fig:approx_n_nonmixing}
  \end{subfigure}
  \hfill
  \begin{subfigure}{0.49\textwidth}
    \centering%
    ~~~~\includegraphics[height=.16\textheight, %
    trim={0cm 0 0 0}, clip=true]{%
      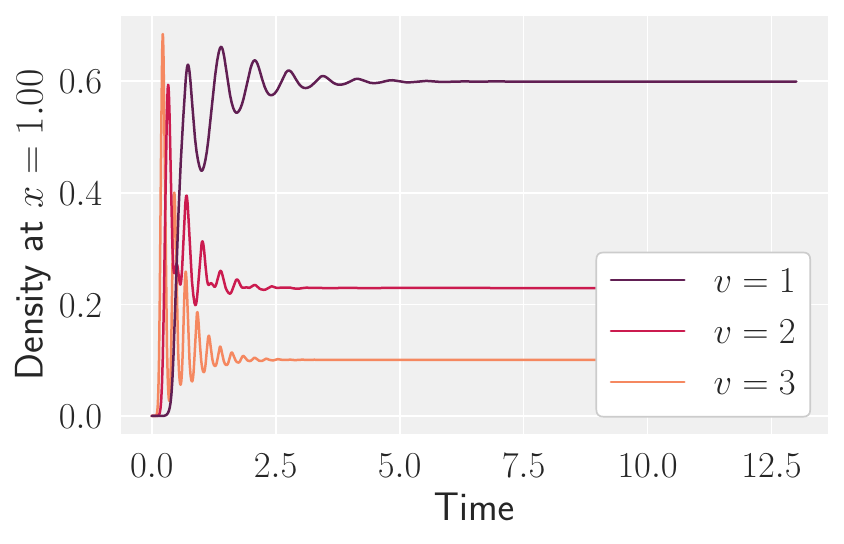}%
    
    \includegraphics[height=.7\textheight, %
    trim={0cm 0 0 0}, clip=true]{%
      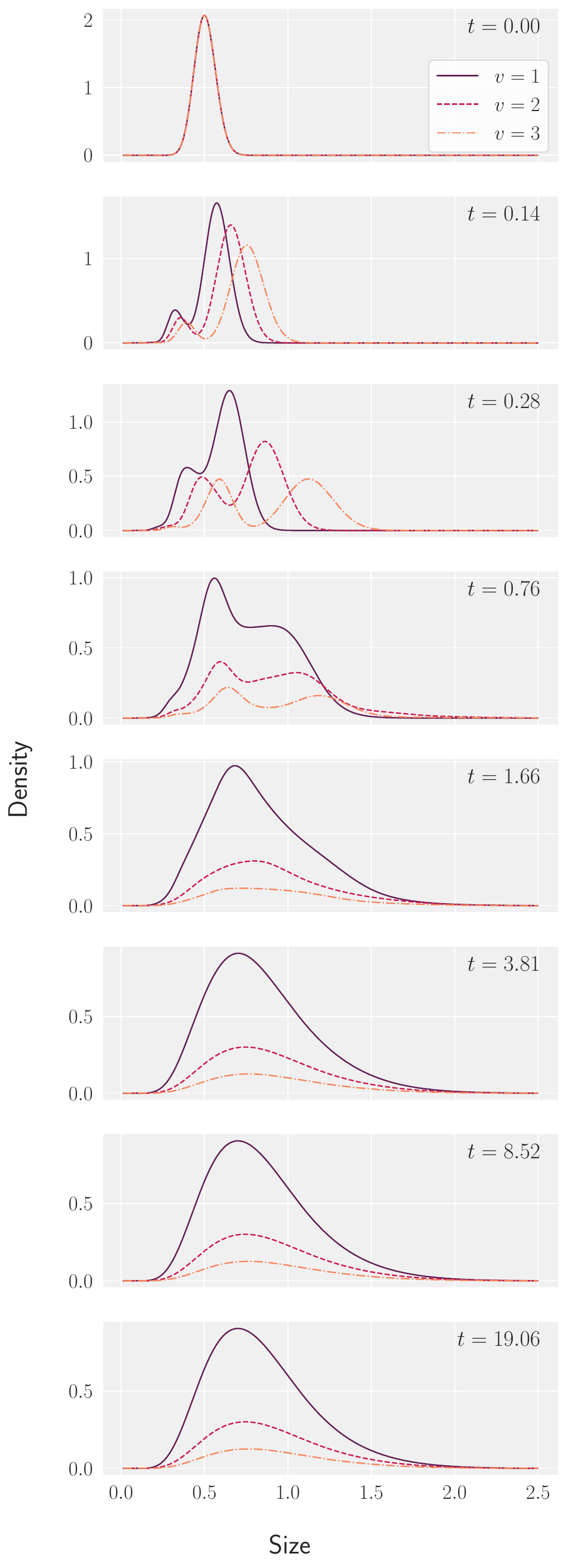}
    \caption{Mixing case}
    \label{fig:approx_n_mixing}
  \end{subfigure}
  \caption{Time evolution of the size distribution per feature, either
    the whole distribution at a few times (\textit{bottom}) or at many
    times but only at size $x=1$ (\textit{top}).  Obtained with
    coefficients $\tau (v,x) = vx$, $\gamma(v,x) = x^2 \tau(v,x) $ and
    the two variability kernels $\kappa^{red}$
    (\ref{fig:approx_n_nonmixing}) and $\kappa^{irr}$
    (\ref{fig:approx_n_mixing}).}
  \label{fig:approx_n}
\end{figure}

We besides approximate the Malthus parameter $\lambda$ solution to the
associated eigenproblem, by three different ways to control as much as
possible the approximation error (numerical values in
\Cref{table:malthus_par}). They are obtained by averaging over the
last times of computation different estimates of the instantaneous
growth rate of the population (see \Cref{fig:estimates_lambda}):
\begin{equation*}
  \lambda \approx \mfrac{ \tfrac{\dd}{\dd t}
    \big( \eiint{\cS} n_t \big)}{ \eiint{\cS} n_t } 
  \coloneqq \lambda_{n}(t), \quad
  \lambda = \mfrac{\eiint{\cS} \tau N}{\eiint{\cS} x N}
  \approx  \mfrac{\eiint{\cS} \tau n_t}{\eiint{\cS} x n_t}
  \coloneqq \lambda_{\tau}(t), \quad
  \lambda = \eiint{\cS} \gamma N
  \approx \mfrac{\eiint{\cS} \gamma n_t}{\eiint{\cS} n_t}
  \coloneqq \lambda_{\gamma}(t). 
\end{equation*}
\begin{minipage}{\textwidth}
  \centering \vspace{.4cm}%
  \includegraphics[height=.13\textheight]{%
    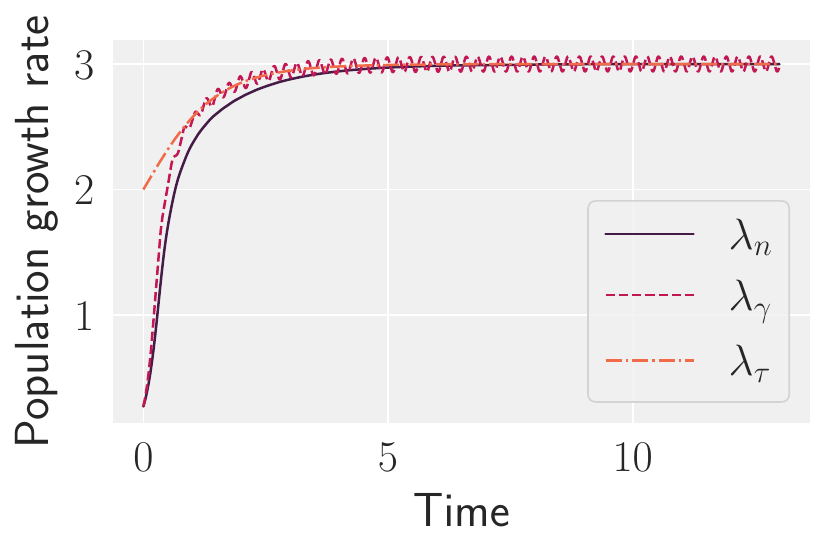}
  \includegraphics[height=.13\textheight, trim={1cm 0 0 0}, clip]{%
    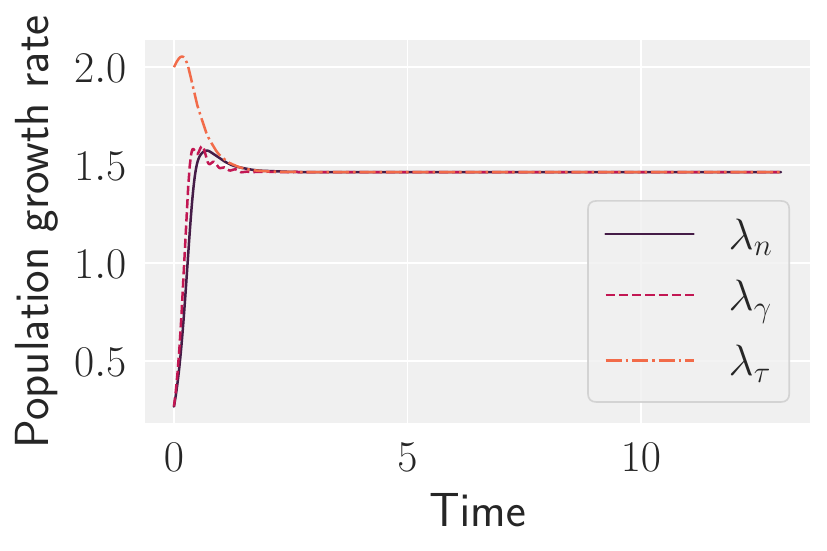}
  ~~~\includegraphics[height=.13\textheight]{%
    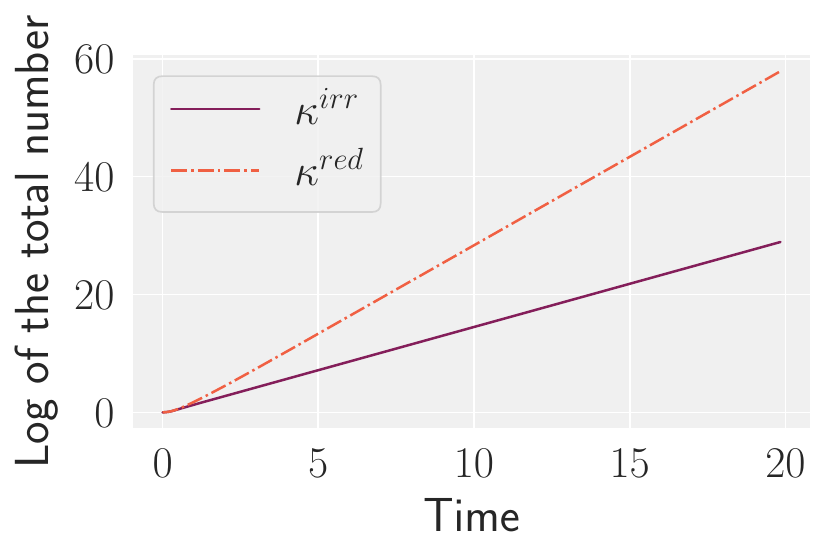}
  \captionsetup{type=figure}%
  \captionof{figure}{Time evolution of the estimates of the
    instantaneous population growth rate obtained with coefficients
    $\tau (v,x) = vx$, $\gamma(v,x) = x^2 \tau(v,x) $ and the two
    variability kernels $\kappa^{red}$ (\textit{left}) and
    $\kappa^{irr}$ (\textit{middle}). For any $t>0$, $\lambda_n(t)$ is
    obtained by linear regression of the log of the total number,
    \smash{$s \mapsto \ln \big( \int \! \! \! \int_{\cS} n(s,v,x) \,
      \dd v \dd x \big)$} (\textit{right}), on the time interval
    $\big[ \frac{t}{2}, t \big]$. \label{fig:estimates_lambda}}
\end{minipage}

\renewcommand{\arraystretch}{1.1}
\begin{minipage}{\textwidth} \centering
  \vspace{.4cm}
  \begin{tabular}{|l|c|c|c|}
    \cline{2-4}
    \multicolumn{1}{c|}{}
    & $\lambda_n$ & $\lambda_\tau$ & $\lambda_\gamma$ \\
    \hline
    \sf{Non-mixing} & 3.005 & 3.000  & 3.007\\
    \hline
    \sf{Mixing} & 1.469 & 1.465 & 1.470\\
    \hline
  \end{tabular}
  \captionsetup{type=table} \captionof{table}{Estimations of the
    Malthus parameters through different
    methods. \label{table:malthus_par}}
\end{minipage}

As expected, we retrieve the Malthus parameter
$\lambda^{red} \approx \lambda_{max} = v_{max}$ that corresponds to a
population where all cells have same feature $v_{max}$ in the case
{of} no mixing; and $\lambda^{irr} \in (v_{min}, v_{max})$,
{with} $v_{min}$ and $v_{max}$ the extremum values of $\cV$, in
the case of mixing.

A deeper study of the variation of the Malthus parameter with respect
to variation in $\kappa$ or $\cV$ would benefit to the understanding
of mixing mechanisms. In agreement with Olivier's numerical
results~\cite{olivier_how_2017}, note that in the case of a
\emph{homogeneous kernel} (such that
\smash{$\kappa_{ij}= \frac{1}{M}$} for all $i,j \in \{1, \ldots M\}$)
and the setting described above for the other parameters, the
estimated Malthus parameter (around $1.47$), is lower that the average
on the features ($\frac{1}{M}\sum_i v_i=2$) --which are the Malthus
parameters of the subpopulations grown independently.

\subsection{Discussion of the irreducibility condition}
\label{ssec:discussion_irr}

The previous examples actually {illustrate two \enquote{extreme} cases
  where either the system is not coupled, or it is \enquote{fully}
  coupled through an irreducible $\kappa$.}  In the present
subsection, we consider cases (in dimension $M=2$ for simplicity) that
are {neither {irreducible} nor completely decoupled}, with variability
kernels of the form:
\begin{equation*}
  \kappa^{_{S t F}} (p) \coloneqq
  \begin{pmatrix}
    p & 1-p \\
    0 & 1
  \end{pmatrix} \quad \text{and} %
  \quad \kappa^{_{F t S}} (p) \coloneqq
  \begin{pmatrix}
    1 & 0  \\
    1-p & p
  \end{pmatrix}, \qquad p \in \cP \subset (0,1),
\end{equation*}
still with $\tau : (v,x) \mapsto vx$ and
$\gamma : (v,x) \mapsto v x^3$.
Both $\kappa^{_{S t F}}$ and $\kappa^{_{F t S}}$ model populations of
two species (with traits $v_1 < v_2$) of which only one can mutate into
the other one; the slowest species for~$\kappa^{_{S t F}}$ (\emph{Slow
  to Fast}) and the fastest for~$\kappa^{_{F t S}}$ (\emph{Fast to
  Slow}).

We start by a useful result that will lead our intuition. Consider the
usual setting with no variability but assume that at birth cells can
either die (or definitely disappear/be removed from the population) or
survive with probability $p \in (0, 1]$:
\begin{equation} \label{eq:GF_with_death}
  \left\{
    \begin{aligned}
      &\tfrac{\dd}{\dd{x}} \bigl( \tau(x) N(x) \bigr)
        + \bigl( \lambda_p + \gamma(x) \bigr) N(x)
        = 4 p \gamma(2x) N(2x),\\
      &\tau(0) N(0) = 0,
        \qquad \mint N  = 1, \qquad  N \geq 0.
    \end{aligned}
  \right.
\end{equation}
How is the Malthus parameter $\lambda_p$ affected by the death rate
$p$? When $p=\frac{1}{2}$ we recover the conservative case: in
average, only one daughter cell is kept at each division hence the
total number of cell is conserved and $\lambda_{\frac{1}{2}}$ is
zero. When $\tau = \tau_0x$ we know that $\lambda_1$ is $\tau_0$. The
following proposition provides a general expression of $\lambda_p$ in
this case.
\begin{prop}
  \label{prop:lambda_with_death}
  Assume that $\tau : x \mapsto \tau_0 x$. Then the Malthus parameter
  $\lambda_p$ solution to~\eqref{eq:GF_with_death} is given by
  \begin{equation*}
    \lambda_p = \tau_0 \big( \log_2(p) + 1 \big),
    \qquad p \in (0, 1].
  \end{equation*}
\end{prop}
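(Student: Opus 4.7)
The strategy is to exploit the homogeneity of the linear growth rate $\tau(x) = \tau_0 x$ via a Mellin-type moment computation, which makes the precise shape of $\gamma$ irrelevant to the eigenvalue. Concretely, I multiply the direct equation of~\eqref{eq:GF_with_death} by $x^s$ for a real parameter $s$ to be chosen, and integrate over $(0, +\infty)$. Integration by parts on the transport term, using the boundary condition $\tau(0) N(0) = 0$ together with the polynomial moment bounds on $N$ (inherited from the arguments behind \Cref{thm:existence_eig}) to kill the boundary contributions at $0$ and $+\infty$, yields $-s \tau_0 M(s)$, where $M(s) := \int_0^\infty x^s N(x) \, dx$. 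A change of variables $y = 2x$ in the fragmentation integral rewrites the right-hand side as $p \cdot 2^{1-s} G(s)$, with $G(s) := \int_0^\infty x^s \gamma(x) N(x) \, dx$.

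Collecting the four contributions yields the key identity
\begin{equation*}
  \bigl( \lambda_p - s \tau_0 \bigr) M(s) = \bigl( p \cdot 2^{1-s} - 1 \bigr) G(s), \qquad s \in \R,
\end{equation*}
valid for every admissible $s$. The crucial observation is now that the choice
\begin{equation*}
  s^{\star} := 1 + \log_2(p)
\end{equation*}
produces $p \cdot 2^{1-s^{\star}} = p \cdot 2^{-\log_2(p)} = 1$, annihilating the factor on the right. We thus obtain $\bigl( \lambda_p - s^{\star} \tau_0 \bigr) M(s^{\star}) = 0$, so that as soon as $M(s^{\star}) \neq 0$ the announced formula $\lambda_p = \tau_0 \bigl( \log_2(p) + 1 \bigr)$ follows. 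As sanity checks, $p = 1$ yields $\lambda_1 = \tau_0$ and $p = 1/2$ yields $\lambda_{1/2} = 0$, consistent with the conservative case.

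The main obstacle is the justification that $M(s^{\star})$ is well defined, finite, and non-zero. When $p \geq 1/2$ we have $s^{\star} \geq 0$ and finiteness follows directly from the moment estimates on $N$, while $M(s^{\star}) > 0$ comes from $N \geq 0$ being a non-trivial probability density. When $p < 1/2$, $s^{\star} < 0$ and one needs enough vanishing of $N$ at the origin: under~\ref{as:beta_support} with $b > 0$, \Cref{prop:positivity} provides $\supp N = [b/2, +\infty)$ and the moment is automatic; the delicate case is $b = 0$, where one would need an \emph{a priori} control of $N$ near $0$, for instance by proving integrability of $x^{s^{\star}} N$ through the equation itself. A clean alternative, bypassing the integrability question on $N$, is to plug the ansatz $\phi(x) := x^{s^{\star}}$ into the adjoint equation (whose fragmentation factor $4p$ becomes $2p$): the same cancellation $2p \cdot 2^{-s^{\star}} = 1$ shows that $\phi$ is a \emph{positive} solution of the adjoint problem at the eigenvalue $s^{\star} \tau_0$, which must then coincide with $\lambda_p$ by the Krein--Rutman uniqueness of the principal eigenpair.
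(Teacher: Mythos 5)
Your primary argument is exactly the one the paper uses: integrate the direct equation against $x^s$, use linearity of $\tau$ to reduce the moment identity to $(\lambda_p - s\tau_0) M(s) = (p\,2^{1-s}-1)G(s)$, and pick $s^\star = 1+\log_2 p$ to kill the right-hand side. The integrability concern you raise for $p<1/2$ (negative $s^\star$) is in fact already settled by the moment estimates in \Cref{thm:existence_eig} (and the cited \cite[Theorem~1]{doumic_jauffret_eigenelements_2010}), which give $x^\alpha \tau N\in L^1(\R_+)$ for every $\alpha\in\R$, hence $M(s^\star)<\infty$ without any case distinction on $b$; your alternative of plugging $\phi(x)=x^{s^\star}$ into the adjoint equation and invoking uniqueness of the principal eigenpair is a clean, correct variant that sidesteps the moment question entirely and is arguably tidier than the paper's version.
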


\begin{proof}
  Integrating~\eqref{eq:GF_with_death} against $x^\alpha$, for
  $\alpha \in \R$ (this is legitimate
  from~\cite[Theorem~1]{doumic_jauffret_eigenelements_2010}), brings
  us
  \begin{equation*}
    -\alpha \mint_0^\infty x^{\alpha-1} \tau(x) N(x) \dd{x}
    + \lambda_p \mint_0^\infty x^\alpha N(x) \dd{x}
    = \lp \mfrac{p}{2^{\alpha-1}} - 1 \rp \mint_0^\infty x^\alpha
    \gamma(x) N(x) \dd{x}.
  \end{equation*}
  Therefore, when $\tau$ is linear we get
  \begin{equation*}
    \lambda_p = \tau_0 \alpha +
    \lp \mfrac{p}{2^{\alpha-1}} - 1 \rp
    \mfrac{\eint{0}{\infty} \! x^\alpha \gamma(x) N(x) \dd{x}}{
      \eint{0}{\infty} \! x^{\alpha} N(x) \dd{x}}, \qquad
    \alpha \in \R,  
  \end{equation*}
  and it suffices to chose $\alpha$ so that to cancel the second term
  of the right-hand side to obtain the expected result.
\end{proof}

\Cref{prop:lambda_with_death} allows us to properly define the
proportion $p_0$ of newborn cells kept in subpopulation $2$ (of
fast-growing cells with trait $v_2$) that is necessary for it to grow
asymptotically exactly as fast as subpopulation $1$ (with trait $v_1$)
when both are considered independently:
\begin{equation}
  \label{eq:p_0}
  v_1 = v_2 \big( \log_2(p_0) + 1 \big)
  \qquad \Longleftrightarrow \qquad
  p_0 \coloneqq 2^{\frac{v_1}{v_2} - 1}.
\end{equation}

Numerical simulations, presented in
\Cref{fig:approx_n_new,fig:approx_n_at_fixed_x_lim}, suggest that the
long-time asymptotic behavior depends on which species is able to
mutate as follows:
\begin{itemize}[leftmargin=*, wide=0cm, topsep=0cm, itemsep=0cm]
\item When the slowest species is able to mutate --this is
  $\kappa = \kappa^{_{S t F}}$-- then its contribution to the
  exponentially growing subpopulation with trait $v_2 > v_1$ is
  negligible and the system asymptotically behaves as if there was
  only subpopulation~$2$ (exponential growth at rate $\lambda = v_2$
  and cyclic behavior, \Cref{fig:approx_n_new_1}).
\item When the fastest species is the one that can mutate, the
  asymptotics depends on~$p$. There exists $p_0 \in (0.5, 1)$, most
  likely given by~\eqref{eq:p_0}
  (see~\Cref{fig:approx_n_at_fixed_x_lim}),
  such that
  \begin{itemize}[leftmargin=.5cm,parsep=0cm,itemsep=0.1cm,topsep=0cm]
  \item Case $p < p_0$. If $p > 0.5$, subpopulation~$2$ grows but more
    slowly than subpopulation~$1$ alone (that would grow alone
    exponentially at rate $v_1$). Otherwise this is even clearer since
    subpopulation $2$ loses (or conserves if $p=0.5$) cells over time
    up to extinction ($p<0.5$). In both cases, the contribution of
    subpopulation with trait $v_2$ is rapidly negligible and the
    system asymptotically behaves as if there was only
    subpopulation~$1$ (exponential growth at rate $v_1$ and cyclic
    behavior, \Cref{fig:approx_n_at_fixed_x_lim}-\textit{(left)} and
    \ref{fig:approx_n_new_2}).
  \item Case $p > p_0$. Although it loses cells, subpopulation~$2$
    grows exponentially at~rate
    \begin{equation}
      \label{eq:lambda_2_p}
      \lambda_{v_2,p} \coloneqq v_2 \big( \log_2(p) + 1 \big)
    \end{equation}
    superior to the rate at which would grow subpopulation~$1$
    alone. Thus asymptotically, both grow exponentially at the rate
    $\lambda_{v_2,p}$ and oscillate at a frequency imposed by the
    subpopulation with trait $v_2$ (see
    \Cref{fig:approx_n_at_fixed_x_lim}-\textit{(right)} and
    \ref{fig:approx_n_new_3}).
  \end{itemize}
\end{itemize}
Because our scheme is diffusive for the feature $v_1$, it might not be
clear whether a seemingly convergence of $n_1 \exp^{-\lambda t}$
towards a steady state is due to numerical {diffusion} or if it
corresponds to the theoretical behavior. To avoid ambiguity we limit
the diffusion by taking the refined size-grid defined by $N=25001$,
$k = 2000$.

\begin{minipage}{.58\textwidth}
  \centering
  \renewcommand{\arraystretch}{1.}
    \begin{tabular}{|l|c|c|c|c|}
      \cline{2-5}
      \multicolumn{1}{c|}{}
      & $\cP$& $N_1$& $N_2$& $\lambda$\\
      \hline
      $\kappa^{_{S t F}}(p)$
      & $(0, 1)$& $0$& periodic& $v_2$\\
      \hline
      \multirow{2}*{$\kappa^{_{F t S}}(p)$}
      & $(0, p_0)$& periodic& $0$& $v_1$\\
      \cline{2-5}
      & $(p_0, 1)$& periodic & periodic &$\lambda_{v_2,p}$\\
      \hline
    \end{tabular}
\end{minipage}
\begin{minipage}{.39\textwidth}
  \vspace{.4cm}%
  \captionsetup{type=table} \captionof{table}{Conjecture of the
    long-time behavior when $\kappa$ is $\kappa^{_{S t F}}(p)$ or
    $\kappa^{_{F t S}}(p)$ depending on $p \in \cP$. The value of
    $p_0$ and $\lambda_{v_2,p}$ are conjectured to be defined
    by~\eqref{eq:p_0} and \eqref{eq:lambda_2_p}, respectively.
    \label{table1:conjecture}}
\end{minipage}

\begin{minipage}{\textwidth}
  \centering %
  \includegraphics[height=0.17\textheight]{%
    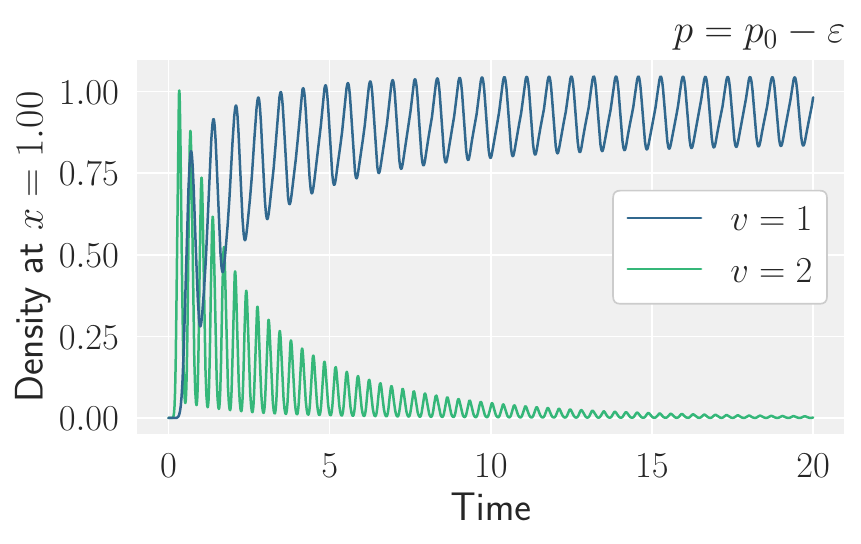}
  ~~~\includegraphics[height=.17\textheight, trim={1cm 0 0 0}, clip]{%
    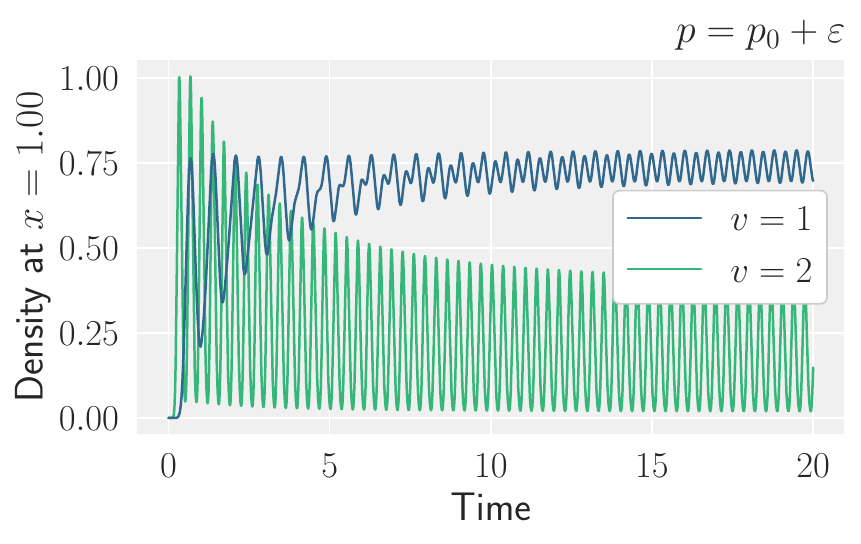}
  \captionsetup{type=figure}%
  \vspace{-.2cm}%
  \captionof{figure}{Time evolution of the size distribution per
    feature at size $x=1$, for the variability kernels
    $\kappa^{_{F t S}}(p_0 \pm \ep)$, with $p_0$ defined
    by~\eqref{eq:p_0} and $\ep = 0.05$ to show that $p_0$ appears as a
    critical value for $p$ in the case \emph{Fastest to
      Slowest}. Obtained with the coefficients $\tau (v,x) = vx$,
    $\gamma(v,x) = x^2 \tau(v,x)
    $. \label{fig:approx_n_at_fixed_x_lim}}
\end{minipage}

\begin{figure}
  \centering
  \begin{subfigure}[b]{0.32\textwidth}
    \centering%
    ~~~\includegraphics[height=0.14\textheight, trim={0cm 0cm 0 0},
    clip]{%
      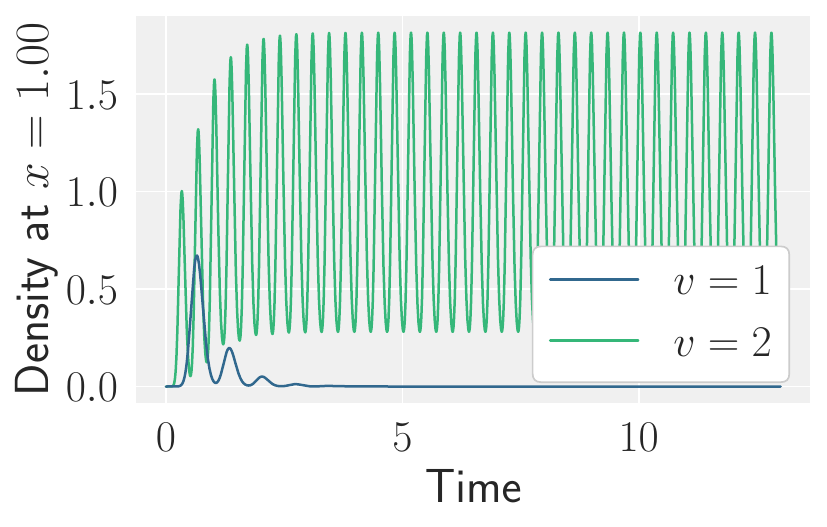}
    
    \includegraphics[height=0.65\textheight]{%
      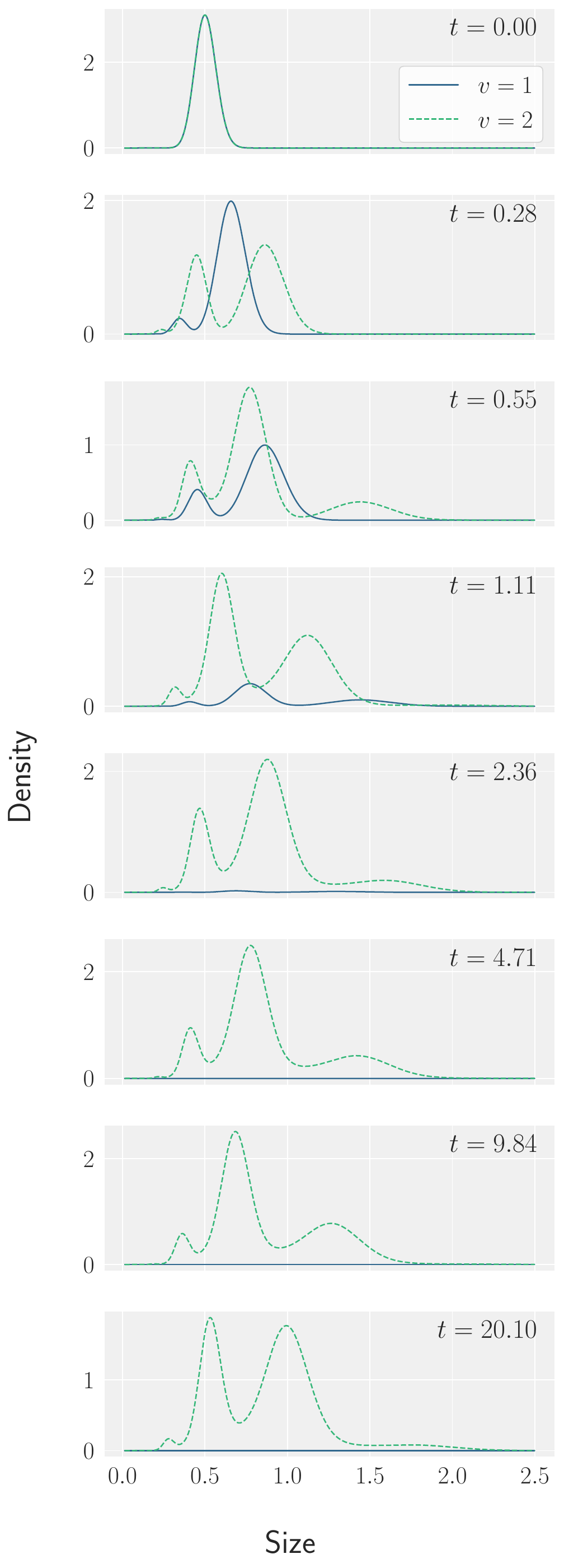}
    \caption{Slowest to fastest \newline $p=0.5$}
    \label{fig:approx_n_new_1}
  \end{subfigure}
 \hfill
  \begin{subfigure}[b]{0.31\textwidth}
    \centering%
    ~~~~\includegraphics[height=.14\textheight, trim={1cm 0cm 0 0},
    clip]{%
      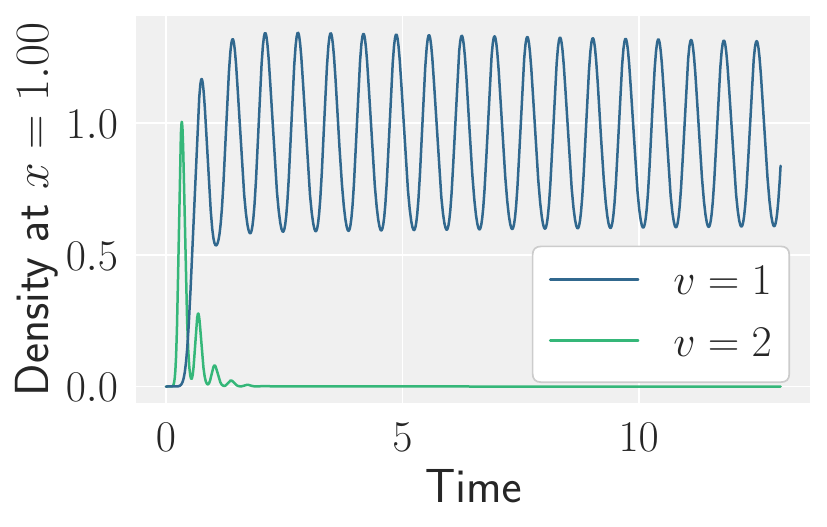}
    
    ~~~\includegraphics[height=.65\textheight,%
    trim={1.5cm 0 0 0}, clip]{%
      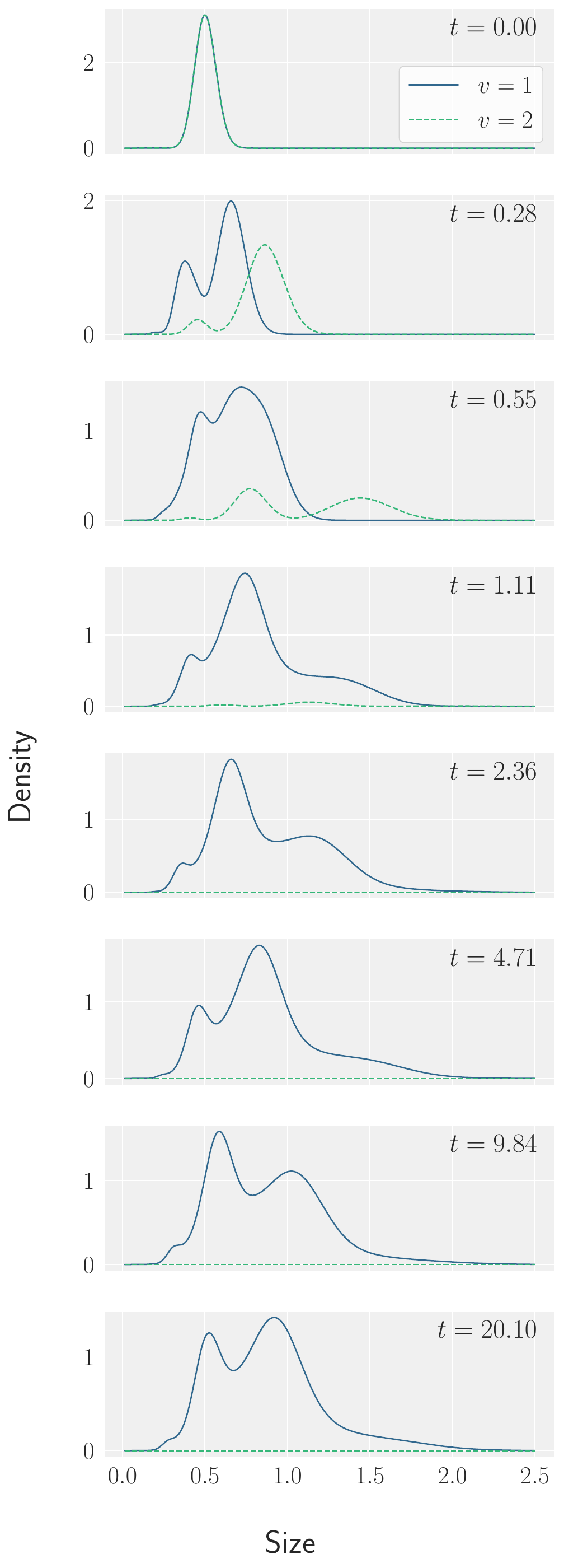}
    \caption{Fastest to lowest \newline $p=0.2$}
    \label{fig:approx_n_new_2}
  \end{subfigure}
  \hfill
  \begin{subfigure}[b]{0.31\textwidth}
    \centering%
    ~~~\includegraphics[height=.14\textheight, trim={1cm 0cm 0 0},
    clip]{%
      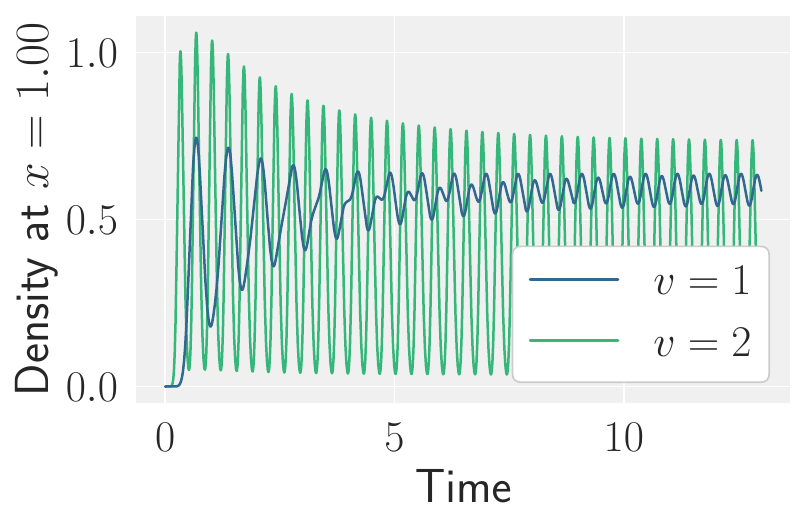}

    \includegraphics[height=.65\textheight,%
    trim={1.5cm 0 0 0}, clip]{%
      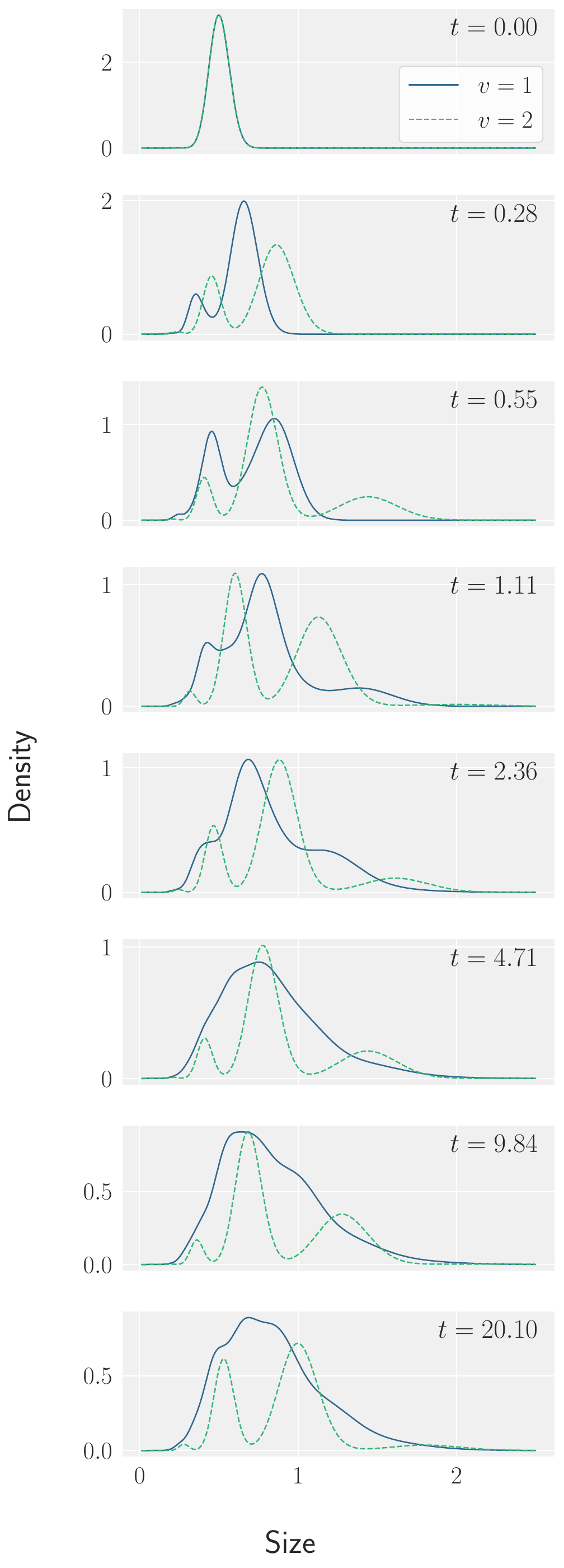}
    \caption{Fastest to lowest \newline $p=0.8$}
    \label{fig:approx_n_new_3}
  \end{subfigure}
  \caption{Time evolution of the size distribution per feature, either
    the whole distribution at a few times (\textit{bottom}) or at many
    times but only at size $x=1$ (\textit{top}). Obtained with
    coefficients $\tau (v,x) = vx$, $\gamma(v,x) = x^2 \tau(v,x) $ and
    the variability kernels %
    $\kappa^{_{S t F}}(0.5)$, 
    $\kappa^{_{F t S}}(0.2)$ 
    and $\kappa^{_{F t S}}(0.8)$. 
    The corresponding Malthus parameters were estimated to be (up to
    $10^{-3}$ precision) $v_2=2$, $v_1=1$, and
    $\lambda_{2, 0.8} \approx 1.356$, respectively, in accordance with
    the conjecture (see \Cref{table1:conjecture}).}
  \label{fig:approx_n_new}
\end{figure}

\FloatBarrier

\section{Proofs}
\label{sec:proof}

\subsection{Proof of Theorem~\ref{thm:existence_eig}}

We adapt the existence proof of~\cite[Theorem
1]{doumic_jauffret_eigenelements_2010} for the setting with no
variability to our case. It basically consists in obtaining uniform
estimates on a truncated problem, whose existence is proved through
regularization and {the} Kre\u{\i}n-Rutman theorem, as follows:
\begin{enumerate}[label={\roman*)}, leftmargin=.7cm, parsep=0cm,
  itemsep=0.1cm, topsep=0cm]
\item\label{enum:proof_eig1} we \enquote{truncate}
  problem~\ref{pb:GFv} to work with a nicer \enquote{truncated
    problem}, \ref{pb:GFv_trunc} defined on the compact set
  $\cV \times (0, R)$ and having positive boundary condition,
  
\item\label{enum:proof_eig2} {regularize~\ref{pb:GFv_trunc}}
  through convolution by a mollifier sequence
  $( \rho_\ep )_{\ep>0}$ so that {the}
  Kre\u{\i}n-Rutman {theorem} can apply and provide existence of
  eigenelements
  $(\lambda^\ep, N^\ep, \phi^\ep)$,
  
\item\label{enum:proof_eig3} derive estimates on these eigenelements
  to be able to pass to the limit $\ep \rightarrow 0$ and
  obtain eigenelements
  $(\lambda^{\eta,\delta}, N^{\eta,\delta} ,\phi^{\eta,\delta})$
  solution to the truncated problem,
    
\item\label{enum:proof_eig4} then bound
    $(\lambda^{\eta,\delta} )_{\eta,\delta>0}$ in $(0, +\infty)$ to
    get a $\lambda>0$ limit when $\eta,\delta \rightarrow 0$ (and
    $R \rightarrow \infty$),

\item\label{enum:proof_eig5} derive estimates on the moments of
  $\tau^\eta N^{\eta,\delta}$, uniform in $\eta$ and $\delta$, to be
  able to pass to the limit the direct problem of~\ref{pb:GFv},
  ~$\delta \rightarrow 0$ first, and then $\eta \rightarrow 0$,
    
\item\label{enum:proof_eig6} and similarly with the adjoint problem by
  uniform estimates on $\phi^{\eta,\delta}$.
\end{enumerate}

\begin{proof}\label{proof:existence_eig}
  \textbf{Step~\ref{enum:proof_eig1}.} In order to get compactness and
  positivity, both required to apply {the} Kre\u{\i}n-Rutman theorem,
  we first consider the problem on a bounded domain
  \[ \cS_R \coloneqq \cV \times [0,R], \qquad R>0,\] and second, endow
  the direct problem with a positive boundary condition $\delta$ and
  set the growth rate~$\tau$ to $\eta>0$ around $x=0$:
  \begin{equation*}
    \tau^\eta \coloneqq \left\{
      \begin{aligned}
        &\eta \quad &&\textrm{on~~} \cV \times [0, \eta],\\
        &\tau \quad &&\textrm{elsewhere},
      \end{aligned} \right.
  \end{equation*}
  to make it bounded from below on $\cS_R$ by the positive constant
  \begin{equation}\label{eq:def_mu}
    \mu = \mu(\eta, R)
    \coloneqq \min_{i \in \cI} \Bigl(
    \underset{(0,R)}{\infess} \tau_i^\eta \Bigr) >0.
  \end{equation}
  Following~\ref{as:tau-gamma}, we also define on $\R_+$ the functions
  $\gamma^\eta_i \coloneqq \beta \tau_i^\eta$, for $i$ in $\cI$.

  The truncation of the adjoint problem simply follows as the adjoint
  eigenproblem of the truncated direct eigenproblem endowed with null
  boundary condition at~\smash{$x=R$}, and finally the whole
  \enquote{truncated} problem states as follow: $\forall i \in \cI$,
  $\nae x \in (0,R)$,
  \begin{equation*}\label{pb:GFv_trunc}
    \tag*{\textnormal{(GF$^{\eta,\delta}_{\! v}$)}}
    \left\{~
      \begin{aligned} 
        &\mpdvx \big[ \tau_i^\eta (x) N_i^{\eta,\delta} (x) \big] +
        \big( \lambda^{\eta,\delta} + \gamma_i^\eta (x) \big)
        N_i^{\eta,\delta}(x) = 4 \msum{j \in \cI} \Big(
        \gamma_j^\eta(2x) N_j^{\eta,\delta}(2x) \kappa_{ji} \Big)
        \mathds{1}_{\tCinterval{0}{R}}(2x),\\[-4pt]
        &\tau_i^\eta(0) N_i^{\eta,\delta}(0) = \delta, \qquad \msum{j
          \in \cI} \Big( \mint_0^R \!  N_j^{\eta,\delta}(s) \dd{s}
        \Big) = 1,
        \qquad N_i^{\eta,\delta}(x) > 0,\\[-1pt]
        &-\! \tau_i^\eta(x) \mpdvx \phi_i^{\eta,\delta}(x) + \big(
        \lambda^{\eta,\delta} \! + \! \gamma_i^\eta (x) \big)
        \phi_i^{\eta,\delta}(x) = 2 \gamma_i^\eta (x) \msum{j \in \cI}
        \Big( \phi_j^{\eta,\delta} \big(\mfrac{x}{2} \big) \kappa_{ij}
        \Big)
        +\delta \phi_i^{\eta,\delta}(0),\\[-4pt]
        &\phi_i^{\eta,\delta}(R)=0, \qquad \msum{j \in \cI} \Big(
        \mint_0^R \!  N_j^{\eta,\delta}(s) \phi_j^{\eta,\delta}(s)
        \dd{s} \Big) = 1, \qquad \phi_i^{\eta,\delta}(x) \geq 0.
      \end{aligned}
    \right.
  \end{equation*}  
  From now on, $\delta$ and $\eta$ being to be brought to zero, we
  choose them lower than $1$.

  \textbf{Step~\ref{enum:proof_eig2}.} Besides compactness and
  positivity, one needs to work in a space whose positive cone has
  non-empty interior to apply the strong form of
  Kre\u{\i}n-Rutman's theorem~\cite{perthame_transport_2007,
    dautray_mathematical_1999}. In particular, $L^p$ spaces are not
  appropriate but the space of continuous functions is.
  Let us thus regularize~\ref{pb:GFv_trunc} in order to work in
  $ \sC ([0,R])^{\texp{M}}$ endowed with the norm
  in~$\ell^\infty \big(\cI ; \,\sC([0,R]) \big)$ or equivalently
  in~$\ell^1 \big(\cI ; \,\sC([0,R]) \big)$:
  \begin{equation}\label{eq:definition_norms}
    \lN n \rN_{\ell^\infty} \coloneqq \max_{i \in \cI} \lN n_i
    \rN_{\infty}, \qquad
    \lN n \rN_{\ell^1} \coloneqq \msum{i \in \cI} \lN n_i \rN_{\infty},
  \end{equation}
  where $\lN \cdot \rN_\infty$ refers to the supremum norm in
  $\sC([0,R])$.  We define, for
  $\lp \rho_\ep \rp_{\ep>0}$ a sequence of mollifiers
  and $\, * \,$ the convolution product\footnote{For any
    $L^1_{loc}(\R)$ function $f$, the convolution product $*$ is
    defined for all $\varphi$ in $\sC_c(\R)$ (the space of continuous
    compactly functions on $\R$), as
    $f * \varphi : x \mapsto \int_{-\infty}^{+\infty} f(s)
    \varphi(s-x) \dd{s}$. For $f$ only defined on $\R_+$,
    $f * \varphi$ makes sense as the truncation to $\R_+$ of the
    convolution between $f$ extended to $\R$ by zero and $\varphi$ in~
    $\sC_c (\R_+)$.}:
  \begin{gather*}
    \beta^\ep \coloneqq \rho_\ep * \beta, \; \qquad
    \mathds{1}^\ep_{\tCinterval{0}{R}} \coloneqq \rho_\ep *
    \mathds{1}_{\tCinterval{0}{R}} \; \qquad \tau_i^\ep \coloneqq
    \rho_\ep * \tau_i^\eta, \quad \gamma^\ep_i
    \coloneqq \beta^\ep \tau^\ep_i, \quad \forall i \in \cI,
  \end{gather*}
  omitting $\eta$ to lighten notations. The sequence is chosen with
  $\textrm{supp}(\rho_\ep) \subset [-\ep, 0]$, for all
  positive $\ep$, such that
  $\mathds{1}^\ep_{\tCinterval{0}{R}}$ cancels on
  $(R, \textstyle{+\infty )}$ (and is $1$ on
  $[0,\textstyle{R-\ep]}$), which gives sense to the
  regularized truncated equation~\ref{pb:GFv_trunc_reg} defined only
  for $x \leq R$. We also mention
  \begin{equation}\label{eq:prop_convolution_cv}
    \beta^\ep \underset{\ep \rightarrow
      0}{\longrightarrow} \beta
    \quad L^1(0,R),
    \qquad \; \tau_i^\ep
    \underset{\ep \rightarrow 0}{\longrightarrow}
    \tau_i^\eta \quad L^1(0,R), \quad \forall i \in \cI,
  \end{equation}
  from the properties of the convolution with
  mollifiers~\cite{brezis_functional_2011}, 
  as well as the inequalities:
  \begin{equation}\label{eq:prop_convolution_ineq}
    \Norm{\beta^\ep}{L^1(0,R)}  \leq 2
    \Norm{\beta}{L^1(0,R)} \text{~$\ep$ small enough}, \qquad
    \mu \leq \tau^\ep_i \leq \Norm{\tau^\eta}{L^
      \infty(\cS_R)}, \; \forall i \in \cI.
  \end{equation}
  In particular $(\tau^\ep)_{\ep>0}$, bounded in
  $L^\infty([0,R])$, converges in $L^\infty([0,R])$ for the weak*
  topology (towards $\tau^\eta$) so that we can also prove
  $L^1$-convergence for $(\gamma^\ep)_{\ep > 0}$:
  $\forall i \in \cI$,
  \begin{equation*}
    \Norm{\gamma_i^\ep-\gamma_i}{L^1(0,R)} \leq
    \Norm{\tau^\eta}{L^\infty(\cS_R)}
    \Norm{\beta^\ep-\beta}{L^1(0,R)} + \Norm{\beta
      (\tau_i^\ep-\tau_i^\eta)}{L^1(0,R)}
    \underset{\ep \rightarrow 0}{\longrightarrow} 0.
  \end{equation*}
  Conditions are now gathered to use {the} Kre\u{\i}n-Rutman theorem
  and derive the existence of a solution
  $(\lambda^\ep,N^\ep,\phi^\ep)$ to the following regularized problem
  (written omitting $\eta$ and $\delta$, again for simplicity). The
  proof is left to Subsection~\ref{ssec:proof_existence}.
  \begin{theorem}\label{thm:existence_trunc_reg} Under
    assumptions~\ref{as:tau}~\ref{as:gamma}~\ref{as:beta}
    and~\ref{as:kappa_cv}, for $\,\delta M R < \mu$ and
    $\ep >0$, there exists a unique solution
    $(\lambda^\ep,N^\ep,\phi^\ep) \in \R
    \times \sC^1 ([0,R])^{\texp{M}}\times \sC^1([0,R])^{\texp{M}} $ to
    the regularized eigenproblem: $\forall i \in \cI$,
    $\forall x \in [0,R)$,
    \begin{equation*}\label{pb:GFv_trunc_reg}
      \tag*{\textnormal{(GF$^{\ep}_{\!\scriptscriptstyle v}$)}}
      \left\{~
        \begin{aligned}
          &\mpdvx \big[ \tau_i^\ep(x) N_i^\ep (x)
          \big] + \big( \lambda^\ep + \gamma_i^\ep (x)
          \big) N_i^\ep(x) = 4 \msum{j \in \cI} \big(
          \gamma_j^\ep(2x) N_j^\ep (2x) \kappa_{ji}
          \big) \mathds{1}^\ep_{\tCinterval{0}{R}}(2x),\\[-4pt]
          &\tau_i^\ep N_i^\ep(0) = \delta, \qquad
          \msum{j \in \cI} \Big( \mint_0^R N_j^\ep(s) \dd{s}
          \Big) = 1,
          \qquad N_i^\ep(x) > 0,\\[-1pt]
          &-\tau_i^\ep(x) \mpdvx \phi_i^\ep(x) + \big(
          \lambda^\ep + \gamma_i^\ep(x) \big)
          \phi_i^\ep(x) = 2 \gamma_i^\ep (x) \msum{j
            \in \cI} \phi_j^\ep \big( \mfrac{x}{2} \big)
          \kappa_{ij}
          +\delta \phi_i^\ep(0),\\[-4pt]
          &\phi_i^\ep(R)=0, \qquad \msum{j \in \cI} \Big(
          \mint_0^R N_j^\ep(s) \phi_j^\ep(s) \dd{s}
          \Big) = 1, \qquad \phi_i^\ep(x) > 0.
        \end{aligned}
      \right.    
    \end{equation*}
  \end{theorem}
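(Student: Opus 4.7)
The plan is to apply the strong version of the Kre\u{\i}n-Rutman theorem (see e.g.~\cite{dautray_mathematical_1999}) separately to the direct and adjoint parts of~\ref{pb:GFv_trunc_reg}, each recast as a fixed-point equation for a compact, strongly positive linear operator on $\sC([0,R])^M$ endowed with the norms of~\eqref{eq:definition_norms}.

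For the direct problem, to handle the inhomogeneous boundary condition $\tau^\ep_i N_i(0) = \delta$ within a linear framework, I would introduce, for each $\lambda \in \R$, the operator $T_\lambda : \sC([0,R])^M \to \sC([0,R])^M$ sending $g$ to the unique $N$ solving
\begin{equation*}
  \mpdvx \bigl( \tau^\ep_i N_i \bigr) + (\lambda + \gamma^\ep_i) N_i
  = 4 \msum{j \in \cI} \gamma^\ep_j(2\cdot) g_j(2\cdot) \kappa_{ji}\,
  \mathds{1}^\ep_{\tCinterval{0}{R}}(2\cdot),
  \qquad \tau^\ep_i(0) N_i(0) = \delta \msum{j \in \cI} \mint_0^R g_j.
\end{equation*}
The boundary datum is linear in $g$ and coincides with $\tau^\ep_i N_i(0) = \delta$ exactly when $\sum_j \int_0^R g_j = 1$, so any positive eigenvector of $T_\lambda$ at eigenvalue $1$, rescaled by its $L^1$-mass, is a solution to~\ref{pb:GFv_trunc_reg} for the eigenvalue~$\lambda$. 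Since $\mu \leq \tau^\ep_i \leq \Norm{\tau^\eta}{L^\infty}$, the integrating-factor formula makes $T_\lambda$ well defined, produces $T_\lambda g \in \sC^1([0,R])^M$ with $\sC^1$-norm linearly controlled by $\|g\|_{\ell^\infty}$ (hence compactness on $\sC([0,R])^M$ by Arzel\`a-Ascoli), and yields strong positivity: if $g \geq 0$ and $g \not\equiv 0$, the boundary term $\delta \sum_j \int g_j > 0$ forces $(T_\lambda g)_i(x) > 0$ on the whole of $[0,R]$ for every $i \in \cI$, placing $T_\lambda g$ in the interior of the positive cone.

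Kre\u{\i}n-Rutman then supplies, for every $\lambda$, a unique simple principal eigenvalue $\rho(\lambda) > 0$ with a positive eigenvector. As $\lambda \mapsto T_\lambda$ is pointwise decreasing on the positive cone (a larger $\lambda$ tightens the exponential decay in the integrating-factor formula), the Collatz--Wielandt characterization makes $\rho$ decreasing and continuous in~$\lambda$; direct inspection of the explicit formula further gives $\rho(\lambda) \to +\infty$ as $\lambda \to -\infty$ and $\rho(\lambda) \to 0$ as $\lambda \to +\infty$. The intermediate value theorem thus yields a unique $\lambda^\ep$ with $\rho(\lambda^\ep) = 1$, and the corresponding positive eigenvector, normalized by $\sum_j \int_0^R N_j = 1$, is the sought $N^\ep$. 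The technical hypothesis $\delta M R < \mu$ enters here: it guarantees $\sum_i N^\ep_i(0) \cdot R \leq M\delta/\mu < 1$, so the normalization, the positivity of $N^\ep$ on all of $[0,R]$, and the fixed-point relation are compatible.

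The adjoint part is treated by the same recipe: define $S_\lambda : h \mapsto \phi$ as the unique solution of the backward ODE
\begin{equation*}
  -\tau^\ep_i \mpdvx \phi_i + (\lambda + \gamma^\ep_i) \phi_i
  = 2 \gamma^\ep_i \msum{j \in \cI} \kappa_{ij} h_j \bigl( \mfrac{\cdot}{2} \bigr) + \delta h_i(0),
  \qquad \phi_i(R) = 0,
\end{equation*}
integrated from $x = R$ backwards. The constant source $\delta h_i(0)$ plays for $S_\lambda$ the role that $\delta \sum_j \int g_j$ played for $T_\lambda$, so strong positivity follows as above and compactness again from the $\sC^1$ bound; Kre\u{\i}n-Rutman then produces a simple principal eigenvalue $\rho^*(\lambda) > 0$ with a positive eigenvector. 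A direct integration by parts of the direct equation against $\phi$, exploiting the cancellation $[\phi_i \tau^\ep_i N_i]_0^R = - \delta \sum_i \phi_i(0)$ with the contribution of the constant source $\delta h_i(0)$, identifies $T_\lambda$ and $S_\lambda$ as formal adjoints on $\sC([0,R])^M$, so $\rho^*(\lambda) = \rho(\lambda)$ and $\rho^*(\lambda^\ep) = 1$. The corresponding positive eigenvector, normalized via $\sum_j \int_0^R N^\ep_j \phi^\ep_j = 1$, yields $\phi^\ep$, and uniqueness of $(\lambda^\ep, N^\ep, \phi^\ep)$ follows from the simplicity of each principal eigenvalue. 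In my view the main delicate step is to locate $\lambda^\ep$ through the monotonicity-and-limits argument on $\rho$ while keeping the constraint $\delta M R < \mu$ in force; the compactness and strong positivity checks and the adjoint identification are routine once the operators are written explicitly.
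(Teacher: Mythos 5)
Your strategy for the direct problem --- a one-parameter family of explicitly solvable operators $T_\lambda$, Kre\u{\i}n--Rutman applied to each, and a shooting argument locating $\lambda^\ep$ as the unique zero of $\rho(\cdot)-1$ --- is a genuinely different route from the paper's. The paper applies Kre\u{\i}n--Rutman once, to a resolvent-type operator $\cG_\ep\colon f\mapsto u$ in which the nonlocal term $u_j(2x)$ \emph{and} the boundary value $u_i(0)=\delta\sum_j\int_0^R u_j/\tau_j^\ep$ involve the unknown $u$ itself; $\cG_\ep$ must first be shown well defined by a Banach fixed point, which is exactly where $\delta M R<\mu$ is used (together with a choice $\alpha\ge\alpha_{0}$), and the eigenvalue then comes out directly as $\lambda^\ep=\alpha-1/\alpha^\ep$ with no shooting. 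Your route trades that contraction estimate for the monotonicity-and-limits analysis of $\rho$; be aware that the limits are not quite ``direct inspection'' ($\lVert T_\lambda\rVert$ does not tend to $0$ as $\lambda\to+\infty$ because of the boundary term at $x=0$; one gets $\rho(\lambda)\le C/\lambda$ only by integrating the eigenvalue relation over $[0,R]$), and that your stated role for $\delta M R<\mu$ (``compatibility of the normalization'') is not a real constraint in your scheme --- the hypothesis is simply not needed there, which is harmless but shows the justification is spurious.

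The genuine gap is in the adjoint part. Your $S_\lambda$ carries the \emph{input} $h$ in the source $2\gamma_i^\ep\sum_j\kappa_{ij}h_j(\cdot/2)+\delta h_i(0)$, so it is not strongly positive: take $h\ge 0$, $h\not\equiv 0$, supported in $(R/2,R]$; then $h_i(0)=0$ and $h_j(x/2)=0$ for all $x\in[0,R]$, the source vanishes identically, and $S_\lambda h\equiv 0$. Moreover $(S_\lambda h)_i(R)=0$ for every input, so the image never meets the interior of the positive cone of $\sC([0,R])^{M}$. Hence the strong form of Kre\u{\i}n--Rutman does not apply to $S_\lambda$, and ``strong positivity follows as above'' fails: the mechanism that saves $T_\lambda$ --- the boundary value $\delta\sum_j\int g_j$ is positive for every $g\gneq 0$ and is propagated by the homogeneous solution to all of $[0,R]$ --- has no analogue, because $\delta h_i(0)$ can vanish for nonzero nonnegative $h$. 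This is precisely why the paper keeps the terms $\phi_j(x/2)$ and $\delta\phi_i(0)$ on the unknown side (defining the adjoint operator implicitly, after the reversal $x\mapsto R-x$ which turns it into a forward problem of the same type as the direct one): a nonnegative nonzero external source then forces the self-consistent term $\delta\phi_i(0)$ to be positive, which spreads positivity over the whole interval. To repair your argument you would either have to adopt the same implicit formulation for the adjoint, or obtain $\phi^\ep$ as the positive eigenfunctional of the dual operator furnished by Kre\u{\i}n--Rutman at the eigenvalue $\rho(\lambda^\ep)=1$ and then identify it with a $\sC^1$ function; as written, the adjoint half of the statement is not proved.
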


  From now on, we arbitrarily impose on the truncation parameters to
  satisfy
  \begin{equation}\label{eq:cond_parametres_trunc}
    2 \delta M R = \mu
  \end{equation}
  so that the condition $\,\delta M R < \mu$ holds.  As a result, it
  should be noted that for $\eta$ fixed and $\delta$ tending toward
  zero, $R$ tend to infinity. This is not completely direct since
  $\mu = \mu(\eta, R)$ does not only depend on $\eta$. To be
  convinced, one can notice that~\ref{as:tau_in_inf} ensures:
  \begin{equation*}
    \exists \omega \geq 0, \;  C>0, \; A > \eta \quad :
    \quad \min_{i \in \cI} \Bigl( \infess_{(A,R)}
    \tau_i^\eta \Bigr) \geq \frac{C}{R^{\omega}}, \quad \forall R>A,
  \end{equation*}
  while~\ref{as:tau_positive} provides $\tau=\tau^\eta$ with a lower
  bound on $\cV \times [\eta,A]$, say
  \smash{$m_{\tCinterval{\eta}{A}}$}. At the end, together with
  condition~\eqref{eq:cond_parametres_trunc} we have
  $\, 2\delta M R = \mu > \min \bigl( \eta, m_{\tCinterval{\eta}{A}},
  \frac{C}{R^\omega} \bigr)$ making things clear.
  
  It follows that $R$ can be seen not as a parameter anymore but as
  functions of parameters~$\eta$ and $\delta$, and then $\mu$ as well
  from~\eqref{eq:def_mu}: $R = R(\eta,\delta)$,
  $\mu = \mu(\eta,\delta)$.
  
  \textbf{Step~\ref{enum:proof_eig3}.} To find a solution to
  problem~\ref{pb:GFv_trunc} it remains to bring~$\ep$ to zero
  in the weak formulation of~\ref{pb:GFv_trunc_reg}. We rely on the
  following estimates, derived in
  Subsection~\ref{ssec:proof_KR_estimates}.
  \begin{lemma}\label{lemma:unif_bound_eigenels_varep}
    For $\ep$, $\delta$ and $\eta$ fixed positive, there
    exists constants $\lambda_{up}$, $N_{low}$, $N_{up}$, and
    $\phi_{up}$, depending on $\eta$ and $\delta$ but
    \emph{independent of $\ep$}, such that:
    $\forall i \in \cI$, $\forall x \in [ 0, R ]$,
    \begin{equation*}
      0 < \lambda^\ep \leq \lambda_{up},
      \qquad  0 < N_{low} \leq N_i^\ep(x) \leq N_{up},
      \quad 0 \leq \phi_i^\ep(x)
      \leq \phi_{up}.
    \end{equation*}
  \end{lemma}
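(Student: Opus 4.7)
The plan is to read off each uniform bound directly from the renormalised system~\ref{pb:GFv_trunc_reg}, exploiting (i)~the normalisations of $N^\varepsilon$ and $N^\varepsilon \phi^\varepsilon$, (ii)~the strict lower bound $\tau_i^\varepsilon \geq \mu > 0$ inherited from the truncation, and (iii)~the uniform-in-$\varepsilon$ control on the regularised coefficients provided by~\eqref{eq:prop_convolution_ineq}, together with $\beta \in L^\infty_{loc}(\R_+)$ (from~\ref{as:gamma} and~\ref{as:beta}) and $\tau^\eta \in L^\infty(\cS_R)$ (from~\ref{as:tau_in_inf}); the strict positivity of $N^\varepsilon$ and $\phi^\varepsilon$ is already granted by~\Cref{thm:existence_trunc_reg}.

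For the upper bound on $\lambda^\varepsilon$, I would integrate the direct equation of~\ref{pb:GFv_trunc_reg} over $[0,R]$, sum over $i \in \cI$ using $\sum_i \kappa_{ji} = 1$ from~\ref{as:kappa_cv}, and apply the change of variable $y = 2x$ in the fragmentation term to obtain
\begin{equation*}
\lambda^\varepsilon + \msum{i \in \cI} \tau_i^\varepsilon(R) N_i^\varepsilon(R) = M \delta + \msum{j \in \cI} \mint_0^R \gamma_j^\varepsilon(y) N_j^\varepsilon(y) \bigl( 2 \mathds{1}^\varepsilon_{\tCinterval{0}{R}}(y) - 1 \bigr) \dd{y}.
\end{equation*}
Combined with $\sum_{i \in \cI} \int_0^R N_i^\varepsilon = 1$ and $\|\gamma^\varepsilon\|_{L^\infty(0,R)} \leq \|\beta\|_{L^\infty(0,R)} \|\tau^\eta\|_{L^\infty(\cS_R)}$, this delivers an $\varepsilon$-independent upper bound $\lambda^\varepsilon \leq M\delta + \|\gamma^\varepsilon\|_{L^\infty(0,R)} \eqqcolon \lambda_{up}$. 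Positivity $\lambda^\varepsilon > 0$ follows from the Krein--Rutman setup used in~\Cref{thm:existence_trunc_reg}, where $(\lambda^\varepsilon, N^\varepsilon, \phi^\varepsilon)$ is obtained as a principal eigentriplet of a positivity-preserving compact operator.

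For the pointwise bounds on $N^\varepsilon$, stopping the same computation at an arbitrary $x \in [0,R]$ and discarding the non-negative contributions $\lambda^\varepsilon \int N_i^\varepsilon$ and $\int \gamma_i^\varepsilon N_i^\varepsilon$ gives $\tau_i^\varepsilon(x) N_i^\varepsilon(x) \leq \delta + 2 \|\gamma^\varepsilon\|_{L^\infty(0,R)}$, hence $N_i^\varepsilon \leq N_{up}$ after division by $\tau_i^\varepsilon \geq \mu$. For the lower bound I would rewrite the direct equation as a first-order linear ODE in $F_i \coloneqq \tau_i^\varepsilon N_i^\varepsilon$ with non-negative source, boundary datum $\delta$, and non-negative integrating exponent $A_i(x) \coloneqq \int_0^x (\lambda^\varepsilon/\tau_i^\varepsilon + \beta^\varepsilon)$, yielding $F_i(x) \geq \delta \e^{-A_i(R)}$. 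The upper bound on $\lambda^\varepsilon$ combined with $\tau_i^\varepsilon \geq \mu$ and $\|\beta^\varepsilon\|_{L^1(0,R)} \leq 2 \|\beta\|_{L^1(0,R)}$ controls $A_i(R)$ uniformly in~$\varepsilon$, hence $N_i^\varepsilon \geq N_{low} > 0$ uniformly.

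The main obstacle is the uniform $L^\infty$-bound on $\phi^\varepsilon$. The normalisation and $N_i^\varepsilon \geq N_{low}$ immediately yield the $L^1$-bound $\sum_i \int_0^R \phi_i^\varepsilon \leq 1/N_{low}$, but promoting this to $L^\infty$ is delicate because the adjoint equation couples $\phi_i^\varepsilon(x)$ to both $\phi_j^\varepsilon(x/2)$ and the unknown boundary value $\phi_i^\varepsilon(0)$, precluding a direct Gr\"onwall. I would integrate the adjoint backward from $\phi_i^\varepsilon(R) = 0$ with integrating factor $\e^{-B_i}$, $B_i(x) \coloneqq \int_0^x (\lambda^\varepsilon + \gamma_i^\varepsilon)/\tau_i^\varepsilon$, to obtain the implicit representation
\begin{equation*}
\phi_i^\varepsilon(x) = \mint_x^R \e^{B_i(x) - B_i(s)} \Bigl[ 2 \beta^\varepsilon(s) \msum{j \in \cI} \phi_j^\varepsilon\big( \mfrac{s}{2} \big) \kappa_{ij} + \mfrac{\delta}{\tau_i^\varepsilon(s)} \phi_i^\varepsilon(0) \Bigr] \dd{s}.
\end{equation*}
Evaluated at $x = 0$, the calibration $2 \delta M R = \mu$ from~\eqref{eq:cond_parametres_trunc} pins the coefficient of $\phi_i^\varepsilon(0)$ below $1/(2M)$, so it can be moved to the left-hand side and isolates a bound on $\phi_i^\varepsilon(0)$ in terms of $\|\beta^\varepsilon\|_{L^1(0,R)}$ and $P \coloneqq \max_i \|\phi_i^\varepsilon\|_\infty$. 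The argument is then closed by iterating the representation on the dyadic sub-intervals $[2^{-k-1} R, 2^{-k} R]$, on which the coupling term $\phi_j^\varepsilon(\cdot/2)$ only probes already-controlled sub-intervals, to propagate the bound from a neighbourhood of $0$ up to $R$; the $L^1$-bound is used to prevent the crude estimate from blowing up through the iteration, eventually extracting the uniform constant $\phi_{up}$.
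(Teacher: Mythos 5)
Your plan is sound in outline but has two substantive gaps and one recurring technical flaw.

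\textbf{Positivity of $\lambda^\ep$.} You assert that $\lambda^\ep>0$ ``follows from the Krein--Rutman setup.'' It does not. Krein--Rutman gives a positive spectral radius $\alpha^\ep>0$ of the resolvent-type operator $\cG_\ep$, and $\lambda^\ep=\alpha_{\texp{0}}-1/\alpha^\ep$; this is positive only if $\alpha^\ep>1/\alpha_{\texp{0}}$, which is not automatic and in fact requires the structural assumption~\ref{as:beta_in_inf}. The paper proves $\lambda^\ep>0$ by contradiction: assuming $\lambda^\ep\le 0$, integrating the direct equation and applying Gr\"onwall forces $\sum_i\tau_i^\ep N_i^\ep(x)\ge \delta M \e^{\int_0^x\beta^\ep}$, which together with $x\beta(x)\to\infty$ and \ref{as:tau_in_inf} contradicts $\sum_i\int_0^R N_i^\ep=1$ once $R$ is large enough (and $R=R(\eta,\delta)$ is indeed large by~\eqref{eq:cond_parametres_trunc}). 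That argument is not optional.

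\textbf{Uniform $L^\infty$ bound on $\phi^\ep$.} Your dyadic propagation scheme does not close. For $x$ in any sub-interval $[2^{-k-1}R,2^{-k}R]$, the representation $\phi_i^\ep(x)=\int_x^R(\cdots)\phi_j^\ep(s/2)\,ds+\cdots$ has $s$ ranging over $[x,R]$, so $s/2$ ranges over $[x/2,R/2]$: no matter how small $x$ is, the coupling probes $\phi$ on the \emph{whole} interval $[x/2,R/2]$, which is never ``already controlled'' in a bottom-up sweep. The workable route is the same split-and-absorb used for $N^\ep$: fix $\ep_{\texp{0}}>0$ with $\|\beta\|_{L^1(0,\ep_{\texp{0}})}$ small, split $\int_x^R\beta^\ep(s)\phi_j^\ep(s/2)\,ds$ at $\ep_{\texp{0}}$, bound the near-zero piece by $\|\beta^\ep\|_{L^1(0,\ep_{\texp{0}})}P$ with $P:=\max_i\|\phi_i^\ep\|_\infty$, bound the tail using $\|\beta^\ep\|_{L^\infty(\ep_{\texp{0}},R)}$ together with your (correct) $L^1$-bound $\sum_j\|\phi_j^\ep\|_{L^1}\le 1/N_{low}$, and absorb the two $P$-contributions (the near-zero piece and the $\phi_i^\ep(0)$-piece, the latter $\le (2M)^{-1}P$ by~\eqref{eq:cond_parametres_trunc}) into the left-hand side. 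This is precisely what ``proceed the same way'' in the paper refers to.

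\textbf{The $L^\infty(0,R)$ bounds on $\gamma^\ep$ and $\beta$.} In the upper bound for $\lambda^\ep$ and in the pointwise bound for $N^\ep$ you use $\|\gamma^\ep\|_{L^\infty(0,R)}\le\|\beta\|_{L^\infty(0,R)}\|\tau^\eta\|_{L^\infty(\cS_R)}$, citing ``$\beta\in L^\infty_{loc}(\R_+)$.'' But the assumptions only give $\beta\in L^1_0$ and $\gamma\in\cP_\infty\subset L^\infty_{loc}(0,+\infty)$ -- the open half-line. When $b=0$, $\beta$ may have an integrable singularity at the origin; moreover $\gamma^\eta=\eta\beta$ near $0$, so $\gamma^\eta$ (hence $\gamma^\ep$ uniformly in $\ep$) is genuinely unbounded there. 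This is why the paper splits every such integral at an interior point $\ep_{\texp{0}}>0$, using $\|\beta^\ep\|_{L^1(0,\ep_{\texp{0}})}$ near zero and $\|\gamma^\ep\|_{L^\infty(\ep_{\texp{0}},R)}$ elsewhere. Your integrated identity for $\lambda^\ep$ is correct, and the paper instead reads the upper bound directly from the fixed-point estimate~\eqref{eq:fix_point_continuity} applied to $f=u^\ep$, giving $\lambda^\ep\le k\alpha_{\texp{0}}\le\alpha_{\texp{0}}$ without touching $\|\gamma^\ep\|_{L^\infty}$ at all; if you want to keep your integral route, you must incorporate the same split.

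Your observations on the lower bound $N_i^\ep\ge N_{low}$, on the $L^1$ bound for $\phi^\ep$ from the normalisation and $N_{low}$, and on isolating $\phi_i^\ep(0)$ using $2\delta MR=\mu$ are all correct and useful; they just need to be fed into the split-at-$\ep_{\texp{0}}$ fixed-point rather than a dyadic sweep.
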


  \textit{Eigenvalue.} From $(\lambda_\ep)_\ep$
  bounded in $(0, +\infty)$ we extract a subsequence, still denoted
  $(\lambda_\ep)_\ep$ for simplicity, that converges
  to some non-negative $\lambda^{\eta, \delta}$.

  \textit{Eigenvectors.} Similarly, since for all $i$ in $\cI$
  families $(N_i^\ep)_{\ep>0}$ and
  $(\phi_i^\ep)_{\ep>0}$ are bounded in
  $L^\infty(0,R)$ 
  we can extract from $(N^\ep)_{\ep>0}$
  and~$(\phi^\ep)_{\ep>0}$ (through $M$ successive
  extractions) subsequences, denoted the same, that converge
  component-wise weakly* in $L^\infty (0,R)$:
  \begin{equation}\label{eq:CV_faible-*_N-phi}
    \mint_0^R  N_i^{\ep}\varphi  \;
    \underset{\ep \rightarrow 0}{ \longrightarrow}
    \; \mint_0^R N_i \varphi,
    \qquad  \mint_0^R \phi_i^{\ep} \varphi
    \; \underset{\ep \rightarrow 0}{ \longrightarrow} \;
    \mint_0^R \phi_i \varphi, \qquad \forall \varphi \in L^1(0,R),
    \quad \forall i \in \cI.
  \end{equation}

  Let us check that $(\lambda, N, \phi)$ is (weak) solution
  to~\ref{pb:GFv_trunc}:
  \begin{enumerate}[leftmargin=.7cm, parsep=0cm, itemsep=0.1cm,
    topsep=0cm]
  \item From the bound from below on
    $(N^{\ep})_{\ep>0}$ and
    $(\phi^{\ep})_{\ep>0}$ we have that $N$ positive,
    bounded from below by $N_{low}$, and $\phi$ non-negative.
 
  \item We prove that $N$ satisfies the direct equation
    of~\ref{pb:GFv_trunc}. As classical solution $N^\ep$~is
    weak solution to~\ref{pb:GFv_trunc_reg}:
    \smash{$\forall \varphi \in \sC_c^\infty\big( \lbrack 0,R )
      \big)^{\texp{M}}$},
    \begin{multline}\label{pb:GFv_trunc_rg_WF}
      \msum{i \in \cI} \mint_0^R N_i^\ep (x) \Big(-
      \tau_i^\ep(x) \mpdvx \varphi_i(x) +
      \big(\lambda^\ep + \gamma_i^\ep(x) \big)
      \varphi_i(x) \Big) \dd{x}
      - \delta \msum{i \in \cI} \varphi_i (0)\\
      = 2 \msum{i \in \cI} \mint_0^R \gamma_i^\ep(x)
      N_i^\ep (x)
      \mathds{1}^{\ep}_{\tCinterval{0}{R}}(x) \bigg( \msum{j
        \in \cI}\kappa_{ij} \varphi_j\Big(\mfrac{x}{2} \Big) \bigg)
      \dd{x},
    \end{multline}
    and we show that the limit $\ep \rightarrow 0$ exits and
    is nothing but the weak formulation of the direct problem
    of~\ref{pb:GFv_trunc}. For the first term we have:
    \begin{multline*}
      \bigg\lvert \msum{i \in \cI} \mint_0^R \Big(
      \tau_i^{\ep} N_i^{\ep} - \tau_i^\eta N_i \Big)
      \mpdvx \varphi_i \,\bigg\rvert \leq \msum{i \in \cI} \Big(
      \Norm{\tau_i^{\ep}-\tau_i^\eta}{L^1(0,R)} \Big) \lN
      \mpdvx \varphi \rN_{\ell^\infty} N_{up} \\ + \bigg\lvert \msum{i
        \in \cI} \mint_0^R \tau_i^\eta \big( N_i^{\ep} - N_i
      \big) \mpdvx \varphi_i \,\bigg\rvert,
    \end{multline*}
    with $\tau_i^\eta \mpdvx \varphi_i \ \in L^1(0,R)$, making clear,
    with~\eqref{eq:prop_convolution_cv}
    and~\eqref{eq:CV_faible-*_N-phi}, that right-hand side tends
    towards zero as $\, \ep$ does.
    The convergence also holds for the other terms of the left-hand
    side of~\eqref{pb:GFv_trunc_rg_WF} since
    $(\lambda^{\ep})_{\ep>0}$ converges towards
    $\lambda^{\eta,\delta}$
    and~$(\gamma^{\ep})_{\ep>0}$ towards
    $\gamma^\eta$. 
    As for the right-hand term, we perform the same kind of
    computations relying on~\ref{as:kappa_cv}:
    \begin{multline*}
      \bigg\lvert \msum{i \in \cI} \mint_0^R \Big(
      \gamma_i^\ep(x) N_i^\ep (x)
      \mathds{1}^{\ep}_{\tCinterval{0}{R}}(x) -
      \gamma_i^\eta(x) N_i (x) \Big) \msum{j \in \cI}\kappa_{ij}
      \varphi_j\Big(\mfrac{x}{2} \Big) \dd{x} \bigg\lvert\\
      \leq \lN \varphi \rN_{\ell^\infty} N_{up} \msum{i \in \cI} \Big(
      \Norm{\gamma_i^{\ep} - \gamma_i^\eta}{L^1(0,R)} +
      \Norm{\gamma_i^\eta}{L^1 (R-\ep, R)} \Big) + \msum{i \in
        \cI} \mint_0^R ( N_i^{\ep} - N_i )
      \tilde{\varphi}_i^\eta,
    \end{multline*}
    with
    \smash{$\tilde{\varphi}_i^\eta \coloneqq \gamma_i^\eta
      \sum_{j}\kappa_{ij} \varphi_j \big(\frac{\cdot}{2} \big) \in
      L^1(0,R)$}, and again all terms tend to zero when~$\ep$
    does thanks to~\eqref{eq:prop_convolution_cv},
    $\gamma_i^\eta \in L^1_{loc}(\R_+^*)$
    and~\eqref{eq:CV_faible-*_N-phi}.
    
  \item Similarly one can prove that $\phi$ satisfies the adjoint
    equation of~\ref{pb:GFv_trunc}.

  \item Plus, testing for all $i$, $\varphi \equiv 1$ against
    $N_i^\ep$ in~\eqref{eq:CV_faible-*_N-phi}, and summing for
    $i$ in $\cI$, brings the expected normalization condition on $N$.

    As for $\phi$, we have
    $1 = \sum_i \int N_i^\ep \phi_i^\ep \leq N_{up}
    \sum_i \int \phi_i^\ep $ so $\phi$ is non-zero and
    satisfies the normalization condition up to renormalization.
  \end{enumerate}

  \textbf{Step \ref{enum:proof_eig4}. \textit{Limit as
      $\delta, \eta \rightarrow 0$ for $\lambda^{\eta,\delta}$. }}
  \hspace{-3pt}We compare $\lambda^{\eta,\delta}$ with the
  \emph{positive} eigenvalues~$\lambda_{min}^{\eta,\delta}$ and
  $\lambda_{max}^{\eta,\delta}$ solution to the reformulation
  of~\ref{pb:GFv_trunc} in {the} absence of variability, when all
  cells have feature $v_{min} \coloneqq \min (\cV)$ or
  $v_{max} \coloneqq \max(\cV)$, respectively.
  The existence and positivity of such eigenvalues in ensured by the
  proof of~\cite[Theorem~1]{doumic_jauffret_eigenelements_2010} in the
  mitosis case. Mimicking the proof of~\Cref{thm:monotonicity_V}
  (working on the truncated domain $\cS_R$ with a slightly different
  formulation in $\delta$ and $\eta$ does not change computations) we
  get:
  \begin{equation}\label{eq:bounds_lambda_eta_delta}
    0 < \lambda_{min}^{\eta,\delta} \leq \lambda^{\eta,\delta} \leq
    \lambda_{max}^{\eta,\delta}, \qquad  \eta, \delta >0.
  \end{equation}
  The proof of~\cite[Theorem~1]{doumic_jauffret_eigenelements_2010}
  also provides positive $\lambda_{min}^\eta$, $\lambda_{max}^\eta$,
  $\lambda_{min}$, and~$\lambda_{max}$, such that
  \begin{equation*}
    \lambda_{min}^{\eta,\delta} \underset{\delta \rightarrow
      0}{\longrightarrow} \lambda_{min}^\eta,
    \quad \lambda_{max}^{\eta,\delta} \underset{\delta \rightarrow
      0}{\longrightarrow} \lambda_{max}^\eta, \quad  \eta>0,
    \qquad \lambda_{min}^{\eta} \underset{\eta \rightarrow
      0}{\longrightarrow} \lambda_{min},
    \quad \lambda_{max}^{\eta} \underset{\eta \rightarrow
      0}{\longrightarrow} \lambda_{max}.
  \end{equation*}
  Passing to the limit $\delta \rightarrow 0$ and $\eta \rightarrow 0$
  successively in~\eqref{eq:bounds_lambda_eta_delta} yields to the
  existence of subsequences $(\lambda^{\eta,\delta})_{\eta,\delta>0}$
  and $(\lambda^\eta)_{\eta>0}$, denoted the same, and $\lambda$ such
  that:
  \begin{equation*}
    \lambda^{\eta,\delta} \underset{\delta \rightarrow
      0}{\longrightarrow} \lambda^\eta>0, \quad  \eta >0,
    \qquad \, \lambda^{\eta} \underset{\eta \rightarrow
      0}{\longrightarrow} \lambda >0.
  \end{equation*}

  \textbf{Step \ref{enum:proof_eig5}. \textit{Limit as
      $\delta \rightarrow 0$ for $N^{\eta,\delta}$}.} Let us fix
  $\eta >0$ and recall that $\delta$ brought to zero brings $R$ to
  infinity. To pass to the limit $\delta \rightarrow 0$, we bound the
  moments of $\tau^\eta N^{\eta,\delta}$
  in~$\ell^1 \big(\cI ;\, W^{1,1}([0,R]) \big)$, uniformly in
  $\delta$.
  To do so, we start bounding
  $(x^\alpha \gamma^\eta N^{\eta,\delta})_{\delta>0}$, 
  $\alpha \geq 0$:
  \begin{itemize}[leftmargin=.5cm, parsep=0cm, itemsep=0.1cm,
    topsep=0cm]
  \item First for $\alpha \geq m \coloneqq \max(2, \omega_0+1)$, with
    $\omega_0$ defined by~\ref{as:tau_in_0}. Multiplying the direct
    equation in~\ref{pb:GFv_trunc} by $x^\alpha$ and integrating on
    $\cS_R$ brings with a change of variables:
    \begin{equation}\label{eq:SV_xalpha_int_SR}
      \begin{aligned}
        \msum{i \in \cI} R^\alpha \tau_i^\eta(R) &
        N_i^{\eta,\delta}(R) - \alpha \msum{i \in \cI} \mint_0^R
        x^{\alpha-1} \tau_i^\eta(x) N_i^{\eta,\delta} (x) \dd{x} +
        \lambda^{\eta,\delta} \msum{i \in \cI} \mint_0^R x^\alpha
        N_i^{\eta,\delta}
        (x) \dd{x} \\
        &+ \msum{i \in \cI} \mint_0^R x^\alpha \gamma_i^\eta(x)
        N_i^{\eta,\delta}(x) \dd{x} = \mfrac{1}{2^{\alpha-1}} \msum{i
          \in \cI} \mint_0^R x^\alpha \gamma_i^\eta (x)
        N_i^{\eta,\delta}(x) \dd{x},
      \end{aligned}
    \end{equation}
    hence the inequality
    \begin{equation*}
      \Big( 1- \mfrac{1}{2^{\alpha-1}} \Big)
      \msum{i \in \cI}
      \mint_0^R  x^\alpha \gamma_i^\eta(x)  N_i^{\eta,\delta}(x) 
      \dd{x} \leq \alpha  \msum{i \in \cI} \mint_0^R
      x^{\alpha-1} \tau_i^\eta(x) N_i^{\eta,\delta} (x) \dd{x}.
    \end{equation*}
    According to assumption~\ref{as:beta_in_inf} there exists
    $A_{\alpha}\geq \eta$ such that
    \begin{equation*}
      \tau_i^\eta(x) = \tau_i(x) \leq \frac{x}{\alpha 2^\alpha}
      \gamma_i^\eta (x),
      \qquad  x \geq A_\alpha,
      \quad \forall i \in \cI.
    \end{equation*}
    By definition $x \mapsto x^{\omega_0} \tau_i(x)$ is essentially
    bounded from above on a neighborhood of $0$ and so is
    $\tau^\eta$. Combined with assumption~\ref{as:tau_in_0}, we obtain
    that for $\eta$ small enough
    \begin{equation*}
      \bigNorm{x^{\omega_0} \tau_i^\eta}{L^\infty(0,A_\alpha)}
      \leq \bigNorm{x^{\omega_0} \tau_i}{L^\infty(0,A_\alpha)}
      < + \infty, \qquad \forall i \in \cI.
    \end{equation*}
    From all these considerations, it follows that for all $i$ in
    $\cI$:
    \begin{multline*}
      \hspace{-10pt}
      \mint_0^R x^{\alpha-1} \tau_i^\eta(x) N_i^{\eta,\delta}(x) \dd{x}
      \leq A^{\alpha-1-\omega_0}_\alpha \mint_0^{A_\alpha} \! 
      x^{\omega_0} \tau_i^\eta(x) N_i^{\eta,\delta}(x) \dd{x} +
      \mint_{A_\alpha}^R \! 
      x^{\alpha-1} \tau_i^\eta(x) N_i^{\eta,\delta}(x) \dd{x}\\
      \leq A_\alpha^{\alpha-1-\omega_0} \, \bigNorm{x^{\omega_0}
      \tau_i }{L^\infty(0,A_\alpha)}
      \mint_0^{A_\alpha} \! N_i^{\eta,\delta}(x) \dd{x} +
      \mfrac{1}{\alpha 2^{\alpha}} \mint_{A_\alpha}^R x^{\alpha}
      \gamma_i^\eta(x) N_i^{\eta,\delta}(x) \dd{x}.
    \end{multline*}
    We conclude using the normalization condition on
    $N^{\eta,\delta}$: for all $\eta > 0$ small and $\alpha \geq m$
    \begin{equation}\label{eq:moment_gammaN_1}
      \msum{i \in \cI} \mint_0^R x^\alpha
      \gamma_i^\eta(x) N_i^{\eta,\delta}(x) \dd{x}
      \leq \alpha \Big( 1- \mfrac{3}{2^{\alpha}} \Big)^{-1}
      A^{\alpha-1-\omega_0}_\alpha \,
      \bigNorm{ x^{\omega_0} \tau }{\ell^1 ( \cI; L^\infty
        (0,A_\alpha))}
      \coloneqq B_\alpha.
    \end{equation}
   
  \item Then for $0 \leq \alpha < m$. To extend estimates to smaller
    $\alpha$ we make sure there is no problem around $x=0$ focusing on
    bounding
    \smash{$( \sum_{i} \tau_i^\eta N_i^{\eta,\delta})_{\delta>0}$}
    essentially around zero so the moments of
    $\sum_{i} \gamma_i^\eta N_i^{\eta,\delta}=\beta \sum_{i}
    \tau_i^\eta N_i^{\eta,\delta}$ can be bounded as well around $0$.
    Let fix $\rho$ in $(0,\tfrac{1}{2})$ and define $x_\rho>0$ (lower
    than $R$ for $R$ big) as the unique point such that
    \begin{equation}\label{eq:def_rho}
      \int_0^{x_\rho} \beta(x) \dd{x} \coloneqq \rho, 
    \end{equation}
    which is well defined since $\beta$ is non-negative integrable
    around zero from~\ref{as:beta_L0}. Integrating~\ref{pb:GFv_trunc}
    on sizes lower than any $x \in (0, x_\rho)$ and traits, gives:
    \begin{equation*}
      \begin{aligned}
        \msum{i \in \cI} &\tau_i^\eta(x) N_i^{\eta,\delta} (x) \leq
        \delta M + 2 \msum{i \in \cI} \mint_0^{2x} \gamma_i^\eta(s)
        N_i^{\eta,\delta}(s)
        \mathds{1}_{\tCinterval{0}{R}}(s) \dd{s}\\
        &\leq \delta M + 2 \, \BigNorm{ \msum{i \in \cI} \tau_i^\eta
          N_i^{\eta,\delta}}{L^\infty(0,x_\rho)} \mint_0^{x_\rho} \!
        \beta(s) \dd{s} + \frac{2}{x_\rho^m} \msum{i \in \cI}
        \mint_{x_\rho}^R s^m \gamma_i^\eta(s) N_i^{\eta,\delta}(s)
        \dd{s}.
      \end{aligned}
    \end{equation*}
    Remembering that $\delta$ has been taken inferior to $1$, it
    follows that
    \begin{equation*}
      \BigNorm{ \msum{i \in \cI}
        \tau_i^\eta N_i^{\eta,\delta}}{L^\infty (0,x_\rho)}
      \leq \frac{1}{1-2 \rho} \lp M + \frac{2 }{x_\rho^m} B_m 
      \rp \coloneqq D_0.
    \end{equation*}
    Now let us go back to
    $\big(\sum_i \int_0^R x^\alpha \gamma_i^\eta N_i^{\eta,\delta}
    \big)_{\delta, \eta}$ for $0\leq \alpha < m$. We have
    \begin{equation}\label{eq:moment_gammaN_2}
      \begin{aligned}
        \msum{i \in \cI} \mint_0^R x^\alpha \gamma_i^\eta(x)
        N_i^{\eta,\delta}(x) \dd{x} &\leq D _0\mint_0^{x_\rho} x^\alpha
        \beta(s) \dd{s} + x_\rho^{\alpha-m} \msum{i \in \cI}
        \mint_{x_\rho}^R x^{m}
        \gamma_i^\eta(x) N_i^{\eta,\delta}(x) \dd{x} \\
        &\leq D_0 \rho \, x_\rho^\alpha + x_\rho^{\alpha-m} \coloneqq
        B_\alpha.
      \end{aligned}
    \end{equation}
  \end{itemize}
  At the end, combining~\eqref{eq:moment_gammaN_1}
  and~\eqref{eq:moment_gammaN_2} brings
  \begin{equation*}
    \forall \alpha \geq 0, \; \exists B_\alpha :
    \quad \msum{i \in \cI} \mint_0^R x^\alpha
    \gamma_i^\eta(x) N_i^{\eta,\delta} (x) \dd{x} \leq B_\alpha,
    \quad \delta, \eta > 0.
  \end{equation*}
  
  Finally, we can control
  $\big( x^\alpha \sum_i\tau_i^\eta N_i^\eta \big)_{\delta}$, with
  $\alpha >-1$, in $L^1(0,R)$. Using, from the definition of
  $\gamma^\eta$ and~\ref{as:beta_in_inf}, the fact that there exists
  $\hat{x} >\eta$ such that:
  \begin{equation*}
    \tau_i^\eta(x) \leq x \, \gamma_i^\eta(x),
    \qquad {\nae} x \geq \hat{x},
    \quad \forall i \in \cI,
  \end{equation*}
  we have that for $R>\hat{x}$ (which is satisfied for $\delta$ small
  enough)
  \begin{equation*}
    \begin{aligned}
      \msum{i \in \cI} \mint_0^R x^\alpha \tau_i^\eta(x)
      N_i^{\eta,\delta} (x) \dd{x} &\leq \msum{i \in \cI}
      \int_0^{\hat{x}} x^\alpha \tau_i^\eta(x) N_i^{\eta,\delta}(x)
      \dd{x} + \msum{i \in \cI} \int_{\hat{x}}^R x^{\alpha+1}
      \gamma_i^\eta(x) N_i^{\eta,\delta}(x) \dd{x}\\
      &\leq D_0 \mfrac{\hat{x}^{\alpha+1}}{\alpha+1}+ B_{\alpha+1}
      \coloneqq C_\alpha,
    \end{aligned}
  \end{equation*}
  hence a $l^1 (\cI ; \, L^1)$-bound for
  $\big( x^\alpha \tau^\eta N^\eta \big)_{\eta,\delta}$ :
  \begin{equation}\label{eq:moment_tau_N}
    \forall \alpha > -1, \; \exists C_\alpha :
    \quad \msum{i \in \cI} \mint_0^R
    x^\alpha \tau_i^\eta(x) N_i^{\eta,\delta}(x) \dd{x}
    \leq C_\alpha, \quad \delta, \eta > 0.
  \end{equation}
  
  To conclude to a $l^1(\cI;\,W^{1,1})$ bound we need an estimate on
  the derivative. Relying on equation~\ref{pb:GFv_trunc} again
  and~\eqref{eq:SV_xalpha_int_SR}, we get: for all $\alpha > 0$,
  \begin{equation*}
    \begin{aligned}
      \msum{i \in \cI} &\mint_0^R \Big\lvert \mpdvx \Big( x^\alpha
      \tau_i^\eta(x) N_i^{\eta,\delta} (x) \Big) \Big\rvert \dd{x}
      \leq \alpha \msum{i \in \cI} \mint_0^R \!  \Big( x^{\alpha-1}
      \tau_i^\eta(x) N_i^{\eta,\delta}(x) + x^{\alpha} \Big\lvert
      \mpdvx \big( \tau_i^\eta N_i^{\eta,\delta}\big)(x)
      \Big\rvert \Big) \dd{x}\\
      &\leq \alpha \bigg( C_{\alpha-1} + \msum{i \in \cI} \mint_0^R
      x^\alpha \big( \lambda^{\eta,\delta}+ \gamma_i^\eta(x) \big)
      N^{\eta,\delta}_i(x) \dd{x} + \mfrac{1}{2^{\alpha-1}} \msum{i
        \in \cI} \mint_0^Rx^\alpha
      \gamma_i^\eta(x) N_i^{\eta,\delta}(x) \dd{x} \bigg)\\
      &\leq \alpha \bigg( C_{\alpha-1} + \msum{i \in \cI} \mint_0^R
      x^{\alpha-1} \tau_i^\eta(x) N_i^{\eta,\delta} (x) \dd{x} +
      \mfrac{1}{2^{\alpha}} \msum{i \in \cI} \mint_0^Rx^\alpha
      \gamma_i^\eta(x) N_i^{\eta,\delta}(x) \dd{x} \bigg)\\
      & \leq
      \alpha \big( 2 C_{\alpha-1} + 2^{-\alpha}B_\alpha \big),
    \end{aligned}
  \end{equation*}
  and a similar control holds when $\alpha$ is zero according the
  direct equation of~\ref{pb:GFv_trunc}, so that at the end, together
  with~\eqref{eq:moment_tau_N}, we have:
  \begin{equation}\label{eq:moment_tau_N_W11}
    \forall \alpha \geq 0,
    \quad  \big( x^\alpha \tau^\eta N^{\eta,\delta} \big)_{\eta,\delta>0}
    \text{~bounded in~} \ell^1 \big( \cI; \, W^{1,1}(\R_+) \big).
  \end{equation}

  We deduce that
  $( x^\alpha \tau^\eta N^{\eta, \delta} )_{\delta >0}$, for every
  $\alpha \geq 0$, belongs to a compact set of
  $ \ell^1 \big( \cI; \, L^1(\R_+) \big)$. Indeed by a diagonal
  argument we can extract from it (after M successive extractions) a
  subsequence (denoted identically) such that for every $i \in \cI$
  and all positive~$X$
  \smash{$(x^\alpha \tau_i^\eta N_i^{\eta, \delta})_{\delta >0}$}
  converges strongly in $L^1([0, X])$. Denote by
  $H^{\eta} \in \ell^1 \big( \cI; \, L^1(\R_+) \big)$ the limit when
  $\alpha =0$, then
  \begin{equation*}
    \begin{aligned}
      \msum{i \in \cI} \int_0^{\infty} \big| x^\alpha \tau_i^\eta
      N_i^{\eta,\delta} - x^\alpha H_i^{\eta} \big| &\leq \msum{i \in
        \cI} \Big( \int_0^X x^\alpha \big| \tau_i^\eta
      N_i^{\eta,\delta} -H_i^{\eta} \big| + \int_X^{\infty} x^\alpha
      \tau_i^\eta N_i^{\eta,\delta} +
      \int_X^{\infty} x^\alpha  H_i^{\eta} \Big)\\
      &\leq \bigNorm{ x^\alpha \big( \tau^\eta N^{\eta,\delta}
        -H_i^{\eta} \big) }{\ell^1 \lp \cI; \, L^1([0, X]) \rp} +
      \mfrac{2}{X} C_{\alpha + 1},
    \end{aligned}
  \end{equation*}
  where $\int_X^\infty x^\alpha H^\eta$ has be bounded by
  $ \frac{1}{X} C_{\alpha + 1}$ thanks to Fatous's lemma. Thus, for
  any $\ep >0$ we can find $X$ big enough such that the last
  term of the right-hand side is less than
  $\frac{\ep}{2}$. The first term will be as well for all
  small $\delta$ since convergence is strong in
  $\ell^1 \big( \cI; \, L^1([0, X]) \big)$ and we conclude to the
  strong $\ell^1 \big( \cI; \, L^1(\R_+) \big)$-convergence.

  We have now everything gathered to pass to the limit, first in
  $\delta \rightarrow 0$ and $\eta >0$ fixed to get rid of the
  positive boundary condition.  The same argument holds for
  \smash{$\big((1+x^\alpha) \tau^\eta N^{\eta,\delta}\big)_{\delta
      >0}$}: we can extract a subsequence that converges
  component-wise in $L^1(\R_+)$ to some $H^{\eta,\alpha}$.  However,
  for some $\omega \geq 0$ and all~$i \in \cI$,
  $x \mapsto (1+x^\omega)\tau_i^\eta(x)$ is bounded from below by a
  positive constant (from $\tau^\eta_i$ being equal to $\eta>0$ on
  $[0,\eta]$ and~\ref{as:tau_positive} and~\ref{as:tau_in_inf}) and
  therefore we deduce
  \begin{equation*}
    N_i^{\eta,\delta}  \overset{L^1}{\underset{\delta \rightarrow
        0}{\longrightarrow}} N_i^\eta
    \coloneqq \frac{H^{\eta,\omega}_i}{(1+x^{\omega})
      \tau_i^\eta}, \qquad \forall i \in \cI.
  \end{equation*}
  Passing to the limit (in the weak sense) in the
  equation~\ref{pb:GFv_trunc} on~$N^{\eta,\delta}$ we find:
  $\forall i \in \cI$, $x \in (0,+\infty)$,
  \begin{equation*}\label{pb:GFv_trunc_limdelta}
    \tag*{\textnormal{(GF$^{\eta}_{\! v}$)}}
    \left\{~
      \begin{aligned}
        &\mpdvx \big[ \tau_i^\eta (x) N_i^\eta (x) \big] + \big(
        \lambda^{\eta} + \gamma_i^\eta (x) \big) N_i^{\eta}(x) = 4
        \msum{j \in \cI} \Big( \gamma_j^\eta(2x)
        N_j^{\eta}(2x) \kappa_{ji} \Big),\\
        & N_i^{\eta}(0) = 0, \qquad \msum{j \in \cI} \Big( \mint_0^R
        \!  N_j^{\eta}(s) \dd{s} \Big) = 1, \qquad N_i^{\eta}(x) \geq
          0.
      \end{aligned}
    \right.
  \end{equation*}

  \textbf{\textit{Limit as $\eta \rightarrow 0$ for $N^\eta$.}}  All
  estimates~\eqref{eq:SV_xalpha_int_SR}-\eqref{eq:moment_tau_N_W11}
  remain true for delta $\delta = 0$. If they ensure that
  \smash{$( x^\alpha \tau_i^\eta N_i^{\eta} )_{\eta >0}$}, for all
  $i \in \cI$, belongs to a compact set of $L^1(\R_+)$ (same arguments
  than for
  \smash{$( x^\alpha \tau_i^\eta N_i^{\eta, \delta} )_{\delta >0}$})
  not necessarily $(N_i^{\eta})_{\eta >0}$ anymore, since the limit
  $\tau_i$ of $(\tau_i^\eta)_{\eta>0}$ can vanish at zero.
  Let us focus on proving the $L^1$-weak convergence of
  $(N_i^{\eta})_{\eta >0}$ first, which is equivalent by {the}
  Dunford-Pettis theorem to prove that $(N_i^\eta)_{\eta>0}$ is
  equi-integrable, bounded in $L^1(\R_+)$.
  
  \begin{itemize}[leftmargin=.5cm, parsep=0cm, itemsep=0.1cm,
    topsep=0cm]
  \item \textit{Around x=0.} We establish $L^\infty_0$-bounds for
    $( \sum_i x^{\nu_0} \tau_i^\eta N_i^{\eta} )_{\eta}$, and thus
    $( x^{\nu_0} \tau_i^\eta N_i^{\eta} )_{\eta}$,
    with $\nu_0 \geq 0$ defined by~\ref{as:tau_in_0}.
    Integrating~\ref{pb:GFv_trunc_limdelta} on $(0, x')$, for
    $x' \leq x \in \R_+$, and summing on $i \in \cI$ yields:
    \begin{equation*}
      \msum{i \in \cI} \tau_i^\eta(x') N_i^\eta(x')
      \leq 2 \msum{i \in \cI} \mint_0^{2x'}
      \gamma_i^\eta(s) N_i^\eta(s) \dd{s} \leq 2
      \msum{i \in \cI} \mint_0^{2x}
      \gamma_i^\eta(s) N_i^\eta(s) \dd{s} \leq 2 B_0.
    \end{equation*}
    We introduce
    $f^\eta \colon x \mapsto \Norm{\tau^\eta N^\eta}{\ell^1(\cI;
      L^\infty(0,x))}$. For $x \in \big(0, \frac{x_\rho}{2} \big)$
    ($x_\rho$ defined by~\eqref{eq:def_rho}), we find
    \begin{equation*}
      \begin{aligned}
        f^\eta(x)
        &\leq 2 \msum{i \in \cI} \Big( \mint_0^x \beta(s)
          \tau_i^\eta(s) N_i^\eta(s) \dd{s} + (2x)^{\nu_0} \mint_x^{2x}
          s^{-\nu_0} \beta(s) \tau_i^\eta(s)
          N_i^\eta(s) \dd{s} \Big)\\
        &\leq 2 \rho f^\eta(x) + 2^{\nu_0+1} x^{\nu_0} \mint_x^{x_\rho}
          s^{-\nu_0} \beta(s) f^\eta(s) \dd{s}.
      \end{aligned}
    \end{equation*} 
    Therefore, for $x$ in $\big(0, \frac{x_\rho}{2} \big)$ and for
    $c_0 \coloneqq \mfrac{2^{\nu_0+1}}{1-2\rho} >0$,
    \begin{equation*}
      x^{-\nu_0} f^\eta(x) \leq c_0
      \mint_x^{x_\rho} \beta(s)
      s^{-\nu_0} f^\eta(s) \dd{s} \coloneqq c_0 F^\eta(x),
    \end{equation*}
    and applying Gr\"{o}nwall's lemma to
    $x \mapsto x^{-\nu_0}f^\eta(x)$ finally brings: for
    $ x \in \big(0, \frac{x_\rho}{2} \big)$,
    \begin{equation*}
      x^{-\nu_0} \msum{i \in \cI} \tau_i^\eta(x) N_i^\eta(x)
      \leq c_0 F^\eta \big( \mfrac{x_{\rho}}{2} \big)
      \e^{ c_0  \scaleobj{0.65}{
          \mint_{\scaleobj{1.4}{x}}^{\scaleobj{1.2}{\frac{x_{\texp{\rho}}}{2}}}}
        \!  \beta(s) \dd{s}}
      \leq c_0 \mfrac{2^{\nu_0+1} B_0 \rho}{x_\rho^{\nu_0}}
      \e^{c_0 \rho} \coloneqq C.
    \end{equation*}
    Therefore, noticing that for all $i \in \cI$, and for $x > 0$,
    \begin{equation*}
      \mfrac{x^{\nu_0}}{\tau^\eta_i(x)}
      \leq \max \Bigl( 1, \mfrac{x^{\nu_0}}{\tau_i(x)} \Bigr)
      \coloneqq  f_i(x),
    \end{equation*}
    we find that for all $i$ in $\cI$, $N_i^\eta$ is controlled on
    $(0,\frac{x_\rho}{2})$ by $Cf_i$ that is $L^1_0$ and independent
    of $\eta$. We conclude that $(N_i^\eta)_{\eta >0}$, $i \in \cI$,
    is equi-integrable equi-bounded around size $0$.
    
    \emph{Note that from the bound on $x^{-\nu_0} f^\eta$, a bound on
      $x^{-2\nu_0} f^\eta$ can similarly be obtained, and so on for
      any $x^{-n\nu_0} f^\eta$, $n \in \N$. At the end, what we
      actually have is a uniform $L^\infty_0$-bound on
      $x^\alpha \tau_i^\eta N_i^\eta$ for all $\alpha \in \R$. The
      continuous embedding
      \smash{$W^{1,1}(\R_+) \hookrightarrow L^\infty(\R_+)$} besides
      ensures that for all $\alpha \in \R$,
      $x^\alpha \tau_i^\eta N_i^\eta$ is uniformly bounded in
      $L^\infty(\R_+)$.}

  \item \textit{Elsewhere, on intervals $[\ep, +\infty)$.} The fact
    that \smash{$( x^\alpha \tau_i^\eta N_i^{\eta} )_{\eta >0}$}, for
    all $i \in \cI$, belongs to a compact set of $L^1(\R_+)$ combined
    to \ref{as:tau_positive} and~\ref{as:tau_in_inf}, ensures that up
    to extraction $(N_i^\eta)_{\eta>0}$ converges strongly in
    $L^1([\ep, + \infty))$ for any $\ep > 0$.  In particular the
    assumptions of the Dunford-Pettis theorem are verified.
  \end{itemize}

  By Dunford-Pettis theorem we deduce that $(N^\eta)_{\eta >0}$
  converges (after $M$ successive extractions) to some $N$
  component-wise weakly in $L^1(\R_+)$.
  However for all $i \in \cI$, $(N_i^\eta)_{\eta >0}$ converges
  strongly on any interval $[\ep,+\infty)$, $\ep > 0$, thus we have:
  \begin{equation*}
    \begin{aligned}
      \mint_0^{+\infty} \abs{N_i^\eta(x)-N_i(x)} \dd{x} &=
      \mint_0^\ep \abs{N_i^\eta(x)-N_i(x)} \dd{x} +
      \mint_\ep^{+\infty} \abs{N_i^\eta(x)-N_i(x)} \dd{x}\\
      &\leq 2 {C} \mint_0^\ep f_i(x) \dd{x} +
      \mint_\ep^{+\infty} \abs{N_i^\eta(x)-N_i(x)} \dd{x}\\
      & \underset{\eta \rightarrow 0}{\longrightarrow} 2 {C}
      \mint_0^\ep f_i(x) \dd{x},
    \end{aligned}
  \end{equation*}
  with the right-hand side arbitrarily small for $\ep, \eta$
  small since $f_i$ belongs to $L^1_0$. We conclude that for all $i$
  in $\cI$, $(N_i^\eta)_{\eta>0}$, converges to $N_i$ strongly in
  $L^1(\R_+)$, and then, passing to the limit in the weak formulation
  of~\ref{pb:GFv_trunc_limdelta}, that $N$ satisfies the direct problem
  of~\ref{pb:GFv}.
  
  \textbf{Step \ref{enum:proof_eig6}. \textit{Limit as
      $\eta,\delta \rightarrow 0$ for $\phi^{\eta,\delta}$}.} We want
  to derive for some positive $k$ and all $i \in \cI$, a
  \smash{$L^\infty( \R_+, \frac{\dd{x}}{1+x^k} )$}-bound for
  \smash{$(\phi_i^{\eta,\delta})_{\eta,\delta>0}$}. Let fix $\eta$,
  $\delta$ positive such as to
  guarantee~\eqref{eq:cond_parametres_trunc}. We start by controlling
  \smash{$(\phi_i^{\eta,\delta})_{\eta,\delta>0}$} on any
  interval~$[0,A]$, for any positive $A$:
  \begin{itemize}[leftmargin=.5cm, parsep=0cm, itemsep=0.1cm,
    topsep=0cm]
  \item \textit{Bound on $[0, x_\rho]$}, with $x_\rho$ defined
    by~\eqref{eq:def_rho}.  Consider equation~\ref{pb:GFv_trunc}
    on~$\phi^{\eta,\delta}_i$, divide by $\tau^\eta_i$ and integrate
    on $[x, x_\rho]$: for all $i \in \cI$, and for
    $x \in [0, x_\rho]$,
    \begin{equation*}
      \begin{aligned}
        \phi_i^{\eta,\delta}(x)
        &\leq \phi_i^{\eta,\delta}(x_\rho) +2
          \mint_x^{x_\rho} \beta(s) \Big( \msum{j \in \cI}
          \phi_j^{\eta,\delta} \big( \mfrac{s}{2} \big) \kappa_{ij}
          \Big) \dd{s} +
          \mfrac{\delta x_\rho}{\mu} \phi_i^{\eta,\delta}(0)\\
        &\leq \max_{j \in \cI} \bigl( \phi_j^{\eta,\delta}(x_\rho)
          \bigr) + \Big( 2 \rho + \mfrac{x_\rho}{2MR} \Big) \max_{j \in
          \cI} \Bigl( \bigNorm{\phi_j^{\eta,\delta}}{L^\infty(0,
          x_\rho)} \Bigr),
      \end{aligned}
    \end{equation*}
    and thus for $R= R(\eta, \delta)$ greater than
    $R_0 \coloneqq \frac{x_\rho}{2M(1-2\rho)}$, i.e. for $\delta$ and
    $\eta$ small enough we have: for all $i \in \cI$, for
    $x \in [0,x_\rho]$,
    \begin{equation*} \phi_i^{\eta,\delta}(x) \leq C(x_\rho)
      \big\lVert \phi^{\eta,\delta} (x_\rho)
      \big\rVert_{\ell^\infty(\cI)}, \qquad C(x_\rho) \coloneqq
      \Big(1- 2 \rho - \mfrac{x_\rho}{2MR_0} \Big)^{-1} .
    \end{equation*}
  \item \textit{Bound on $[x_\rho, A]$.} The map
    \smash{$G_i^{\eta, \delta} : x \mapsto \e^{-\eint{x_\rho}{x}
        \frac{\lambda^{\eta,\delta} + \gamma_i^\eta}{\tau_i^\eta}} \!
      \phi_i^{\eta,\delta}(x)$}, $i \in \cI$, decays on~$\R_+$ from
    \ref{pb:GFv_trunc}. So for any $A> x_\rho>\eta$, we can find
    $C(A) > 0$ (independent from $\eta, \delta$) s.t.
    \begin{equation*}
      \phi_i^{\eta,\delta}(x) \leq \e^{\eint{x_\rho}{x}
        \frac{\lambda^{\eta,\delta} + \gamma_i(s)}{\tau_i(s)} \dd{s}}
      \phi_i^{\eta,\delta}(x_\rho)
      \leq C(A) \big\lVert \phi^{\eta,\delta} (x_\rho)
      \big\rVert_{\ell^\infty(\cI)},
      \qquad \forall i \in \cI, \; x \in [x_\rho,A].
    \end{equation*}
  
  \item \textit{Uniform bound on $[0, A]$.} To conclude we need a
    uniform bound for $\phi^{\eta,\delta}(x_\rho)$. Using the decay of
    $G_i^{\eta, \delta}$ and the normalization condition on
    $\phi^{\eta, \delta}$ we get: $\forall i \in \cI$,
    $x \in [0, x_\rho]$,
    \begin{equation*}
      \begin{aligned}
        1 \geq \msum{i \in \cI} \Big( \mint_0^{x_\rho}
        \phi_i^{\eta,\delta}(x) N_i^{\eta,\delta}(x) \dd{x} \Big)
        &\geq \msum{i \in \cI} \Big( \phi_i^{\eta,\delta}(x_\rho)
        \mint_0^{x_\rho} \e^{-\eint{x}{x_\rho}
          \frac{\lambda^{\eta,\delta} +
            \gamma_i^\eta(s)}{\tau_i^\eta(s)} \dd{s}}
        N_i^{\eta,\delta}(x) \dd{x} \Big)\\
        &\geq \msum{i \in \cI} \Big( \phi_i^{\eta,\delta}(x_\rho)
        \e^{- \rho} \mint_{\frac{x_\rho}{2}}^{x_\rho}
        \e^{-\eint{x}{x_\rho}\frac{2
            \lambda}{\tau_i(s)} \dd{s}} N_i^{\eta,\delta}(x) \dd{x} \Big),\\
      \end{aligned}
    \end{equation*}
    where the integral term converges towards a positive quantity as
    $\delta, \eta$ go to zero since $x_\rho> b$ and $N_i$ is positive
    on $(\frac{b}{2}, +\infty)$ as proved in
    proposition~\eqref{prop:positivity}. Thus
    \smash{$\Norm{ \phi^{\eta,\delta} (x_\rho)}{\ell^\infty(\cI)}$} is
    bounded and we can conclude
    \begin{equation}\label{eq:Linfty_0_phi_etadelta}
      \forall A >0, \; \exists C_0(A) :
      \quad \bigNorm{\phi^{\eta,\delta}}{\ell^\infty (\cI; L^\infty(0,A))}
      \leq C_0(A) \quad \eta, \delta > 0. 
    \end{equation}
  \end{itemize}

  It remains to bound \smash{$\phi^{\eta, \delta}$} uniformly by
  $C(1+x^k)$ on $\cV \times [A, +\infty)$. One prove that the adjoint
  problem of~\ref{pb:GFv_trunc} satisfies a maximum principle of the
  form of~\cite[Lemma 4.]{doumic_jauffret_eigenelements_2010}, the
  proof follows the same steps. Therefore, building for some $A_0>0$ a
  supersolution $\bar{\phi}$ of any problem~\ref{pb:GFv_trunc},
  $\eta, \delta >0$, on $\cV \times [A_0, +\infty)$, greater than
  \smash{$\phi^{\eta,\delta}$} on \smash{$\cV \times [0, A_0]$} and
  positive at $x=R(\eta,\delta)$, yields
  \smash{$\phi^{\eta,\delta} \leq \bar{\phi}$} everywhere. We look for
  a supersolution of the form
  $\bar{\varphi}_i \colon x \mapsto x^k+\theta$, $i \in \cI$, with
  positive $k, \theta$ to be determined. It must satisfy on $[A_0, R]$
  \begin{equation}\label{eq:supersol}
    - \tau_i^\eta(x) \mpdvx \bar{\varphi}_i(x) + \big(
    \lambda^{\eta,\delta} \! + \! \gamma_i^\eta (x) \big)
    \bar{\varphi}_i(x) \geq 2 \gamma_i^\eta (x)
    \msum{j \in \cI} \Big( \bar{\varphi}_j
    \bigl(\tfrac{x}{2} \bigr) \kappa_{ij} \Big)
    +\delta \phi_i^{\eta,\delta}(0), \quad i \in \cI.
  \end{equation}
  Since \smash{$\phi_i^{\eta,\delta}(0) \leq C_0(1)$} for
  $\eta, \delta >0$ (see~\eqref{eq:Linfty_0_phi_etadelta}) it is
  enough to find $k$ and $A_0 \geq \eta$ such that
  \begin{equation*}
    -k \tau_i(x)x^{k-1}  + \big(
    \lambda^{\eta,\delta} \! + \! \gamma_i (x) \big)
    \big(x^k+\theta \big) \geq 2 \gamma_i (x)
    \Big( \theta + \mfrac{x^k}{2^k} \Big) 
    + \delta C_0(1), \quad i \in \cI,
  \end{equation*}
  holds on $[A_0, +\infty)$. Dividing by $x^{k-1} \tau_i(x)$ we find
  that if
  \begin{equation}\label{eq:condition_supersol}
    \Big( 1-\mfrac{1}{2^{k-1}} \Big)x \beta (x)
    \geq k +\frac{\theta \gamma_i(x)}{x^{k-1}\tau_i(x)}
    + \frac{\delta C_0(1)}{x^{k-1} \tau_i(x)}
  \end{equation}
  is satisfied on $[A_0, +\infty)$ then~\eqref{eq:supersol}
  holds. Since $x \mapsto x \beta(x)$ goes to infinity as $x$ does and
  both $\tau_i$, $\gamma_i$ belong to $\cP_\infty$
  (assumptions~\ref{as:beta_in_inf} and~\ref{as:tau_in_inf}), there
  exists $k>0$ such that for any $\theta>0$, there is a $A_0>0$ for
  which~\eqref{eq:condition_supersol} holds true on $[A_0,+\infty)$.
  We can apply the maximal principle to
  $\bar{\phi} \coloneqq \frac{A_0}{\theta} \bar{\varphi}$ (that
  satisfies $\bar{\phi} \geq \phi_i^{\eta,\delta}$ on
  $\cV \times [0,A_0]$,~\eqref{eq:supersol} on $\cV \times [A_0, R]$
  and $\bar{\phi}_i(R) >0$, uniformly in $\eta, \delta$) to finally
  conclude:
  \begin{equation*}
    \exists k, C, \theta > 0 : \; \forall \eta, \delta >0
    \text{~small},
    \quad \phi_i^{\eta,\delta}(x)  \leq Cx^k + \theta
    \quad \forall i \in \cI, \;x \in [0, +\infty).
  \end{equation*}
  
  We obtained that $\phi_i^{\eta,\delta}$, for all $i \in \cI$, is
  uniformly bounded in $L^{\infty}_{loc}(\R_+)$, therefore
  $ \tau_i^\eta \p_x \phi_i^{\eta,\delta}$ is uniformly bounded in
  $L^{\infty}_{loc}(\R_+^*)$ (from~\ref{pb:GFv_trunc} and
  $ \gamma \in L^\infty_{loc}(\R_+^*)$ and so is
  \smash{$(\p_x \phi_i^{\eta,\delta})_{\delta, \eta}$} thanks
  to~\ref{as:tau_positive}. We can thus extract (again after M
  successive diagonal extractions) a subsequence still denoted
  \smash{$(\phi^{\eta,\delta})_{\eta,\delta>0}$} converging in
  \smash{$\sC^0(\R_+^*)^{\texp{M}}$} towards some $\phi$ and such that
  for every $i \in \cI$,
  \smash{$(\mpdvx \phi_i^{\eta,\delta})_{\eta,\delta}$} converges
  \smash{$L^\infty_{loc}(\R_+^*)$-weakly*} towards
  \smash{$\mpdvx \phi_i \in L^\infty_{loc}(\R_+^*)$} (and
  \smash{$\frac{\phi_i^{\eta,\delta}}{1 + x^k}$} towards
  \smash{$\frac{\phi_i}{1 + x^k}$} strongly in $L^1(\R_+)$).

  It remains to check that $\phi$ satisfies the adjoint equation
  of~\ref{pb:GFv} in~$L^1_{loc}(\R_+^*)$. It is clear
  from~\ref{as:gamma} that all the terms in~$\phi^{\eta,\delta}$
  converge to the expected limit in $L^1_{loc}(\R_+^*)$. As for the
  derivative term, we can use the weak* convergence to derive: for all
  $\varphi \in \cC^\infty_c( \R_+^* )$, with $\supp(\varphi) =K$ and
  $\eta \leq \min(K)$:
  \begin{equation*}
    \mint_K \abs{ \tau^\eta \mpdvx \phi_i^{\eta,\delta}
      - \tau \mpdvx \phi_i } \varphi
    \leq \mint_K \big( \mpdvx \phi_i^{\eta,\delta}
    - \mpdvx \phi_i \big) \tilde{\varphi}
    \underset{\eta, \delta \rightarrow 0}{\longrightarrow}  0,
  \end{equation*}
  for
  \smash{$\tilde{\varphi} = \tau \sgn \big(\mpdvx \phi_i^{\eta,\delta}
    - \mpdvx \phi_i \big) \varphi \in L^1(K)$}, so that the
  convergence holds in \smash{$L^1_{loc}(\R_+^*)$} as well.
  
  At the end, the normalization condition holds as a consequence of
  the $L^\infty - L^1$ convergence written as:
  \begin{equation*}
      1 = \msum{i \in \cI} \mint_0^\infty
      \mfrac{\phi_i^{\eta,\delta}}{1+x^k} (1+x^k) N_i^{\eta,\delta}
      \underset{\eta,\delta \rightarrow 0}{\longrightarrow} \, \msum{i
        \in \cI} \mint_0^\infty \mfrac{\phi_i}{1+x^k} (1+x^k) N_i =
      \msum{i \in \cI} \mint_0^\infty \phi_i N_i.
    \end{equation*}
\end{proof}

\subsection{Proof of Proposition~\ref{thm:uniqueness_eig}}

\begin{proof}
  \textit{Eigenvalue.} Consider
  $(\lambda_1, N^{\ttexp{(1)}}, \phi^{\ttexp{(1)}})$ and
  $(\lambda_2, N^{\ttexp{(2)}}, \phi^{\ttexp{(2)}})$ two solutions
  to~\ref{pb:GFv}. We have by duality that:
  \begin{equation*}
    \lambda_1 \big\la N^{\ttexp{(1)}} , \phi^{\ttexp{(2)}} \big\ra
    = \big\la -\tfrac{\dd}{\dd{x}} \big( \tau N^{\ttexp{(1)}} \big)
    + \cF N^{\ttexp{(1)}}, \phi^{\ttexp{(2)}} \big\ra
    = \big\la N^{\ttexp{(1)}}, \tau \tfrac{\dd}{\dd{x}}
    \phi^{\ttexp{(2)}} + \cF^* \phi^{\ttexp{(2)}} \big\ra
    = \lambda_2 \big\la N^{\ttexp{(1)}} , \phi^{\ttexp{(2)}} \big\ra,
  \end{equation*}
  with $\la N^{\ttexp{(1)}}, \phi^{\ttexp{(2)}} \ra$ positive thanks to
  \Cref{prop:positivity}, so that
  $\lambda_1 = \lambda_2 \coloneqq \lambda$.

  \textit{Direct eigenvector.} Now, we find that the entropy of
  $n \coloneqq N^{\ttexp{(1)}} \e^{\lambda t}$ with respect to
  $p \coloneqq N^{\ttexp{(2)}} \e^{\lambda t}$, written for
  $\psi \coloneqq \phi^{\ttexp{(1)}} \e^{- \lambda t}$ and some
  strictly convex $H$ is independent on time so that for all $t>0$,
  $D^H[n|p](t) = D^H[N^{\ttexp{(1)}} | N^{\ttexp{(2)}}] = 0$, and we
  deduce with \Cref{lemma:GRE_minimizer} that
  $N^{\ttexp{(1)}} = C N^{\ttexp{(2)}}$, for some $C >0$.  Thanks to
  the normalizing condition on $N^{\ttexp{(1)}}$ and $N^{\ttexp{(2)}}$
  we conclude to $C = 1$.

  \textit{Adjoint eigenvector.} Let $x_0$ be any positive real number
  and define
  $\phi \coloneqq C \big( \phi^{\ttexp{(2)}} - \phi^{\ttexp{(1)}}
  \big)$ with
  $C \coloneqq - \sgn \! \big( \phi^{\ttexp{(2)}}(x_0) -
  \phi^{\ttexp{(1)}}(x_0) \big)$. By linearity, $\phi$ satisfies the
  adjoint equation of~\ref{pb:GFv}. Thus for every $i \in \cI$,
  the map
  \begin{equation*}
    x \mapsto \phi_i(x) \e^{- \Lambda_i(x)}, \qquad
    \Lambda_i(x) \coloneqq \mint_{x_0}^x \mfrac{\lambda +
      \gamma_i(s)}{\tau_i(s)} \dd{s}
  \end{equation*}
  is decreasing on $[x_0, +\infty)$ (see computations of the proof
  of~\Cref{prop:positivity}) which brings
  \begin{equation*}
    \phi_{i}(x) \leq \phi_{i}(x_0) \e^{ \Lambda_i(x)}
    = - \abs{\phi_{i}(x_0)} \e^{ \Lambda_i(x)} \leq 0,
    \qquad \forall i \in \cI, \; {\forall} x \in [x_0, +\infty].
  \end{equation*}
  Besides, from the normalization condition satisfied by
  $\phi^{\ttexp{(1)}}$ and $\phi^{\ttexp{(2)}}$ we have that
  $\sum_i \int N_i \phi_i$ is zero and thus:
  \begin{equation*}
    \msum{i \in \cI } \mint_0^\infty N \abs{ \phi }
    = \msum{i \in \cI } \Big( \mint_0^{x_0} N \abs{ \phi }
    - \mint_{x_0}^\infty N \phi \Big) \underset{x_0 \rightarrow
      0}{\longrightarrow} 0,
  \end{equation*}
  which allows to conclude that $\phi$ is zero almost everywhere on
  $\supp N$, i.e.  $\phi^{\ttexp{(2)}} \equiv \phi^{\ttexp{(1)}}$ on
  $\cV \times \big[\tfrac{b}{2}, +\infty \big)$. If $b>0$,
  $\beta \equiv 0$ on $[0, b]$ thus for $j = 1, 2$:
  \begin{equation*}
    \phi_i^{\ttexp{(j)}}(x) = \phi_i^{\ttexp{(j)}} \big( \tfrac{b}{2}
    \big) \e^{- \eint{x}{b/2}
      \frac{\lambda}{\tau_i(s)} \dd{s}}, \qquad \forall i \in \cI, \;
    {\forall} x \in  \big[ 0, \tfrac{b}{2} \big],
  \end{equation*}
  and we have $\phi^{\ttexp{(2)}} \equiv \phi^{\ttexp{(1)}}$ as well
  on $\cV \times \big[ 0, \tfrac{b}{2} \big]$.
\end{proof}

\appendix

\phantomsection
\addcontentsline{toc}{section}{Appendix}
\section*{Appendix}

\renewcommand\thesubsection{\Alph{subsection}}

\subsection{Proof of Theorem~\ref{thm:existence_trunc_reg} --
  Kre\u{\i}n-Rutman}
\label{ssec:proof_existence}

The proof relies on {the} Kre\u{\i}n-Rutman theorem which generalizes
Peron-Frobenius theorem for matrices to the infinite dimension. We
refer to~\cite[Section 6.6]{perthame_transport_2007} for a similar
application,~\cite{krein_linear_1948} for the original paper and
\cite[Chapter VIII, Appendix]{dautray_mathematical_1999} for a more
recent proof and applications.

\begin{proof}
  Let $\eta$, $\delta$ and $\ep$ be fixed positive real numbers and
  omit from notations the truncation parameters $\eta$ and
  $\delta$. We work on the Banach space
  \begin{equation*}
    \big(X_{\! R}, \, \lN \cdot \rN_{X_R} \big) \coloneqq
    \big(\sC ([0,R])^{\texp{M}}, \, \lN \cdot \rN_{\ell^\infty} \big)
  \end{equation*}
  whose positive cone
  $ \big\{ f \in X_{\! R} \mid f_i \geq 0, \; \; \forall i \in \{ 1,
  \ldots, M \} \big\} $ has non-empty interior. We recall that
  $\lN \cdot \rN_{\ell^\infty}$ is defined
  by~\eqref{eq:definition_norms}.

  \textbf{Direct problem.} In the following we consider equivalently
  the quantity $u^\ep=\tau^\ep N^\ep$ to hide the growth rate in the
  transport term. The objective is to apply {the} Kre\u{\i}n-Rutman
  theorem to the linear operator $\cG_\ep \colon f \mapsto u$ defined
  on $X_{\! R}$, for $\alpha >0$, as solution to
  \begin{equation}\label{eq:SV_KR}
    \left\{~
      \begin{aligned}
        &\mpdvx u_i (x) + \mfrac{\alpha + \gamma_i^\ep
          (x)}{\tau_i^\ep(x)} u_i(x) = 4 \msum{j \in \cI} \lp
        \mfrac{\gamma_j^\ep(2x)}{\tau_j^\ep(2x)}
        u_j(2x) \kappa_{ji} \rp \mathds{1}^\ep
        _{\tCinterval{0}{R}}(2x)
        +\mfrac{f_i(x)}{\tau_i^\ep(x)},\\
        & u_i(0) = \delta \msum{j \in \cI} \lp \mint_0^R \mfrac{u_j
          (x)}{\tau_j^\ep(x)} \dd{x} \rp,
      \end{aligned}
    \right.
  \end{equation}
  with semi-explicit expression:
  \begin{multline}\label{eq:proof_u_semi_explicit}
      u_i(x) = u_i(0) \e^{-\eint{0}{x} \frac{\alpha +
          \gamma_i^\ep(y)}{\tau_i^\ep(y)} \dd{y}} + \,
      2 \int_0^{2x} \e^{-\eint{\frac{s}{2}}{x} \frac{\alpha +
          \gamma_i^\ep(y)}{\tau_i^\ep(y)} \dd{y}}
      \beta^\ep(s) \msum{j \in \cI} \big( u_j(s) \kappa_{ji}
      \big) \mathds{1}^\ep
      _{\tCinterval{0}{R}}(s) \dd{s}\\
      + \int_0^{x} \e^{-\eint{s}{x} \frac{\alpha +
          \gamma_i^\ep(y)}{\tau_i^\ep(y)} \dd{y}}
      \mfrac{f_i(s)}{\tau_i^\ep(s)} \dd{s}, \qquad \forall i
      \in \cI, \; \forall x \in [0,R].
  \end{multline}
  Let us verify that $\cG_\ep$ fulfills the assumptions of the
  theorem.
    
  \begin{enumerate}[label=\textit{\roman*)}, leftmargin=*, wide=0pt]
  \item\textit{$\cG_\ep$ is well defined from $X_{\! R}$ into itself.}
    We aim at proving that for any $f$ in $X_{\! R}$,~\eqref{eq:SV_KR}
    admits a solution in $X_{\!  R}$. Relying on {the} Banach-Picard
    fix-point theorem (that also provides uniqueness), we define for
    $f$ in $X_{\! R}$, the operator $T_f$ such that for $w$
    in~$X_{\! R}$, $u=T_f(w)$ is the (explicit) solution to:
    $\forall i \in \cI$, $\forall x \in (0,R)$,
    \begin{equation*}
      \left\{~
        \begin{aligned}
          &\mpdvx u_i (x) + \mfrac{\alpha + \gamma_i^\ep
            (x)}{\tau_i^\ep(x)} u_i(x) = 4
          \beta^\ep(2x) \msum{j \in \cI} \big( w_j(2x)
          \kappa_{ji} \big) \mathds{1}^\ep
          _{\tCinterval{0}{R}}(2x)
          +\mfrac{f_i(x)}{\tau_i^\ep(x)}, \\
          & u_i(0) = \delta \msum{j \in \cI} \lp \mint_0^R
          \mfrac{w_j (s)}{\tau_j^\ep(s)} \dd{s} \rp,
        \end{aligned}
      \right.
    \end{equation*}
    and prove that $T_f$ is a contraction from $X_{\! R}$ to
    $X_{\! R}$.
    
    Let $w^1$, $w^2$ be two elements of $X_{\! R}$. We define
    $u^i=T_f(w^i)$, $i \in \lb 1,2 \rb$, and $w=w^2-w^1$,
    $u=u^2-u^1$. Then $u$ satisfies: $\forall i \in \cI$,
    $\forall x \in (0,R)$,
    \begin{equation*}
      \left\{~
        \begin{aligned}
          &\mpdvx u_i (x) + \mfrac{\alpha + \gamma_i^\ep
            (x)}{\tau_i^\ep(x)} u_i(x) = 4
          \beta^\ep(2x) \msum{j \in \cI} \big( w_j(2x)
          \kappa_{ji} \big) \mathds{1}^\ep
          _{\tCinterval{0}{R}}(2x), \\
          & u_i(0) = \delta \msum{j \in \cI} \lp \mint_0^R \mfrac{w_j
            (s)}{\tau_j^\ep(s)} \dd{s} \rp,
        \end{aligned}
      \right.
    \end{equation*}
  whose explicit solution is given, for all $i$ in $\cI$,
  all $x$ in $[0,R]$, by:
  \begin{equation*}
    u_i(x) = u_i(0) \e^{-\eint{0}{x} \frac{\alpha +
        \gamma_i^\ep (y)}{\tau_i^\ep(y)} \dd{y}} + 4
    \int_0^x \e^{-\eint{s}{x} \frac{\alpha + \gamma_i^\ep
        (y)}{\tau_i^\ep(y)} \dd{y}} 
    \beta^\ep(2s) \msum{j \in \cI} \big( w_j(2s) \kappa_{ji} \big)
    \mathds{1}^\ep_{\tCinterval{0}{R}}(2s) \dd{s}.
  \end{equation*}    
  A change of variables and rough estimates,
  using~\eqref{eq:prop_convolution_ineq} to bound
  $\tau^\ep \!$, bring:
  $\forall i \in \cI, \forall x \in [0,R]$
  \begin{equation}\label{eq:in_pt_fixe_contraction}
    \begin{aligned}
      \abs{u_i(x)} &\leq \delta \msum{j \in \cI} \lp \mint_0^R
      \mfrac{\abs{w_j (s)}}{\tau_j^\ep(s)} \dd{s} \rp + 4
      \int_0^{x} \e^{-\eint{s}{x}
        \frac{\alpha+\gamma_i^\ep(y)}{\tau_i^\ep(y)}
        \dd{y}} \beta^\ep(2s) \msum{j \in \cI} \big(
      \abs{w_j(2s)} \kappa_{ji} \big)
      \mathds{1}^\ep_{\tCinterval{0}{R}}(2s) \dd{s}\\
      &\leq \bigg( \underbrace{ \mfrac{\delta M R}{\mu} + 2 \max_{i
          \in \cI} \Bigl( \msum{j \in \cI} \kappa_{ji} \Bigr)
        \int_0^{2x} \e^{-\frac{\alpha
            (x-\frac{s}{2})}{\tNorm{\tau^\eta}{L^{\! \infty}}}}
        \beta^\ep(s) \mathds{1}^\ep_{\tCinterval{0}{R}}(s)
        \dd{s}}_{ \coloneqq k_\ep(x) } \bigg) \XRnorm{w}.
    \end{aligned}
  \end{equation}
  We need $k_\ep(x)$ to be strictly less than $1$ uniformly in
  $x$. However, for later purpose we prove a bound uniform in
  $\ep$ as well, which requires a little finer work.

  First, let us impose ${\delta M R}<{\mu}$, say
  $2{\delta M R} \leq {\mu}$. Then, let us consider some small
  $\tilde{\ep} > 0$ to be fixed, and focus on the integral
  part of $k_\ep$: first for $x$ in
  \smash{$\big[0, \frac{\tilde{\ep}}{2} \big]$}
  \begin{equation*}
    \begin{aligned}
      I_\ep(x) \coloneqq \int_0^{2x} \e^{-\frac{\alpha
          (x-\frac{s}{2})}{\tNorm{\tau^\eta}{L^{\! \infty}}}}
      \beta^\ep(s)
      \mathds{1}^\ep_{\tCinterval{0}{R}}(s) \dd{s} &\leq
      \int_0^{\tilde{\ep}} \beta^\ep(s) \dd{s} = \lN
      \beta^\ep \rN_{L^1(0, \tilde{\ep})} \leq 2 \lN
      \beta \rN_{L^1(0, \tilde{\ep})}
    \end{aligned}
  \end{equation*}
  (using that $\beta^\ep$ converges to $\beta$ in
  $L^1(0, R)$), and then for $x$ in
  \smash{$\big[ \frac{\tilde{\ep}}{2},R \big]$}
  \begin{equation*}
    \begin{aligned}
      I_\ep(x) & \leq \int_0^{\min(2x,R)-\tilde{\ep}}
      \e^{-\frac{\alpha (x-\frac{s}{2})}{\tNorm{\tau^\eta}{L^{\!
              \infty}}}} \beta^\ep(s) \dd{s} + \!
      \int_{\min(2x, R) -\tilde{\ep}}^{\min(2x, R)}
      \beta^\ep(s) \dd{s}\\
      &\leq \e^{-\frac{\alpha}{\tNorm{\tau^\eta}{L^{\! \infty}}}
        \big(x-\frac{\min(2x,R)-\tilde{\ep}}{2} \big)}
      \Norm{\beta^\ep}{L^1(0,R)} +
      \Norm{\beta^\ep}{L^1(\min(2x,R)-\tilde{\ep}, \,
        \min(2x, R) )},
    \end{aligned}
  \end{equation*}
  which brings, after distinguishing whether $x$ is smaller or larger
  than \smash{$\ttfrac{R}{2}$} and using
  properties~\eqref{eq:prop_convolution_ineq} of the convolution
  product, that for any $\ep$ small enough and $x$ in
  \smash{$\big[ \frac{\tilde{\ep}}{2}, R \big]$}:
  \begin{equation*}
    I_\ep(x) \leq 2 \Big(
    \e^{-\frac{\alpha}{\tNorm{\tau^\eta}{L^{\! \infty}}}
      \tilde{\ep}} \Norm{\beta}{L^1(0, R)} + \sup_{s \in
      [\tilde{\ep}, R]} \! \big(
    \Norm{\beta}{L^1(s-\tilde{\ep}, \, s)} \big) \Big)
    \coloneqq I(\alpha, \tilde{\ep}).
  \end{equation*}
  Therefore, $\beta$ being $L^1_{loc}\big( [0, +\infty) \big)$ (see
  Assumption~\ref{as:beta_L0}), we can fix
  $\tilde{\ep} = \tilde{\ep}_{_0}$ such that
  \begin{equation*}
    \sup_{s \in [0, R ]} \! \big(
    \Norm{\beta}{L^1(s, \, s+
      \tilde{\ep}_{\texp{0}})} \big) < \mfrac{1}{4}
    \bigg( 4 \max_{i \in \cI} \Bigl( \msum{j \in \cI}
    \kappa_{ji} \Bigr)  \bigg)^{-1}.
  \end{equation*}
  This way, for any small $\ep$, $k_\ep$ is bounded by $\frac{3}{4}$
  on $\big[0, \frac{\tilde{\ep}_{\texp{0}}}{2} \big]$ and to have
  $k_\ep$ strictly less than~$1$ on
  $[\frac{\tilde{\ep}_{\texp{0}}}{2}, R]$ as well, it only remains to
  fix $\alpha=\alpha_{\texp{0}}$ such that
  \begin{equation*}
    \e^{-\frac{\alpha_{\texp{0}}}{\tNorm{\tau^\eta}{L^{\! \infty}}}
      \tilde{\ep}_{\texp{0}}} \Norm{\beta}{L^1(0, R)}
    < \mfrac{1}{4} \bigg( 4 \max_{i \in \cI} \Bigl( \msum{j \in \cI}
    \kappa_{ji} \Bigr)  \bigg)^{-1}.
  \end{equation*}
  Going back to~\eqref{eq:in_pt_fixe_contraction}, we are now able to
  conclude:
  \begin{equation*}
    \lN u \rN_{\XRindex} \leq k \lN w \rN_{\XRindex}, \qquad
    k \coloneqq \mfrac{\delta M R}{\mu} 
    + 4 \max_{i \in \cI} \Bigl( \msum{j \in \cI}
    \kappa_{ji} \Bigr)  I (\alpha_{\texp{0}},
    \tilde{\ep}_{\texp{0}}) < 1,
  \end{equation*}  
  for ${2 \delta M R} \leq {\mu}$ and $I$, $\alpha_{\texp{0}}$,
  $\tilde{\ep}_{\texp{0}}$ independent of $\ep$,
  defined by the three previous equations.  As strict contraction,
  $T_f$ thus admits a unique fix-point that is precisely the solution
  to~\eqref{eq:SV_KR}.

  To ensure $k<1$ we impose from now one to the end of the proof that
  \begin{equation}\label{eq:condition_k_less_1}
    2 \delta M R \leq {\mu},
    \qquad \alpha_{\texp{0}} \leq \alpha.
  \end{equation}
  
\item\label{item:proof_th2_2} \textit{$\cG_\ep$ is continuous.} For
  any $i$ in $\cI$, $x$ in $[0,R]$, taking the absolute value
  in~\eqref{eq:proof_u_semi_explicit} we obtain
  inequality~\eqref{eq:in_pt_fixe_contraction} with at the right-hand
  side: terms in $u$ instead of $w$, similarly controlled by
  $k \lN u \rN_{\XRindex}$, and an additional term in $f$, handled as
  follows:
  \begin{equation*}
    \int_0^{x} \e^{-\eint{s}{x} \frac{\alpha +
        \gamma_i^\ep(y)}{\tau_i^\ep(y)} \dd{y}}
    \mfrac{\abs{f_i(s)}}{\tau_i^\ep(s)} \dd{s} \leq
    \lc \mfrac{1}{\alpha} \e^{-\eint{s}{x}
      \frac{\alpha}{\tau_i^\ep(y)} \dd{y}} \rc_0^x \lN f
    \rN_{\XRindex} \leq
    \frac{1}{\alpha} \lN f \rN_{\XRindex}.
  \end{equation*}
  This provides $\abs{u_i (x)}$ with a bound uniform in $i$ and $x$
  allowing to conclude to
  \begin{equation}\label{eq:fix_point_continuity}
    \lN u \rN_{\XRindex} \leq
    \frac{1}{\lp 1-k \rp \alpha} \lN f \rN_{\XRindex}.
  \end{equation}

\item\label{item:proof_th2_3} \textit{$\cG_\ep$ is strongly
      positive.}  Let $f$ be a non-negative function in $X_{\! R}$,
  we first want to show that $u=\cG_\ep(f)$ is non-negative as
  well. It is easy to check that for any non-negative~$w$, $T_f(w)$ is
  also non-negative. Therefore, recalling that $u$ is defined as the
  fix-point of $T_f$, we have $u$ non-negative. Combined
  with~\eqref{eq:proof_u_semi_explicit} we thus have
  \begin{equation*}
    u_i(x) \geq u_i(0) \e^{-\eint{0}{x} \frac{\alpha +
        \gamma_i^\ep(y)}{\tau_i^\ep(y)} \dd{y}}
    + \int_0^x \e^{-\eint{s}{x} \frac{\alpha + \gamma_i^\ep
        (y)}{\tau_i^\ep(y)} \dd{y}}
    \mfrac{\abs{f_i(s)}}{\tau_i^\ep(s)} \dd{s},
  \end{equation*}
  with
  \begin{equation*}
    u_i(0) = \delta \msum{j \in \cI} \lp
    \mint_0^R \mfrac{u_j (s)}{\tau_j^\ep(s)} \dd{s} \rp,
  \end{equation*}
  and we deduce that if additionally $f$ is not the null function,
  then $u$ is positive on $\cS_R$.

\item \textit{$\cG_\ep$ is compact.} We aim at showing that
  the image $A$ by $\cG_\ep$ of the unit ball $\cB_{X_{\! R}}$ of
  $X_{\! R}$ is relatively compact in $X_{\!
    R}$. From~\ref{item:proof_th2_2},
  inequality~\eqref{eq:fix_point_continuity}, we know that for any $f$
  in $\cB_{X_{\! R}}$, $u=\cG_\ep(f)$ is bounded in $X_{\! R}$,
  uniformly in $f$, thus so is $\mpdvx u$ from the equation:
  $\forall i \in \cI$
  \begin{equation}\label{eq:proof_reg_dx}
    \mpdvx u_i (x) = -\mfrac{\alpha + \gamma_i^\ep
      (x)}{\tau_i^\ep(x)} u_i(x)
    + 4 \beta^\ep(2x) \msum{j \in \cI} \big(
    u_j(2x) \kappa_{ji} \big) \mathds{1}^\ep
    _{\tCinterval{0}{R}}(2x)
    +\mfrac{f_i(x)}{\tau_i^\ep(x)}.
  \end{equation}
  Therefore, by Ascoli-Arzel\`{a} theorem,
  $ A_i \coloneqq \{u_i \mid u = \cG_\ep (f), \, f \in \cB_\cX \}$,
  $i \in \cI$, is relatively compact in $\sC([0,R])$ for the supremum
  norm.  Thus from any sequence $(u^n)_{n\in \N}$ in $A$ we can
  extract, by $M$ successive extractions, a subsequence
  $(u^{\varphi(n)})_{n \in \N}$ such that every sequence
  \smash{$(u_i^{\varphi(n)})_{n \in \N}$} of $A_i$, $i$ in $\cI$,
  converges in $\sC([0,R])$. Then $(u^{\varphi(n)})_{n \in \N}$
  converges in $X_{\! R}$ and we have proved that $A$ is relatively
  compact in $X_{\! R}$.
\end{enumerate}

We can finally apply the Kre\u{\i}n-Rutman theorem that gives us the
existence and uniqueness of a positive eigenvalue $\alpha^\ep$ and a
\emph{positive} eigenvector $u^\ep$ in $X_{\! R}$ solution to
$\cG_\ep(u^\ep)= \alpha^\ep u^\ep$. Denoting $\frac{u^\ep}{\tau^\ep}$
the $X_{\! R}$ vector with components $\frac{u_i^\ep}{\tau_i^\ep}$,
$i \in \cI$, we got the existence of
\begin{equation}\label{eq:proof_lambda_N}
  \lambda_{\textit{di}}^\ep \coloneqq
  \alpha-\frac{1}{\alpha^\ep} < \alpha, \qquad N^\ep =
  \Bigg( \msum{i \in \cI} \lp \mint_0^R
  \mfrac{\abs{u_i^\ep (x)}}{\tau_i^\ep(x)} \dd{x} \rp
  \Bigg)^{-1} \frac{u^\ep}{\tau^\ep},
\end{equation}
solution in $\R \times X_{\! R}$ to the direct problem
of~\ref{pb:GFv_trunc_reg}. The $\sC^1([0,R])$ continuity of the
$N_i^\ep$, $i$ in $\cI$, is a direct consequence
of~\eqref{eq:proof_reg_dx} and the continuity of $\tau_i^\ep>0$ and
$\gamma_i^\ep$.

\textbf{Adjoint problem.} A function $\phi^\ep$ is solution to the
adjoint equation of~\ref{pb:GFv_trunc_reg} if and only if for all $i$
in $\cI$, $\hat{\phi}_i^\ep \colon x\mapsto \phi_i^\ep(R-x)$ satisfies
for all $x$ in $(0,R)$
\begin{equation}\label{pb:GFv_trunc_reg_ad_reverse}
  \left\{
    \begin{aligned}
      &\hat{\tau}_i^\ep(x) \mpdvx \hat{\phi}_i^\ep(x)
      + \big( \lambda^\ep \! + \! \hat{\gamma}_i^\ep
      (x) \big) \hat{\phi}_i^\ep(x) = 2
      \hat{\gamma}_i^\ep (x) \msum{j \in \cI} \Big(
      \hat{\phi}_j^\ep \big(\mfrac{x}{2} \big) \kappa_{ij}
      \Big)
      +\delta \hat{\phi}_i^\ep(R),\\
      &\hat{\phi}_i^\ep(0)=0, \qquad \msum{j \in \cI} \Big(
      \mint_0^R \!  N_j^\ep(s) \hat{\phi}_j^\ep(R-s)
      \dd{s} \Big) = 1, \qquad \hat{\phi}_i^\ep (x)> 0,
    \end{aligned}
  \right.       
\end{equation}
where $\hat{\tau}_i^\ep \colon x\mapsto \tau_i^\ep(R-x)$ and
$\hat{\gamma}_i^\ep \colon x\mapsto \gamma_i^\ep(R-x)$. This brings us
to a problem similar to the direct problem. By the same method we
obtain the existence of $(\lambda_{\textit{ad}}^\ep, \hat{\phi}^\ep)$
solution to~\eqref{pb:GFv_trunc_reg_ad_reverse} and thus a solution to
the adjoint problem of~\ref{pb:GFv_trunc_reg}.

\textbf{Complete problem.} We have proven the existence of
$(\lambda_{\textit{di}}^\ep,N^\ep)$ and
$(\lambda_{\textit{ad}}^\ep,\phi^\ep)$ solution to the direct and
adjoint equations of~\ref{pb:GFv_trunc_reg}. It remains to check that
$\lambda_{\textit{di}}^\ep=\lambda_{\textit{ad}}^\ep$. This is
straightforward if we integrate the direct equation against the
adjoint eigenvector:
  \begin{equation*}
    \langle \cG_\ep N^\ep,\phi^\ep \rangle
    = \langle N^\ep, {\cG^*_\ep \phi^\ep}
    \rangle,
    \qquad \ie \quad \lambda_{\textit{di}}^\ep
    \langle N^\ep,\phi^\ep \rangle
    = \lambda_{\textit{ad}}^\ep
    \langle N^\ep,\phi^\ep \rangle,
  \end{equation*}
  with compact bracket notation
  $\la f, g \ra \coloneqq \sum_i \lp \int f_i g_i \rp$. We conclude
  thanks to the normalization condition
  $\langle N^\ep,\phi^\ep \rangle =1$.
\end{proof}

\subsection{Proof of
  Lemma~\protect\ref{lemma:unif_bound_eigenels_varep} -- Estimates}
\label{ssec:proof_KR_estimates}

Relying on estimate~\eqref{eq:fix_point_continuity} and on the
expression of $\lambda^\ep$ given by~\eqref{eq:proof_lambda_N}
we derive bounds for the family
$(\lambda^\ep, N^\ep,
\phi^\ep)_{\ep>0}$.
\begin{proof}
  Let us fix $\ep>0$ and take $\alpha = \alpha_{\texp{0}}$, which
  do not contradict~\eqref{eq:condition_k_less_1}.

  \textit{Uniform upper bound for $\lambda_\ep$.}
  From~\eqref{eq:fix_point_continuity} applied to
  $f= u^\ep >0$ we have
  \smash{$\alpha^\ep \leq \frac{1}{(1-k)
      \alpha_{\texp{0}}}$}. The bound then comes directly from the
  expression of $\lambda^\ep$ given
  by~\eqref{eq:proof_lambda_N}:
  \begin{equation*}
    \lambda^\ep \leq k \alpha_{\texp{0}} \leq \alpha_{\texp{0}} =
    \lambda_{up}.
  \end{equation*}
  
  \textit{Null lower bound for $\lambda_\ep$.}
  We assume by contradiction that $\lambda^\ep$ is
  non-positive. Integrating for $i \in \cI$  and 
  $x$ in $\lp 0,R \, \rc$, the direct equation
  of~\ref{pb:GFv_trunc_reg} brings with a change of variables:
  \begin{multline*}
      \msum{i \in \cI} \Big( \tau_i^\ep(x) N_i^\ep
      (x)\Big) -\delta M+ \lambda^\ep \msum{i \in \cI}
      \mint_{0}^x N_i^\ep(s) \dd{s} + \msum{i \in \cI}
      \mint_0^x
      \gamma_i^\ep(s) N_i^\ep(s) \dd{s}\\
      = 2 \msum{i \in \cI} \mint_0^{2x} \gamma_i^\ep (s)
      N_i^\ep (s) \mathds{1}^{\ep}_{\tCinterval{0}{R}}(s)
      \dd{s}.
  \end{multline*}
  Then, $\lambda^\ep$ non-positive would imply:
  $ \forall x \in [ 0, R-\ep]$,
  \begin{equation*}
    \msum{i \in \cI} \Big( \tau_i^\ep(x) N_i^\ep
    (x)\Big) \geq~ \delta M + \mint_0^x \beta^\ep (s)
    \msum{i \in \cI} \Big(
    \tau_i^\ep(s) N_i^\ep(s) \Big) \dd{s}.
  \end{equation*}
  We can apply Gr\"{o}nwall's lemma to
  $x \mapsto \sum_{i \in \cI} \big( \tau_i^\ep(x)
  N_i^\ep (x)\big)$, that is continuous together
  with~$\beta^\ep$, and get:
  $ \forall x \in [ 0, R-\ep]$,
  \begin{equation*}
    \msum{i \in \cI} \Big(
    \tau_i^\ep(x) N_i^\ep (x) \Big) \geq \delta M
    \e^{ \,\eint{0}{x}  \beta^\ep (s)  \dd{s}}.
  \end{equation*}
  However \ref{as:beta_in_inf} and~\ref{as:tau_in_inf} (inherited by
  $\beta^\ep$ and $\tau^\ep$, resp.)  successively
  guarantee: $\forall i \in \cI$
  \begin{itemize}[leftmargin=.7cm, parsep=0cm, itemsep=0.1cm,
    topsep=0cm]
  \item the existence for all $k \in \N$ of $A>0$ such that
    \smash{$\beta^\ep(s) \geq \frac{k}{s}$} for $s \geq A$,
    which implies
    \begin{equation*}
      \msum{i \in \cI} \Big(
      \tau_i^\ep(x) N_i^\ep (x) \Big) \geq \delta M
      e^{ \eint{0}{x} \beta^\ep (s) \dd{s}}
      \geq \delta M \e^{ \eint{A}{x} \frac{k}{s} \dd{s} } = \delta M
      \Big( \mfrac{x}{A}
      \Big)^k, \qquad x \in [A, R],
    \end{equation*}
    
  \item and providing $k$ big enough, the existence of $B>A$
    s.t. $x^{-k} \tau_i^\ep(x) \leq \frac{\delta M}{A^k}$, for
    $x \geq B$.
  \end{itemize}
  Together with the normalization condition of~\ref{pb:GFv_trunc_reg},
  we find:
  \begin{equation*}
    1 = \msum{i \in \cI} \mint_0^R N_i^\ep (x) \dd{x} \geq
    \mfrac{A^k}{\delta M} \msum{i \in \cI} \mint_B^{R- \ep} 
    x^{-k} \tau_i^\ep(x) N_i^\ep (x) \dd{x} \geq
    \mint_B^{R-\ep} \dd{x} =  R - \ep - B,
  \end{equation*}
  which is contradictory as soon as $R$ gets big enough. Thus
  $\lambda^\ep$ is positive for $R$ big enough.
  
  \textit{Uniform bounds for $N^\ep$.} We fix $i$ arbitrarily
  in $\cI$, multiply the direct equation of~\ref{pb:GFv_trunc_reg} by
  \smash{$\e^{-\int_{0}^{x} \scriptscriptstyle{ (\lambda^\ep +
        \gamma_i^\ep)/\tau_i^\ep}}$} and integrate in
  size: $\forall i \in \cI$, $\forall x \in [0,R]$,
  $\forall \ep_{\texp{0}} >0$
  \begin{equation*}
    \begin{aligned}
      \tau_i^\ep (x) N_i^\ep(x) &= \delta
      \e^{-\scaleobj{.75}{\mint_0^x} \frac{\lambda^\ep +
          \gamma_i^\ep(y)}{\tau_i^\ep(y)} \dd{y}} + \,
      4 \int_0^x \e^{-\scaleobj{.75}{\mint_{\scaleobj{1.2}{s}}^x}
        \frac{\lambda^\ep +
          \gamma_i^\ep(y)}{\tau_i^\ep(y)} \dd{y}}
      \msum{j \in \cI} \Big( \gamma_j^\ep(2s)
      N_j^\ep(2s) \kappa_{ji} \Big)
      \mathds{1}^\ep_{\tCinterval{0}{R}}(2s) \dd{s}\\
      &\leq \delta + 2 \max_{i, j}( \kappa_{ji}) \Big(
      \Norm{\beta^\ep}{L^1 ([0, \ep_{\texp{0}}])}
      \Norm{\tau^\ep N^\ep}{\ell^\infty(\cI; L^\infty
        ([0, \ep_{\texp{0}}]))} +
      \Norm{\gamma^\ep}{\ell^\infty(\cI;
        L^\infty(\ep_0,R))} \Big).
    \end{aligned}
  \end{equation*}
  Using~\eqref{eq:prop_convolution_ineq}, the fact that $N^\ep$
  is positive for the lower bound, and that for some positive
  $\ep_{\texp{0}}$,
  $\Norm{\beta}{L^1 ([0, \ep_{\texp{0}}])}$ is less than
  \smash{$(8 \max_{i, j}( \kappa_{ji}))^{-1}$} such that on
  $[0, \ep_{\texp{0}}]$
  \smash{$\tau_i^\ep N_i^\ep$} is less than
  $K \coloneqq 2 (\delta + 2 \max_{i, j}( \kappa_{ji})
  \Norm{\gamma^\ep}{\ell^\infty(\cI;
    L^\infty(\ep_0,R))})$ for the upper bound, we conclude to
  what expected:
  \begin{equation*}
    N_{low}(\delta,\eta)  
    \coloneqq \mfrac{\delta}{\Norm{\tau^\eta}{L^{\! \infty}}} \e^{- \lp
      \frac{R \lambda_{up}}{\mu}+ \tNorm{\beta}{L^1(0, R)} \rp} >0,
    \quad N_{up}(\delta,\eta) \coloneqq \mfrac{\delta
      + \frac{K}{2}+ 2 \max_{i, j}( \kappa_{ji})
      \Norm{\gamma^\ep}{\ell^\infty(\cI;
        L^\infty(\ep_0,R))}}{\mu}.
  \end{equation*}
  As for the $\phi^\ep$, $\ep >0$, we can proceed the
  same way --using the adjoint equation of~\ref{pb:GFv_trunc_reg} to
  derive a $l^\infty\big(\cI; \, L^\infty(0,R) \big)$-bound uniform in
  $\ep$ as well.
\end{proof}

\begin{acknowledgements}
  We thank Marie Doumic for bringing to us this nice problem, for her
  help and corrections.
  A.R. has been partially supported by the ERC Starting Grant SKIPPER
  AD (number 306321).
\end{acknowledgements}

\newpage

\phantomsection
\addcontentsline{toc}{section}{References}
\section*{References}

\bibliographystyle{plain}
\renewcommand{\bibsection}{}

\footnotesize

  
\end{document}